\title{Prime Torsion in the Brauer Group of an Elliptic Curve} 
\author{Charlotte Ure} 
\address{Department of Mathematics, University of Virginia, Charlottesville, VA 22904, U.S.A.\\ ORCiD 0000-0003-4484-956X}
\email{cu9da@virginia.edu}
\subjclass[2020]{Primary 16K50, Secondary 14F22, 14H52}
\newtheorem{thm}{Theorem}[section] 
\crefname{thm}{theorem}{theorems}
\crefname{algo}{algorithm}{algorithms}
\crefname{claim}{claim}{claims}
\crefname{conj}{conjecture}{conjectures}
\newtheorem{cor}[thm]{Corollary}
\crefname{cor}{corollary}{corollaries}
\crefname{defn}{definition}{definitions}
\crefname{exer}{exercise}{exercises}
\crefname{Ex}{example}{examples}
\crefname{lemma}{lemma}{lemmas}
\newtheorem{lem}[thm]{Lemma}
\crefname{lem}{lemma}{lemmas}
\newtheorem{prop}[thm]{Proposition}
\crefname{prop}{proposition}{propositions}
\crefname{prob}{problem}{problems}
\newtheorem{rem}[thm]{Remark}
\crefname{rem}{remark}{remarks}
\crefname{ques}{question}{questions}
\crefname{goal}{goal}{goals}
\DeclareMathOperator{\Div}{Div}
\DeclareMathOperator{\divisor}{div}
\DeclareMathOperator{\inflation}{inf}
\DeclareMathOperator{\res}{res}
\DeclareMathOperator{\Cor}{cor}
\DeclareMathOperator{\Spec}{Spec}
\DeclareMathOperator{\Pic}{Pic}
\DeclareMathOperator{\Gal}{Gal}
\DeclareMathOperator{\Br}{Br}
\def\etale{\'etale\ }
\def\et{\'et}
\def\tor[#1][#2]{\prescript{}{#1}{#2}}
\def\kbar{\overline{k}}
\def\ra{\rightarrow}
\def\ZZ{\mathbb{Z}}
\def\Z{\mathbb{Z}}
\def\Q{\mathbb{Q}}
\DeclareMathOperator{\Prin}{Prin}
\DeclareMathOperator{\e}{e}
\DeclareMathOperator{\chark}{char}
\renewcommand{\H}{H}
\begin{document} 
\begin{abstract}
        We give an algorithm to explicitly determine all elements of the $q$-torsion (for $q$ an odd prime) of the Brauer group of an elliptic curve over any base field of characteristic different from $q$, containing a primitive $q$-th root of unity. These elements of the Brauer group are given as tensor products of symbol algebras over the function field of the elliptic curve. We give sufficient conditions to determine if the Brauer classes that arise are trivial. Using our algorithm, we derive an upper bound on the symbol length of the prime torsion of $\Br(E)/\Br(k)$. 
\end{abstract}
	
\maketitle

\section{Introduction} 
	
The Brauer group of a variety is an important invariant and has applications in both arithmetic and algebraic geometry. For example, this invariant has found applications in algebraic geometry via obstruction to rationality and in arithmetic geometry via obstruction to the existence of rational points. In a famous construction, Artin and Mumford used the Brauer group of the function field of $\mathbb{P}^2$ to give a counterexample to the L\"uroth problem; they constructed a complex unirational $3$-fold which is not rational \cite{artinmumford}. Furthermore, Manin described an obstruction to the Hasse principle for varieties that lies in the Brauer group (the Brauer-Manin obstruction) \cite{Manin}. There has been an ongoing effort to understand this obstruction in more detail for various varieties, see for example \cite{Berg-Varilly-Alvarado}, \cite{Creutz-Viray-Voloch}, and \cite{Skorobogatov:Beyond-the-Manin-Obstruction}. \\

The goal of the present paper is to gain an explicit description of the Brauer group of an elliptic curve $E$. It will be sufficient to examine $d$-torsion subgroups of a Brauer group as it is a torsion abelian group. Fix an integer $d\geq 2$, coprime to the characteristic of $k$, and suppose that $k$ contains a primitive $d$-th root of unity $\rho$. We want to explicitly describe the $d$-torsion of the Brauer group $\tor[d][\Br(E)]$. By absolute purity \cite{colliot-thelene-sansuc:therationalityproblem,Gabber:ANoteOnTheUnramifiedBrauerGroupAndPurity}, the Brauer group of $E$ is naturally isomorphic to the unramified Brauer group of $k(E)$, the function field of $E$. In particular, this means that elements in $\tor[d][\Br(E)]$ can be represented by central simple algebras over $k(E)$. A favorite set of generators of the Brauer group are symbol (or cyclic) algebras. These are $k(E)$-algebras with two generators $x$ and $y$ subject to the relations
$x^d= a, y^d = b,$ and $xy=\rho yx$ for some $a,b \in k(E)^\times$.  
The Merkurjev-Suslin theorem \cite{merkurjev-suslin} implies that every element in $\tor[d][\Br(E)]$ can be described by a tensor product of these symbol algebras over $k(E)$. We aim to describe the elements of $\tor[d][\Br(E)]$ in terms of these products.  \\

The subgroup $\tor[2][\Br(E)]$ was described in \cite{chernousov-guletskii:2-torsion}, \cite{Guletskii1997}, \cite{GuletskiiV.Yanchevskii1998}, and \cite{Pumplun1998}, where the authors compute its elements, and determine when such a description is trivial, using quaternion algebras over any base field of characteristic not equal to two. In a vast generalization, using a combination of techniques from the 2-torsion case as well as the work of Skorobogatov giving an abstract description, we calculate the elements of $\tor[d][\Br(E)]$, and conditions for when such an element is trivial, in the following cases:
\begin{enumerate} 
	\item[(A)] any $d$: When $\chark k$ is coprime to $d$, $k$ contains a primitive $d$-th root of unity, and the $d$-torsion of $E(\kbar)$ is $k$-rational (see \Cref{Prop:main result in split case}). 
	\item[(B)] $d=q$, an odd prime: When $\chark k$ is coprime to $q$ and $k$ contains a primitive $q$-th root of unity (see \Cref{sec:Algorithm}). 
\end{enumerate} 
The description of elements of $\tor[d][\Br(E)]$ in (A) was previously known (see for instance \cite[Chapter 4]{skorobogatov} and \cite[Remark 6.3]{Chernousov2016}). We include the computation in this paper for completeness. Generalizing the description of $\tor[2][\Br(E)]$, the two-torsion of the Brauer group of a hyperelliptic curve was given in \cite{Creutz-Viray:TwoTorsionintheBrauerGroupofaHyperellipticCurve}. Finally, elements in the relative Brauer group of a curve $X$ of genus $1$ over $k$ were described in \cite{Ciperiani-Krashen:RelativeBrauerGroupsofGenus1Curves} and \cite{Han:RelativeBrauerGroupsOfFunctionFieldsOfCurvesofGenusone}. This relative Brauer group is the kernel of the map $\Br k \rightarrow \Br k(X)$. In the case that $X=E$ is an elliptic curve, the relative Brauer group is trivial since $\Br(k) \hookrightarrow\Br(E) \hookrightarrow\Br (k(E)).$  \\

Consider the Hochschild-Serre spectral sequence $$\H^i\left( k, \H^j \left(\overline{E}, \mathbb{G}_m \right) \right) \Rightarrow \H^{i+j}\left(E, \mathbb{G}_m \right).$$
The $d$-torsion of its sequence of low degree terms gives the short exact sequence
\begin{align}\label{Eq:exact sequence on tor[d]Br(E)} 
\xymatrix{0 \ar[r]& \tor[d][\Br(k)] \ar[r]^i & \tor[d][\Br(E)] \ar[r]^(.4)r & \tor[d][\H^1\left(k, E\left(\kbar\right)\right)]  \ar[r] & 0 },
\end{align}
where the first map sends the class of a central simple algebra $A$ to the class of $A \otimes k(E)$ under the identification $\Br E \subset \Br k(E)$. For more details on this sequence, see also \cite{Faddeev:SimpleAlgebrasOverAFieldOfAglebraicFunctionsOfOneVariable} and \cite{Lichtenbaum1969}. We will discuss the second map in more detail in \Cref{Mkrational}.
Denote the zero-section of $E$ by $s: \Spec(k) \ra E$. Sequence (\ref{Eq:exact sequence on tor[d]Br(E)}) is split on the left by the induced specialization $s^*: \Br(E) \ra \Br(k)$.
Thus we get the decomposition $$\tor[d][\Br(E)] = \tor[d][\Br(k)] \oplus \tor[d][\H^1\left(k, E\left(\kbar\right)\right)].$$ In order to calculate $\tor[d][\Br(E)]$ it is sufficient to describe a splitting to $r$ in sequence (\ref{Eq:exact sequence on tor[d]Br(E)}). To that end, consider the Kummer sequence 
\begin{align*}
\xymatrix{0 \ar[r]& M \ar[r]& E(\kbar) \ar[r]^{[d]}& E(\kbar) \ar[r] & 0 },
\end{align*} 
where $[d]$ denotes multiplication by $d$ on the elliptic curve and $M$ is the full $d$-torsion of $E$. This induces a short exact sequence on group cohomohology
\begin{align}\label{eq:Kummer induces on cohomology} 
\xymatrix{ 0 \ar[r] & E(k)/[d]E(k) \ar[r]^\delta & \H^1(k,M) \ar[r]^(0.45){\lambda} & \tor[d][\H^1\left(k,E(\kbar)\right)] \ar[r]& 0 }.
\end{align} 
We want to relate the sequences (\ref{Eq:exact sequence on tor[d]Br(E)}) and (\ref{eq:Kummer induces on cohomology}). 
Define $\epsilon: \H^1(k,M) \ra \tor[d][\Br(E)]$ as the following composition 
\begin{equation}\label{Eq:epsilon}
\mbox{\large{$\epsilon$ :}}\left\{ 
\begin{gathered}
\xymatrix{
	\H^1(k,M) \ar[r]^(.35)\sim & \H^1_{\text{\et}}\left(\Spec(k),M\right) \ar[r]^(.6){p^*}& \H^1_{\text{\et}}(E,M)}\\
\xymatrix{\ar[r]^(.2){- \cup [\mathcal{T}]}& \H^2_{\text{\et}}(E,M \otimes M) \ar[r]^(.55)\e& \H^2_{\text{\et}}\left(E, \mu_d \right)  \ar[r]& \tor[d][\Br(E)] },
\end{gathered}\right.
\end{equation} 
where 
\begin{itemize} 
	\item $p: E \ra \Spec k$ is the structure map,
	\item $\mathcal{T}$ is the torsor given by multiplication by $d$ on $E$, and 
	\item $e$ is the map induced by the Weil-pairing. 
\end{itemize} In \cite[Chapter 4]{skorobogatov}, the author proves, using abstract methods, that $\epsilon$ induces a splitting of squence (\ref{Eq:exact sequence on tor[d]Br(E)}). We reprove this result using explicit methods and compute $\epsilon$ in terms of group cohomology. The main obstacle to this explicit description is the \etale cup-product with the torsor $\mathcal{T}$. There is an ongoing effort to calculate these products explicitly (see for example \cite{Aldrovandi-Ramachandra:Cup-products-the-Heisenberg-group-and-codimension-two-algebraic-cycles} and \cite{Poonen-Rains:SelCupProducts}). In this paper, we overcome this difficulty by calculating the cup product at the generic point and using results from Galois cohomology. We use our explicit description of $\epsilon$ to give a complete description of $\tor[d][\Br(E)]$ under the assumption that $M$ is $k$-rational. 
\begin{thm}\label{Prop:main result in split case} 
	Suppose that the $d$-torsion $M$ of $E(\kbar)$ is $k$-rational. Fix two generators $P$ and $Q$ of $M$. Denote by $0$ the identity of the group law on $E$. Let $t_P,t_Q \in k(E)$ with divisors  $\divisor(t_P) = d(P) - d(0)$ and $\divisor(t_Q) = d(Q)- d(0)$ so that $t_P\circ [d], t_Q \circ [d] \in (k(E)^\times)^d$. Then the $d$-torsion of $\Br(E)$ decomposes as 
	$$\tor[d][\Br(E)] =\tor[d][\Br(k)] \oplus I$$ and every element in $I$ can be represented as a tensor product 
	$$\left( a, t_P \right)_{d,k(E)} \otimes \left( b, t_Q \right)_{d,k(E)}$$ with $a,b \in k^\times$. Such a tensor product is trivial if and only if one of the following equations hold in $k^\times/(k^\times)^d$:  
	\begin{itemize} 
	    \item $a \equiv 1$ and $b\equiv 1,$
	    \item $a \equiv t_Q(P)$ and $b \equiv \frac{t_P(P+Q)}{t_P(Q)}$, 
		\item $a \equiv \frac{ t_Q(P+Q)}{t_Q(P)}$ and $b=t_P(Q)$, or 
		\item $a \equiv t_Q (R)$ and $b\equiv t_P(R)$ for some $R\in E(k), R \neq 0,P,Q$.\end{itemize}  
\end{thm}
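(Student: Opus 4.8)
The plan is to make the section $\epsilon$ of sequence (\ref{Eq:exact sequence on tor[d]Br(E)}) completely explicit at the generic point of $E$ and then to read off the generators and relations from the Kummer sequence (\ref{eq:Kummer induces on cohomology}). First I would record the purely formal consequences of $\epsilon$ being a section. Since $p\circ s=\mathrm{id}$, the zero-section splits (\ref{Eq:exact sequence on tor[d]Br(E)}) on the left, so $\tor[d][\Br(E)]=\tor[d][\Br(k)]\oplus I$ with $I:=\ker\bigl(s^{*}\colon\tor[d][\Br(E)]\to\tor[d][\Br(k)]\bigr)$, and $r$ restricts to an isomorphism $I\xrightarrow{\sim}\tor[d][\H^1(k,E(\kbar))]$. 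Because $s^{*}[\mathcal T]=\delta(0)=0$ we have $s^{*}\circ\epsilon=0$, hence $\Image(\epsilon)\subseteq I$; combined with $r\circ\epsilon=\lambda$ (the assertion, proved by abstract means in \cite[Chapter 4]{skorobogatov}, that $\epsilon$ is a section) and with the surjectivity of $\lambda$ in (\ref{eq:Kummer induces on cohomology}), this shows that $\epsilon\colon\H^1(k,M)\to I$ is surjective with kernel exactly $\Image(\delta)=\delta\bigl(E(k)/[d]E(k)\bigr)$. In particular every class of $I$ has the form $\epsilon(\xi)$, and $\epsilon(\xi)=\epsilon(\xi')$ if and only if $\xi-\xi'\in\Image(\delta)$.

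Next I would compute $\epsilon$ explicitly. Since $M$ is $k$-rational, the basis $(P,Q)$ with $e_{d}(P,Q)=\rho$ identifies $M\cong\mu_{d}\times\mu_{d}$ and hence $\H^1(k,M)\cong\bigl(k^{\times}/(k^{\times})^{d}\bigr)^{2}$ by Kummer theory; write $\xi\leftrightarrow(a,b)$. The key step is at $\Spec k(E)$: the divisor conditions $\divisor(t_{P})=d(P)-d(0)$, $\divisor(t_{Q})=d(Q)-d(0)$ together with $t_{P}\circ[d],\,t_{Q}\circ[d]\in(k(E)^{\times})^{d}$ force the restriction of $[\mathcal T]$ to $\H^1(k(E),M)$ to be $(t_{P},t_{Q})$ under the above identification; this is exactly the defining formula of the Weil pairing $e_{d}(S,T)=g_{T}(X+S)/g_{T}(X)$ with $g_{T}^{d}=f_{T}\circ[d]$, applied to $f_{P}=t_{P}$, $f_{Q}=t_{Q}$. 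Plugging this into (\ref{Eq:epsilon}), using that the cup product $\H^1(k(E),\mu_{d})\times\H^1(k(E),\mu_{d})\to\H^2(k(E),\mu_{d})$ is the symbol algebra $(\,\cdot\,,\,\cdot\,)_{d,k(E)}$, and tracking $\e$ (the would-be cross terms coming from $P\otimes P$ and $Q\otimes Q$ vanish because $e_{d}(P,P)=e_{d}(Q,Q)=1$), one obtains $\epsilon(\xi)=(a,t_{P})_{d,k(E)}\otimes(b,t_{Q})_{d,k(E)}$, up to the harmless conventions chosen for $\rho$, the Kummer isomorphism, and the ordering of the basis. Together with the first paragraph this gives the stated decomposition and the fact that every element of $I$ is such a tensor product, with triviality governed by membership of $(a,b)$ in $\Image(\delta)$.

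Finally I would identify $\Image(\delta)$ with the three listed families. Since $\delta$ is a homomorphism, $\Image(\delta)$ is the set $\{\delta(R):R\in E(k)\}$, and since $E(k)$ is generated by $\{P,Q\}$ together with all $R\in E(k)\setminus\{0,P,Q\}$, it suffices to compute $\delta$ on these. For $R\neq 0,P,Q$ the functions $t_{P},t_{Q}$ have neither zero nor pole at $R$, and the same cocycle computation as above gives $\delta(R)=\bigl(t_{Q}(R),t_{P}(R)\bigr)$, which yields relation $(3)$. For the two torsion points the naive evaluation degenerates, since $t_{P}$ vanishes at $P$; to regularize I would prove, by Weil reciprocity applied to $t_{P}$ and a function with divisor $(Y+P)-(Y)-(Y'+P)+(Y')$, that the class of $t_{P}(Y+P)/t_{P}(Y)$ in $k^{\times}/(k^{\times})^{d}$ is independent of the auxiliary point $Y\in E(k)$ and computes the relevant component of $\delta(P)$; evaluating at $Y=Q$ produces the factor $t_{P}(P+Q)/t_{P}(Q)$, and together with the other component $t_{Q}(P)$ this is relation $(1)$, and symmetrically relation $(2)$ for $\delta(Q)$. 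Assembling these three computations completes the proof.

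I expect the main obstacle to be the explicit cup-product computation of $\epsilon$ at the generic point: one must pin down $[\mathcal T]\in\H^1(k(E),M)$ in Kummer coordinates, keep the Weil-pairing normalizations consistent with the identification $\H^1(k,M)\cong\bigl(k^{\times}/(k^{\times})^{d}\bigr)^{2}$, and push everything through $\e$, using symbol identities (bilinearity, $(x,y)_{d}=(y,x)_{d}^{-1}$, and Steinberg relations) to bring the answer into the shape asserted in the theorem; reconciling the descent value $\delta(R)$ for $R\neq 0,P,Q$ with the single-symbol presentation $(t_{Q}(R),t_{P}(R))_{d,k(E)}$ of relation $(3)$, and the Weil-reciprocity argument regularizing $\delta(P)$ and $\delta(Q)$, are the remaining delicate points.
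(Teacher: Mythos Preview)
Your plan is essentially the paper's own argument: identify $\eta^{*}[\mathcal T]$ in Kummer coordinates via the Weil pairing, push the cup product through $\e$ to obtain $\epsilon\circ\phi(a,b)=(a,t_P)_{d,k(E)}\otimes(b,t_Q)_{d,k(E)}$, and then read off the relations as $\epsilon\bigl(\Image(\delta)\bigr)$. Two points of divergence are worth flagging. First, the paper does not merely cite \cite[Chapter~4]{skorobogatov} for $r\circ\epsilon=\lambda$ and $\epsilon(\ker\lambda)=0$; it reproves both directly, the former by an explicit cocycle chase through the divisor sequences defining $r$, the latter by your own observation that $s^{*}[\mathcal T]$ is trivial (so your argument that $\Image(\epsilon)\subseteq I$ is exactly the paper's, applied more generally than the paper states it). Second, your Weil-reciprocity regularization for $\delta(P)$ and $\delta(Q)$ is heavier than needed: since $\delta$ is a homomorphism and $t_P,t_Q$ are both defined and nonzero at $P\oplus Q$, the paper simply writes the degenerate component of $\delta(P)$ as the corresponding component of $\delta(P\oplus Q)-\delta(Q)$, which immediately gives $t_P(P\oplus Q)/t_P(Q)$ and symmetrically for $\delta(Q)$. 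Finally, your worry about ``reconciling'' $\delta(R)=(t_Q(R),t_P(R))$ with the single symbol in the third bullet is misplaced: applying $\epsilon$ to $(t_Q(R),t_P(R))\in (k^{\times}/(k^{\times})^{d})^{2}$ yields the tensor product $(t_Q(R),t_P)_{d,k(E)}\otimes(t_P(R),t_Q)_{d,k(E)}$, exactly as in the algorithm of \cref{sec:Algorithm}, and no further symbol manipulation is required.
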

In addition to recovering the elements of $\tor[d][\Br(E)]$ as calculated in \cite[Remark 6.3]{Chernousov2016}, we give sufficient and necessary conditions for those elements to be trivial. For the proof of \Cref{Prop:main result in split case}, see \Cref{Mkrational}.
\\

The main result of this paper is the algorithm given in \Cref{sec:Algorithm}. Let $q$ be an odd prime coprime to the characteristic of $k$. The algorithm can be used to determine the $q$-torsion of $\Br(E)$ over an arbitrary field $k$ containing a primitive $q$-th root of unity. This algorithm is proved in \Cref{Mnotkrational}. The main idea of the proof is the following: Let $L$ be the smallest Galois extension of $k$ so that the $q$-torsion $M$ of $E(\kbar)$ is $L$-rational. Use \Cref{Prop:main result in split case} to describe the Brauer group of $E \times_{\Spec k} \Spec L$. Note that the order of $L$ over $k$ divides the order of $SL_2\left( \mathbb{F}_q \right)$, which is $q (q-1) (q+1)$. If $q$ does not divide the order of $L$ over $k$, we use the fact that restriction followed by corestriction is an isomorphism to determine $\Br(E)$. If the order of $L$ over $k$ equals $q$, we apply the inflation restriction exact sequence to our problem. Finally, if $q$ divides the order of $\Br(E)$, we combine the two previous cases. \\

A direct consequence of the algorithm is the following result on the symbol length. 

\begin{cor}Let $q$ be an odd prime coprime to the characteristic of $k$ and assume that $k$ contains a primitive $q$-th root of unity. For any elliptic curve $E$ over $k$, the $q$-torsion of the Brauer group decomposes as $\tor[q][\Br(E)] = \tor[q][\Br(k)] \oplus I$ and every element in $I$ can be written as a tensor product of at most $n_q = 2(q-1)(q+1)$ symbol algebras over $k(E)$. This means that the symbol length in $\tor[q][\Br(E)]/\tor[q][\Br(k)]$ is at most $n_q$.
\end{cor}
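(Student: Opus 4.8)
The plan is to track the symbol length through the three cases of the algorithm of \cref{sec:Algorithm}, as sketched in the introduction, and bound the worst case. Let $L/k$ be the smallest Galois extension over which the $q$-torsion $M$ of $E$ is rational, set $E_L = E \times_{\Spec k} \Spec L$, and recall that $[L:k]$ divides $|SL_2(\mathbb{F}_q)| = q(q-1)(q+1)$. By \cref{Prop:main result in split case}, every element of $I_L \subseteq \tor[q][\Br(E_L)]$ is a tensor product of at most two symbol algebras over $L(E)$, each of the form $(a, t_P)_{q,L(E)}$ or $(b, t_Q)_{q,L(E)}$ with $a,b \in L^\times$. The idea is to push such an element down to $k(E)$ using corestriction (when $q \nmid [L:k]$) or an explicit splitting coming from the inflation-restriction sequence (when $[L:k] = q$), and in each case count how many symbols the resulting class requires.

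First I would treat the case $q \nmid [L:k]$. Here restriction $\tor[q][\Br(E)] \to \tor[q][\Br(E_L)]$ followed by corestriction is multiplication by $[L:k]$, which is invertible on the $q$-torsion; so every class in $I$ is, up to the invertible scalar $[L:k]$, the corestriction of a class in $I_L$. A corestriction of a symbol algebra from $L(E)$ to $k(E)$ is not in general a single symbol, but by projection formula type arguments together with the fact that $L(E)/k(E)$ has degree $[L:k] \le (q-1)(q+1)$ (using $q\nmid[L:k]$), one bounds the number of symbols needed for $\Cor_{L(E)/k(E)}$ of a single symbol by $[L:k]$; since each element of $I_L$ uses at most two symbols, each element of $I$ uses at most $2[L:k] \le 2(q-1)(q+1) = n_q$ symbols. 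This is the key quantitative step and I expect the main obstacle to be making the corestriction-of-a-symbol bound precise and uniform — one wants to invoke a clean estimate (e.g. via a basis of $L/k$ and the formula for corestriction of cup products) rather than grind through it, and care is needed that the scalars $a,b$ lie in $L$ so that only the constant field is being corestricted.

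Next I would handle $[L:k] = q$. Here $M$ is not rational over $k$ but becomes rational over a degree-$q$ cyclic extension; applying the inflation-restriction exact sequence to $\tor[q][\Br(E)] \to \tor[q][\Br(E_L)]$ and using the description of $I_L$ from \cref{Prop:main result in split case}, one writes each class in $I$ explicitly as a product of symbol algebras over $k(E)$ — this is exactly what the algorithm of \cref{sec:Algorithm} produces, and I would simply read off the number of symbols it outputs in this branch, which is at most $n_q$ (in fact far fewer). Finally, for the general case where $q \mid [L:k]$, factor $L/k$ through the subextension $L_0/k$ of degree prime to $q$ and the degree-$q$ piece $L/L_0$: apply the first case to descend from $E_L$ to $E_{L_0}$ (contributing a factor bounded by $[L_0:k] \le (q-1)(q+1)$, since the degree-$q$ part has been split off) and the second case to descend from $E_{L_0}$ to $E$. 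Composing the two bounds, each element of $I$ is a product of at most $2(q-1)(q+1) = n_q$ symbols, which is the claimed bound; the statement about symbol length in $\tor[q][\Br(E)]/\tor[q][\Br(k)] \cong I$ is then immediate. The only real work is bookkeeping the multiplicativity of the symbol count across the tower $k \subseteq L_0 \subseteq L$, ensuring the bounds multiply rather than add and that the prime-to-$q$ scalar from the corestriction step does not cost extra symbols.
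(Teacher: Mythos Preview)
Your proposal is essentially correct and matches the paper's approach; the paper does not prove this corollary in detail but declares it a ``direct consequence'' of the algorithm, the content being exactly what you outline. Over the intermediate field $L'$ (your $L_0$) with $[L':k]\mid (q-1)(q+1)$ the algorithm exhibits every element of $I_{L'}$ as a product of at most two symbol algebras over $L'(E)$, and then the Rosset--Tate bound (invoked explicitly in \cref{subsec:Ex degree L/k coprime to 3} via \cite{Rosset1983}) gives that the corestriction of a single symbol from $L'(E)$ to $k(E)$ is a product of at most $[L':k]$ symbols, yielding $2(q-1)(q+1)$ in total.

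One small slip: in your final paragraph the two cases are transposed. With $L_0=L'$, the extension $L/L_0$ has degree $q$ and is handled by the inflation--restriction analysis (your ``second case''), already producing at most two symbols over $L_0(E)$; the extension $L_0/k$, of degree dividing $(q-1)(q+1)$, is then handled by corestriction (your ``first case''). In particular there is no multiplicativity across a tower to worry about: you corestrict only once, from $L_0(E)$ to $k(E)$, and the factor of $2$ comes from the number of symbols over $L_0(E)$, not from a second corestriction step.
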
 

Finally, we give multiple examples in \Cref{ch:Examples}. For additional examples, see also \cite{dissertation}. \\

\noindent\textbf{Notation.} 
Throughout this paper, $k$ denotes a base field and $E$ is an elliptic curve over $k$. We denote the addition on $E$ by $\oplus$, the subtraction by $\ominus$, and the point at infinity by $0$. Let $d\geq 2$ be an integer coprime to the characteristic of $k$ so that $k$ contains a primitive $d$-th root of unity. $M$ denotes the $d$-torsion of $E(\kbar)$, and $[d]$ the multiplication by $d$-map on $E$. $P$ and $Q$ will always be generators of $M$ and $\e(.,.)$ the Weil-pairing on $E$ with $\e(P,Q) = \rho$. Fix an isomorphism $[.]_\rho: \mu_{d} \rightarrow \Z/d\Z$ with $\left[\rho^i\right]_\rho = i $. Furthermore, identify $\Z/d\Z$ with the subset $\{0, \ldots, d-1\}$ of the integers and denote the image of $\rho^i$ under the composition by $\left[\rho^i\right]_\rho^\Z= i \in \Z$. When we assume that $d$ is an odd prime, we write $d=q$. 

For a field $F$, denote an algebraic closure by $\overline{F}$ and its absolute Galois group by $G_F$. Furthermore, $H^i(F,A) = H^i(G_F,A)$ is the $i$-th (profinite) group cohomology group of $G_F$ with coefficients in a $G_F$-module $A$. $\res, \Cor,$ and $\inflation$ denote the restriction, the corestriction, and the inflation map, respectively. \\

\noindent\textbf{Acknowledgements.}
I would like to thank my advisor Rajesh Kulkarni, for his expertise and support throughout the process of writing this paper. I also thank Igor Rapinchuk for helpful conversations and Alexei N. Skorobogatov for a useful correspondence. I thank David Saltman and Adam Chapman for comments on an earlier draft of this article. Finally, I would like to thank Mateo Attanasio, Caroline Choi, and Andrei Mandelshtam for comments on a previous version. I also thank the anonymous referee for their thoughtful comments and suggestions. 

\section{The Algorithm}\label{sec:Algorithm}
Let $q$ be an odd prime and let $k$ be a field of characteristic different from $q$ that contains a primitive $q$-th root of unity $\rho$. Let $E$ be an elliptic curve over $k$. Denote by $M$ the $q$ torsion of $E(\kbar)$ with generators $P$ and $Q$ so that $\e(P,Q)=\rho$. The Brauer group of $E$ decomposes as 
$$\tor[q][\Br(E)] = \tor[q][\Br(k)] \oplus I.$$
Below, we describe an algorithm to find the elements in $I$ and give sufficient conditions for when they are trivial: 
First, determine the kernel of the natural Galois representation 
$$\Psi: G_k \rightarrow GL_2\left(\mathbb{F}_q\right).$$
Since $k$ contains a primitive $q$-th root of unity, the image of $\Psi$ lies in $SL_2(\mathbb{F}_q)$ (for more details see \Cref{Mnotkrational}). Denote by $L$ the fixed field of this kernel and fix some elements $t_P,t_Q \in L(E)$ with $\divisor(t_P) = q(P) - q(0)$ and $\divisor(t_Q) = q(Q) - q(0)$ so that $t_P\circ [q], t_Q \circ [q] \in (k(E)^\times)^q$.. \\

Suppose first that $q$ does not divide the order of $L/k$. Then every element in $I$ can expressed as 
$$\Cor_{L(E)/k(E)} \left( a, t_P\right)_{L(E)} \otimes \Cor_{L(E)/k(E)}\left( b, t_Q\right)_{L(E)}$$
for some $a,b \in L^\times$. The corestriction may be computed explicitly using an algorithm by Rosset and Tate \cite{Rosset1983}. Furthermore, such a tensor product is trivial if one of the following conditions hold in $L^\times/(L^\times)^q \times L^\times/(L^\times)^q$:
\begin{equation}\label{eq:rel}
\begin{cases} 
            a \equiv 1 \text{ and } b\equiv 1,\\
            a \equiv t_Q(P) \text{ and }b \equiv \frac{t_P(P+Q)}{t_P(Q)},\\
		    a \equiv \frac{ t_Q(P+Q)}{t_Q(P)} \text{ and } b=t_P(Q), \text{ or} \\
		    a \equiv t_Q (R) \text{ and } b\equiv t_P(R) \text{ for some }R\in E(k), R \neq 0,P,Q.
\end{cases}
\end{equation} 

On the other hand, if $q$ divides the order of $L/k$, fix some intermediate field $L'$ so that $L/L'$ is a Galois extension of degree $q$. We may choose $P$ and $Q$ so that $\Gal(L/L')$ is generated by $\overline{\sigma}$ with $\sigma(P) = P$ and $\sigma(Q) = P\oplus Q$. Fix $n_Q \in L'(E)$ with $\divisor(n_Q) = \sum_{i=0}^{q-1} \sigma^i(Q) = \sum_{i=0}^{q-1} (iP+Q).$ Then every element in $I$ can be expressed as 
$$\Cor_{L'(E)/k(E)}\left( \left(l^q\right)^i, n_Q \right)_{L'(E)} \otimes \Cor_{L'(E)/k(E)}(a,t_P)_{L'(E)}$$ for some $a \in L'^\times$ and $0 \leq i <q.$ 
Furthermore, an element as above is trivial if 
\begin{enumerate}
    \item the pair $(a,1)$ satisfies one of the conditions in (\ref{eq:rel}) in $L^\times/(L^\times)^q \times L^\times/(L^\times)^q$ and
    \item $i=0$ or the quotient $\frac{E(k) \cap [q]E(L)}{[q]E(k)}$ is not trivial. 
\end{enumerate}
Remark that an element in $I$ may be trivial even if it does not occur in the above list as additional relations may arise from the fact that the corestriction map is not injective. These relations require a more careful treatment, for an example see \Cref{subsec:Ex degree L/k coprime to 3}. 

\section{Background}\label{ch:background}
Let $E$ be an elliptic curve defined over a field $k$. If $k$ is of characteristic different from $2$ and $3$, then $E$ may be described by the affine equation
$$y^2 = x^3 + Ax + B$$
with $A,B \in k$. Denote the point addition on $E(\kbar)$ by $\oplus$, the point subtraction by $\ominus$, and the point at infinity by $0$. Furthermore, for any natural number $d$, we denote by $[d]$ the multiplication by $d$ map on $E(\kbar)$. \\

We now proceed to describe the correspondence between torsors and elements in the first cohomology group. For more details we refer the reader to \cite{skorobogatov}. Let $A$ be an algebraic group defined over a field $F$. An $F$-torsor under $A$ is a non-empty $F$-variety $T$ equipped with a right-action of $A$ so that $T(\overline{F})$ is a principal homogeneous space under $A(\overline{F})$. There is a bijection  
$$ H^1(F,A) \leftrightarrow \left\{ \begin{gathered} \text{$F$-torsors under $A$}\\\text{ up to isomorphism }\end{gathered} \right\}$$
that is explicitly given as follows: Let $T$ be an $F$-torsor under $A$. Choose an $\overline{F}$-point $x_0$ of $T$. By the definition of $F$-torsor, for any $\sigma \in G_F$, there exists a unique $a_\sigma \in A(\overline{F})$ so that 
$\sigma(x_0) = x_0. a_\sigma$. The map $\sigma \mapsto a_\sigma$ defines the cocycle in $H^1(F,A)$ corresponding to $T$. 
Now consider the more general case, where $X$ is an abelian variety over a field $k$. An $X$-torsor under an $X$-group scheme $\mathcal{A}$ is a scheme $\mathcal{T}$ over $X$ together with an $\mathcal{A}$-action compatible with the projection to $X$ that is \'etale-locally trivial. As before, there is a one-to-one correspondence between $X$-torsors under $\mathcal{A}$ and elements of the \etale cohomology $H^1_{\text{\et}}(X,\mathcal{A})$.  
A $d$-covering of an abelian variety $X$ is a pair $(\mathcal{T},\psi)$, where $\mathcal{T}$ is a $k$-torsor under $X$ and $\psi: \mathcal{T} \ra X$ is a morphism such that $\psi(x.t) = d x + \psi(t)$ for any $t \in \mathcal{T}(\kbar), x \in X(\kbar).$ By \cite[Proposition 3.3.4 (a)]{skorobogatov}, any $d$-covering is an $X$-torsor under the $d$-torsion $\tor[d][X]$.\\

In the following, we describe the  correspondence between the Brauer group and the second cohomology group. Let $F$ be a field and let $d\geq 2$ be an integer. We say that two central simple algebras $A$ and $B$ are Morita equivalent if there exist some natural numbers $n$ and $m$ so that $A \otimes M_n(F)$ and $B \otimes M_m(F)$ are isomorphic as $F$-algebras. Elements in the Brauer group are given by equivalence classes of central simple algebras modulo Morita equivalence and the group structure is given by the tensor product. There is a group isomorphism between $\Br(F)$ and $H^2(F, \overline{F}^\times)$ that can be described as follows: Let $K/F$ be a finite Galois extension with Galois group $G$ and let $f$ be a cocycle representing an element in $H^2(G,K^\times)$. Consider the $F$-vector space 
$A = F \left<x_g : g \in G \right>$ with multiplication $\lambda x_g = x_g g(a)$ and $x_g x_h = f(g,h) x_{gh}$. This turns $A$ into a finite dimensional central simple algebra over $F$. 

From now on suppose that the field $F$ contains a primitive $d$-th root of unity $\rho$. Fix an isomorphism $[.]_\rho: \mu_{d} \rightarrow \Z/d\Z$ with $\left[\rho^i\right]_\rho = i $. Furthermore, identify $\Z/d\Z$ with the subset $\{0, \ldots, d-1\}$ of the integers and denote the image of $\rho^i$ under the composition by $\left[\rho^i\right]_\rho^\Z= i \in \Z$. For $a,b \in F^\times$, the symbol algebra 
	\begin{equation}\label{defn:symbolalgebra} 
	(a,b)_{d,F}= (a,b)_{d,F,\rho} := F \left< x,y : x^d = a , y^d = b, xy = \rho yx \right>
\end{equation} 
is a central simple algebra over $F$. The element in $H^2(F,\overline{F}^\times)$ corresponding to $(a,b)_{d,F,\rho}$ can be represented by the cocycle
\begin{equation}\label{eqn:cocyclesymbolalgebra}  
	(\gamma, \tau) \mapsto \begin{cases} a & \text{if }\left[ \frac{\gamma\left( \sqrt[d]{b} \right)}{\sqrt[d]{b}}\right]_\rho^\Z + \left[ \frac{\tau\left( \sqrt[d]{b} \right)}{\sqrt[d]{b}}\right]_\rho^\Z \geq d \\ 1 & \text{else} \end{cases}.
\end{equation} 
For more details we refer the reader to \cite[Chapter 7 \S 29]{Reiner:Maximal-orders}. The following cocycle representing the symbol algebra $(a,b)_{d,F}$ will prove more useful for our purposes.

\begin{prop}\label{prop:sumbolalgebra-cocycle} 
	Let $M$ be the $d$-torsion of an elliptic curve $E$ over $k$ with generators $P$ and $Q$. Assume that the Weil-pairing satisfies $\e(P,Q)=\rho$. Let $F$ be a field containing a primitive $d$-th root of unity $\rho$. The symbol algebra $(a,b)_{d,F}$ can be represented by the cocycle 
	$$(\gamma,\tau) \mapsto \e \left(\left[\frac{\gamma\left( \sqrt[d]{a} \right)}{\left( \sqrt[d]{a} \right)}\right]_\rho P, \left[\frac{\tau\left( \sqrt[d]{b} \right)}{\left( \sqrt[d]{b} \right)}\right]_\rho Q \right)^{-1}$$
	for every pair $a,b \in F^\times$. 
	We will often consider the case $F = k(E)$. 
\end{prop}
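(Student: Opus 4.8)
The plan is to verify that the displayed formula defines a $2$-cocycle on $G_F$ cohomologous to the cocycle \eqref{eqn:cocyclesymbolalgebra}, which is already known to represent $(a,b)_{d,F}$ in $H^2(F,\overline{F}^\times)\cong\Br(F)$. I would do this by writing down a single $1$-cochain $h\colon G_F\to\overline{F}^\times$ whose coboundary carries \eqref{eqn:cocyclesymbolalgebra} to the Weil-pairing expression; producing $h$ is the whole content, and it shows at the same time that the Weil-pairing expression is a cocycle (since \eqref{eqn:cocyclesymbolalgebra} is).

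Fix $d$-th roots $\sqrt[d]{a},\sqrt[d]{b}\in\overline{F}$ (the same ones used in \eqref{eqn:cocyclesymbolalgebra}), and for $\gamma\in G_F$ write $m_\gamma:=\left[\frac{\gamma\left(\sqrt[d]{a}\right)}{\sqrt[d]{a}}\right]_\rho^\Z$ and $n_\gamma:=\left[\frac{\gamma\left(\sqrt[d]{b}\right)}{\sqrt[d]{b}}\right]_\rho^\Z$, the representatives in $\{0,\dots,d-1\}$ of the Kummer characters of $a$ and $b$; these are exactly the exponents appearing in \eqref{eqn:cocyclesymbolalgebra}. The first step is to unwind the right-hand side of the proposition: since $P$ and $Q$ are $d$-torsion points, the multipliers $\frac{\gamma(\sqrt[d]{a})}{\sqrt[d]{a}}\in\mu_d$ and $\frac{\tau(\sqrt[d]{b})}{\sqrt[d]{b}}\in\mu_d$, read as elements of $\Z/d\Z$ via $[\,\cdot\,]_\rho$, act unambiguously, so $\frac{\gamma(\sqrt[d]{a})}{\sqrt[d]{a}}P=m_\gamma P$ and $\frac{\tau(\sqrt[d]{b})}{\sqrt[d]{b}}Q=n_\tau Q$, and bilinearity of the Weil pairing together with $\e(P,Q)=\rho$ yields
\[
\e\!\left(\frac{\gamma\left(\sqrt[d]{a}\right)}{\sqrt[d]{a}}P,\ \frac{\tau\left(\sqrt[d]{b}\right)}{\sqrt[d]{b}}Q\right)^{-1}=\e(m_\gamma P,\ n_\tau Q)^{-1}=\e(P,Q)^{-m_\gamma n_\tau}=\rho^{-m_\gamma n_\tau}.
\]
So it remains to show that the $\mu_d$-valued cochain $(\gamma,\tau)\mapsto\rho^{-m_\gamma n_\tau}$ is cohomologous to \eqref{eqn:cocyclesymbolalgebra}.

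For the second step I would take $h(\gamma):=\left(\sqrt[d]{a}\right)^{\,n_\gamma}$ and compute its coboundary $(\gamma,\tau)\mapsto h(\gamma)\,\gamma\!\left(h(\tau)\right)h(\gamma\tau)^{-1}$. Writing $\kappa(\gamma,\tau)\in\{0,1\}$ for the carry such that $a^{\kappa(\gamma,\tau)}$ equals the value of \eqref{eqn:cocyclesymbolalgebra} — so $\kappa(\gamma,\tau)=1$ precisely when $n_\gamma+n_\tau\ge d$ — the fact that the Kummer character of $b$ is a homomorphism $G_F\to\Z/d\Z$ gives $n_{\gamma\tau}=n_\gamma+n_\tau-d\,\kappa(\gamma,\tau)$, hence $h(\gamma\tau)^{-1}=\left(\sqrt[d]{a}\right)^{-n_\gamma-n_\tau}a^{\,\kappa(\gamma,\tau)}$. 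Since $\gamma\!\left(h(\tau)\right)=\left(\rho^{m_\gamma}\sqrt[d]{a}\right)^{n_\tau}=\rho^{m_\gamma n_\tau}\left(\sqrt[d]{a}\right)^{n_\tau}$, the powers of $\sqrt[d]{a}$ cancel and the coboundary collapses to $(\gamma,\tau)\mapsto\rho^{\,m_\gamma n_\tau}\,a^{\,\kappa(\gamma,\tau)}$, i.e. $\rho^{m_\gamma n_\tau}$ times the value of \eqref{eqn:cocyclesymbolalgebra}. Rearranging, $\rho^{-m_\gamma n_\tau}$ equals \eqref{eqn:cocyclesymbolalgebra} times the inverse of this coboundary, so the two cochains differ by a coboundary and represent the same class $(a,b)_{d,F}$, as claimed. (If \eqref{eqn:cocyclesymbolalgebra} is normalised with the roles of $a$ and $b$ exchanged, the same computation works with $h(\gamma)=\left(\sqrt[d]{b}\right)^{-m_\gamma}$; and when $a$ or $b$ is a $d$-th power in $F^\times$ the statement is vacuous, since then $m_\gamma\equiv0$ or $n_\gamma\equiv0$ and both cochains are trivial.)

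The only genuine obstacle is bookkeeping rather than any new idea: one must keep the integer representatives $m_\gamma,n_\gamma\in\{0,\dots,d-1\}$ carefully separated from their classes mod $d$, noting that the exponents of $a$ in \eqref{eqn:cocyclesymbolalgebra} and in the coboundary are honest integers arising from carries, whereas the exponent $m_\gamma n_\tau$ of $\rho$ only matters modulo $d$ because $\rho$ has exact order $d$. Once $h(\gamma\tau)^{-1}$ has been expanded through the carry identity $n_{\gamma\tau}=n_\gamma+n_\tau-d\,\kappa(\gamma,\tau)$, the cancellation in the coboundary is immediate and everything else is formal.
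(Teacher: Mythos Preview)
Your proof is correct and follows essentially the same route as the paper: both arguments use the very same $1$-cochain $h(\gamma)=\left(\sqrt[d]{a}\right)^{n_\gamma}$ (the paper calls it $g$) and compute its coboundary to pass between the carry cocycle \eqref{eqn:cocyclesymbolalgebra} and the Weil-pairing cocycle. Your presentation is slightly more explicit in that you first unwind the Weil-pairing expression to $\rho^{-m_\gamma n_\tau}$ before computing $dh$, whereas the paper leaves the answer written in terms of $\e(\cdot,\cdot)$ throughout, but the computation is identical; your careful bookkeeping with the carry $\kappa(\gamma,\tau)$ and the remark about swapped normalisations are also well taken, since both the statement and \eqref{eqn:cocyclesymbolalgebra} in the paper contain minor typos (the second argument of $\e$ should involve $\tau$, and the carry condition should read $n_\gamma+n_\tau\ge d$) which you have silently corrected.
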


\begin{proof}
	Consider the map $$g: \gamma \rightarrow {\sqrt[d]{a}}^{ \left[ \frac{\gamma\left( \sqrt[d]{b} \right)}{\left( \sqrt[d]{b}\right)}\right]_\rho^\Z }.$$
	Remark that since $\e(P,Q) = \rho$ and the Weil pairing is bilinear, we have
	\begin{align*} 
		\frac{\gamma \left( g(\tau)\right)}{g(\tau)}  
		&= \left( \frac{ \gamma\left( {\sqrt[d]{a}} \right)}{\sqrt[d]{a}}\right)^{ \left[ \frac{\tau\left( \sqrt[d]{b} \right)}{\left( \sqrt[d]{b}\right)}\right]_\rho^\Z } = \e \left(\left[\frac{\gamma\left( \sqrt[d]{a} \right)}{\left( \sqrt[d]{a} \right)}\right]_\rho P, \left[\frac{\tau\left( \sqrt[d]{b} \right)}{\left( \sqrt[d]{b} \right)}\right]_\rho Q \right)  
	\end{align*} 
	Furthermore, a direct computation gives
	\begin{align*} 
		\frac{ g(\tau) g(\gamma)}{g(\gamma\tau)} = \begin{cases} a & \text{if }\left[ \frac{\gamma\left( \sqrt[d]{b} \right)}{\sqrt[d]{b}}\right]_\rho^\Z + \left[ \frac{\tau\left( \sqrt[d]{b} \right)}{\sqrt[d]{b}}\right]_\rho^\Z \geq d \\ 1 & \text{else} \end{cases}.
	\end{align*}
	Therefore, subtracting the differential 
	$$dg(\gamma,\tau) = \frac{\gamma \left( g(\tau)\right) g(\gamma)}{g(\gamma\tau)} = \frac{ \gamma\left( g(\tau)\right)}{g(\tau)} \frac{ g(\tau)g(\gamma)}{g(\gamma\tau)}$$
	from the cocycle in (\ref{eqn:cocyclesymbolalgebra}) gives the desired result.
\end{proof}

\section{$M$ is $k$-rational}\label{Mkrational}

Let $d \geq 2$ be an integer coprime to the characteristic of the base field $k$. Assume additionally that $k$ contains a primitive $d$-th root of unity $\rho$. Fix an isomorphism $[.]_\rho: \mu_{d} \rightarrow \Z/d\Z$ with $\left[\rho^i\right] = i $. Furthermore, for $\rho^i \in \mu_d$, let $\left[\rho^i \right]_\rho^\Z = i \in \left\{ 0, \ldots, d-1 \right\}\subset \Z$. Let $E$ be an elliptic curve over $k$ and denote its $d$-torsion by $M$. Assume throughout this section that $M$ is $k$-rational. Fix two generators $P$ and $Q$ of $M$. Denote by $\e(.,.)$ the Weil pairing and assume that $\e(P,Q) = \rho$. Let $t_P, t_Q \in \kbar(E)$ with $\divisor(t_P) = d(P) - d(0)$ and $\divisor(t_Q) = d(Q) - d(0)$. We may assume that $t_P, t_Q \in k(E)$ since their divisors are invariant under the Galois action of $G_k$. Let $\mathcal{T}$ be the torsor given by multiplication by $d$ on $E$.

\begin{prop}\label{Prop:torsor mult by 3 over kbar}
	The pull-back  $\eta^*\left( \mathcal{T}\right)$ along the generic point $\eta: \Spec k(E) \ra E$ corresponds to the element in $H^1\left(k(E),M\right)$ given by the cocycle
	$$\gamma \mapsto \left[ \frac{\gamma\left(\alpha_Q\right)}{\alpha_Q} \right]_\rho P \ominus \left[\frac{\gamma\left(\alpha_P\right)}{\alpha_P} \right]_\rho Q,$$
	where $\alpha_P, \alpha_Q \in \overline{k(E)}$ with $\alpha_P^d = t_P$, $\alpha_Q^d = t_Q$. 
\end{prop}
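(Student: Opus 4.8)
The plan is to compute the cocycle of the pulled-back torsor $\eta^*(\mathcal{T})$ directly from the definition of the bijection between $H^1(k(E),M)$ and $k(E)$-torsors under $M$ recalled in Section~\ref{ch:background}. Since $\mathcal{T}$ is the $d$-covering of $E$ given by $[d]\colon E\to E$, its pullback along the generic point $\eta\colon\Spec k(E)\to E$ is the fiber $[d]^{-1}(\eta)$, which as a $k(E)$-variety is $\Spec$ of the étale algebra $k(E)\otimes_{k(E),[d]}k(E)$; concretely, a $\overline{k(E)}$-point of $\eta^*(\mathcal{T})$ is a point $x_0\in E(\overline{k(E)})$ with $[d]x_0 = \eta$, i.e. a $d$-division point of the generic point. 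The first step is therefore to fix such a point $x_0$ explicitly in terms of the functions $\alpha_P,\alpha_Q$: because $\divisor(t_P)=d(P)-d(0)$ and $\divisor(t_Q)=d(Q)-d(0)$, the function $t_P$ (resp. $t_Q$) is, up to a constant, the ``$P$-coordinate'' (resp. ``$Q$-coordinate'') of a point in the sense that the rational map $E\to E$, $x\mapsto x$, factored through the division-by-$d$ isogeny, is governed by extracting $d$-th roots of $t_P,t_Q$. I would make this precise using the standard identification $t_P\circ[d]\in(k(E)^\times)^d$ (guaranteed by the hypothesis) so that $\alpha_P := \sqrt[d]{t_P}$ and $\alpha_Q := \sqrt[d]{t_Q}$ generate the function field of the $d$-covering, and a $\overline{k(E)}$-point $x_0$ of the covering is determined by a compatible choice of these roots.

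The second step is to let $\gamma\in G_{k(E)}$ act on $x_0$ and read off the unique $a_\gamma\in M$ with $\gamma(x_0)=x_0\oplus a_\gamma$, per the recipe in Section~\ref{ch:background}. Since $[d](x_0)=\eta$ is $k(E)$-rational, $[d](a_\gamma)=0$, so $a_\gamma\in M$; and $a_\gamma$ is detected by how $\gamma$ permutes the chosen $d$-th roots $\alpha_P,\alpha_Q$. Here I would use that, under the Weil pairing normalization $\e(P,Q)=\rho$ and the isomorphism $[.]_\rho$, translation by $a_\gamma$ acts on the function $t_P$ (whose divisor is supported at $P$ and $0$) by a $d$-th root of unity recording the $Q$-component of $a_\gamma$, and on $t_Q$ by a $d$-th root of unity recording the $P$-component; this is the function-theoretic incarnation of the Weil pairing via the ``$t_P, t_Q$'' functions. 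Tracking signs carefully yields
\[
a_\gamma \;=\; \left[\frac{\gamma(\alpha_Q)}{\alpha_Q}\right]_\rho P \;-\; \left[\frac{\gamma(\alpha_P)}{\alpha_P}\right]_\rho Q,
\]
which is exactly the claimed cocycle. Finally I would check the cocycle condition and independence of the choices of $\alpha_P,\alpha_Q$ (changing them by $d$-th roots of unity alters $a_\gamma$ by a coboundary), confirming the class in $H^1(k(E),M)$ is well-defined.

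The main obstacle I anticipate is the careful bookkeeping in the second step: pinning down the exact relationship between the Galois action on $\alpha_P=\sqrt[d]{t_P}$ and the $Q$-component of the translation $a_\gamma$ (and symmetrically for $\alpha_Q$ and the $P$-component), including the precise sign and the swap between $P$ and $Q$. This amounts to proving that the functions $t_P,t_Q$ compute the Weil pairing in the stated normalization — essentially a version of the classical fact that the Weil pairing can be expressed via functions with divisors $d(P)-d(0)$ and $d(Q)-d(0)$ — and then matching it against the normalization $\e(P,Q)=\rho$ and the conventions for $[.]_\rho$ fixed in the Notation. Once that compatibility is established, the rest is a direct unwinding of the torsor–cocycle dictionary.
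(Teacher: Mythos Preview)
Your approach matches the paper's: fix a $\overline{k(E)}$-point $x_0$ of $\eta^*(\mathcal{T})$ via a choice of $d$-th roots, then determine $a_\gamma\in M$ by tracking how $\gamma$ moves these roots, using the Weil-pairing description of the translation action. The paper makes your second step precise by introducing $g_P,g_Q\in k(E)$ with $g_P^d=[d]^*t_P$, $g_Q^d=[d]^*t_Q$ and observing that it is $g_P$ (not $t_P$, whose divisor $d(P)-d(0)$ is \emph{not} $M$-translation-invariant) on which translation by $R\in M$ acts as the scalar $\e(R,P)$; since $x_0$ sends $g_P\mapsto\alpha_P$ and $g_Q\mapsto\alpha_Q$, the Galois action on $\alpha_P,\alpha_Q$ then reads off exactly the claimed components of $a_\gamma$.
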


\begin{proof}
	For the correspondence between torsors and elements in the first cohomology group see \Cref{ch:background}. Let $P' \in E(\kbar)$ so that $[d]P' = P$. Then there is some $g_P \in \kbar(E)$ with $$\divisor\left(g_P \right)= [d]^* (P) - [d]^* (0) = \sum_{R \in M} \left((P' \oplus R) - (R) \right).$$ Note that we may choose $g_P \in k(E)$ since the divisor is invariant under the action of the absolute Galois group of $k$. Now $\divisor\left(g_P^d\right) = \divisor\left( [d]^* t_P\right)$ and thus we may assume that $g_P^d = [d]^*t_P$. Similarly we find $g_Q \in k(E)$ with $g_Q^d = [d]^*t_Q$. \\
	Now consider the pullback of the torsor $\mathcal{T}$ along the generic point $\eta: \Spec k(E) \rightarrow \Spec k$. Fix a $\overline{k(E)}$-point $x_0$ of this pullback, i.e. a map of algebras so that $x_0 \circ [d]^* = \iota$, where $\iota: k(E) \rightarrow \overline{k(E)}$ is the inclusion. This means, that the following diagramms commute
	$$
	\xymatrix{
		& \Spec k(E) \ar[r]^-\eta \ar[d]^{[d]} & E \ar[d]^{[d]} \\
		\Spec \overline{k(E)} \ar[ru] \ar[r] & \Spec k(E) \ar[r]^-\eta & E}
	\qquad 	
	\xymatrix{ & k(E) \ar[ld]_{x_0} \\
		\overline{k(E)}& k(E)  \ar[u]_{[d]^*} \ar[l]^\iota}$$
	After possibly renaming $\alpha_P$ and $\alpha_Q$, we may assume that $x_0\left(g_P\right) = \alpha_P$ and $x_0\left(g_Q\right) = \alpha_Q$. 
	There is a group isomorphism
	$$	M \rightarrow \Gal\left(k(E)/[d]^*k(E) \right): R\mapsto \tau_R^*,$$
	where $\tau_R: E \rightarrow E$ is the translation by $R$-map; $\tau_R: E \rightarrow E: S \mapsto S \oplus R$. By the definition of the Weil-pairing $\e(R,P) = \frac{g_P (X\oplus S)}{g_P(X)} = \frac{ \tau_S^*(X)}{g_P(X)},$ for any $R \in M$, $X \in E(\kbar)$ any point so that $g_P(X)$ and $g_P(X \oplus S)$ are defined. The analogous result holds for $g_Q$ as well. Finally, we calculate   
	\begin{align*} 
	x_0 \circ \tau^*_{\left[\frac{\gamma(\alpha_Q)}{\alpha_Q}\right]_\rho P \ominus \left[\frac{\gamma(\alpha_P)}{\alpha_P}\right]_\rho Q} \left( g_P\right)
	&= x_0 \left( \e \left( \left[\frac{\gamma(\alpha_Q)}{\alpha_Q}\right]_\rho P \ominus \left[\frac{\gamma(\alpha_P)}{\alpha_P}\right]_\rho Q, P \right) g_P \right)\\
	&= x_0 \left( \e\left(\ominus \left[\frac{\gamma(\alpha_P)}{\alpha_P}\right]_\rho Q, P\right) g_P\right)\\
	&= x_0 \left({\frac{\gamma(\alpha_P)}{\alpha_P}}g_P \right)\\
	&= \gamma(\alpha_P)
	\end{align*}
	and similarly
	\begin{align*} 
	x_0 \circ \tau^*_{\left[\frac{\gamma(\alpha_Q)}{\alpha_Q}\right]_\rho P \ominus \left[\frac{\gamma(\alpha_P)}{\alpha_P}\right]_\rho Q} \left( g_Q \right)
	&= \gamma(\alpha_Q).	
	\end{align*}
	The statement follows since $k(E)/[d]^*k(E)$ is generated by $g_P$ and $g_Q$.
\end{proof}

Recall that $\epsilon$ is the composition 
\begin{equation*}
\mbox{\large{$\epsilon$ :}} \qquad 
\begin{gathered}
\xymatrix{
	H^1(k,M) \ar[r]^-\sim & H^1_{\text{\et}}\left(\Spec(k),M\right) \ar[r]^-{p^*}& H^1_{\text{\et}}(E,M)}\\
\xymatrix{ \ar[r]^-{- \cup [\mathcal{T}]}& H^2_{\text{\et}}(E,M \otimes M) \ar[r]^-\e& H^2_{\text{\et}}\left(E, \mu_d\right)  \ar[r]& \tor[d][\Br(E)]}.
\end{gathered}
\end{equation*}

In \cite[Theorem 4.1.1 and the following example]{skorobogatov}, the author proves abstractly that $\epsilon$ induces a splitting to (\ref{Eq:exact sequence on tor[d]Br(E)}) using general properties of torsors and the cup-product. We will now determine $\epsilon$ explicitly and prove directly that the map induces the desired splitting. 

\begin{prop}\label{Prop:epsilonk}On the level of cocycles $\epsilon$ coincides with the map that assigns to a 1-cocycle $f: G_k \rightarrow M$ the 2-cocycle 
	\begin{equation*}
	\begin{gathered}  
	\epsilon(f): G_{k(E)} \times G_{k(E)} \ra \overline{k(E)}^\times\\
	(\gamma,\tau) \mapsto \e\left( f(\gamma), \gamma \left( \left[\frac{\tau(\alpha_Q)}{\alpha_Q}\right]_\rho P \ominus \left[\frac{\tau(\alpha_P)}{\alpha_P}\right]_\rho  Q \right)  \right).
	\end{gathered} 
	\end{equation*} 
\end{prop}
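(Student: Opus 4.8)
The plan is to unwind the definition of $\epsilon$ one arrow at a time on the level of cocycles, using \cref{Prop:torsor mult by 3 over kbar} to provide an explicit cocycle for $[\mathcal{T}]$ pulled back to the generic point. The strategy is to work with the generic fiber throughout: since $\Br(E)$ injects into $\Br(k(E))$ and the \'etale cohomology groups $H^i_{\text{\'et}}(E,-)$ map compatibly to the Galois cohomology groups $H^i(k(E),-)$ under pullback along $\eta:\Spec k(E)\to E$, it suffices to compute the image of $\epsilon(f)$ in $H^2(k(E),\overline{k(E)}^\times)$ and identify it with the claimed cocycle. First I would start with a $1$-cocycle $f:G_k\to M$; its image under $p^*$ in $H^1_{\text{\'et}}(E,M)$, restricted to the generic point, is just $f$ viewed as a cocycle $G_{k(E)}\to M$ via the natural surjection $G_{k(E)}\twoheadrightarrow G_k$ (this is functoriality of inflation/pullback). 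Next, by \cref{Prop:torsor mult by 3 over kbar}, the class $\eta^*[\mathcal{T}]\in H^1(k(E),M)$ is represented by the cocycle $c:\gamma\mapsto \left[\frac{\gamma(\alpha_Q)}{\alpha_Q}\right]_\rho P-\left[\frac{\gamma(\alpha_P)}{\alpha_P}\right]_\rho Q$.

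The core computation is the cup product $f\cup c\in H^2(k(E),M\otimes M)$ followed by the map $\e$ induced by the Weil pairing $M\otimes M\to\mu_d$. On cochains, the cup product of the $1$-cocycles $f$ and $c$ with values in $M$ is the $2$-cochain $(\gamma,\tau)\mapsto f(\gamma)\otimes \gamma(c(\tau))$ (using the standard convention for the cup product in inhomogeneous cochains, where the second factor is twisted by $\gamma$). Applying $\e$ termwise then yields exactly
$$(\gamma,\tau)\mapsto \e\!\left(f(\gamma),\ \gamma\!\left(\left[\tfrac{\tau(\alpha_Q)}{\alpha_Q}\right]_\rho P-\left[\tfrac{\tau(\alpha_P)}{\alpha_P}\right]_\rho Q\right)\right),$$
which is the asserted formula. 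The last arrow $H^2_{\text{\'et}}(E,\mu_d)\to\tor[d][\Br(E)]$ is, under $\eta^*$, just the inclusion of $H^2(k(E),\mu_d)$ into $H^2(k(E),\overline{k(E)}^\times)=\Br(k(E))$ coming from $\mu_d\hookrightarrow\overline{k(E)}^\times$, so it does not change the cocycle. I would also need to check that $\eta^*$ commutes with each of $p^*$, $-\cup[\mathcal{T}]$, and $\e$ — the first is functoriality of pullback, the second is compatibility of the cup product with pullback of the fixed class $[\mathcal{T}]$, and the third is functoriality of the map on coefficients induced by the Weil pairing.

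The main obstacle I anticipate is bookkeeping with the cup-product sign/twist conventions and making sure the $\overline{k(E)}$-point $x_0$ chosen in the proof of \cref{Prop:torsor mult by 3 over kbar} is used consistently, so that $\alpha_P,\alpha_Q$ here are literally the same elements as there; a mismatch would introduce a coboundary. To handle this cleanly I would fix once and for all the $d$-th roots $\alpha_P=x_0(g_P)$, $\alpha_Q=x_0(g_Q)$ from that proof, and verify that the resulting discrepancy between the naive cup-product cochain and the claimed cocycle is a coboundary — concretely, a coboundary of the type $dg$ appearing in the proof of \cref{prop:sumbolalgebra-cocycle}, since that is precisely the ambiguity in representing a symbol-algebra class. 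A secondary point is that I have only produced a $2$-cocycle and must confirm it actually takes values in $\mu_d\subset\overline{k(E)}^\times$ (immediate, as the Weil pairing lands in $\mu_d$) and that it represents a class in the $d$-torsion of $\Br(k(E))$ lying in the image of $\Br(E)$ — but this follows because the entire construction takes place in $H^2_{\text{\'et}}(E,\mu_d)$ before pulling back. Granting those checks, the identification of $\epsilon$ with the displayed cocycle map is complete.
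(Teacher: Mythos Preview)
Your proposal is correct and follows essentially the same route as the paper: pull everything back along $\eta$ to work in Galois cohomology of $k(E)$, invoke the explicit cocycle for $\eta^*[\mathcal{T}]$ from \cref{Prop:torsor mult by 3 over kbar}, and then use the standard inhomogeneous cup-product formula $(\gamma,\tau)\mapsto f(\gamma)\otimes\gamma(c(\tau))$ followed by $\e$. The paper's proof is terser---it cites Bredon for compatibility of $\eta^*$ with cup products, Serre for the cup-product formula, and Colliot-Th\'el\`ene--Sansuc for the injection $\Br(E)\hookrightarrow\Br(k(E))$---but the content is the same, and your anticipated ``obstacles'' (convention checks, values in $\mu_d$) are exactly the routine verifications the paper leaves implicit.
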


\begin{proof}The cup-product commutes with $\eta^*$ by \cite[Chapter 2, 8.2]{Bredon:Sheaf-Theory}. Consider the following diagram with commutative squares
	$$
	\xymatrix{
		H^1(k,M) \ar[r]^{p^*} & H^1_{\text{\et}}(E,M) \ar[d]^{\eta^*} \ar[rr]^-{-\cup [\mathcal{T}]} && H^2_{\text{\et}}(E,M \otimes M) \ar[r]^-\e_-\sim \ar[d]^{\eta^*}	& H^2_{\text{\et}}(E,\mu_d)\ar[d] \\ 
		& H^1_{\text{\et}}(k(E),M) \ar[rr]^-{-\cup [\eta^*\mathcal{T}]} && H^2_{\text{\et}}(k(E), M \otimes M ) \ar[r]^-\e_-\sim & \tor[d][\Br k(E)]
	}$$
	The statement follows from \Cref{prop:torsormult3} and by the definition of the cup-product in group cohomology \cite[Chapter VIII, Section 3]{serrelocal}. Recall that by \cite[Theorem 5.11]{colliot-thelene-sansuc:therationalityproblem} the map on the right is given by the injection that identifies $\Br(E)$ with the unramified Brauer group of $k(E)$. 
\end{proof}

We are now ready to compute the elements of $\tor[d][\Br(E)]$. Since we assume that $M$ is $k$-rational, Kummer theory implies that there is an isomorphism 
\begin{align}\label{Eq:phi definition}  \phi: k^\times/(k^\times)^d \times k^\times/(k^\times)^d \rightarrow H^1(k,M): (a,b) \mapsto c_{a,b},\end{align} 
where $c_{a,b}$ can be represented by the cocycle
$$G_k \rightarrow M: \gamma \mapsto \left[ \frac{\gamma\left( \sqrt[d]{a} \right) }{\sqrt[d]{a}} \right]_\rho P \oplus \left[ \frac{\gamma\left( \sqrt[d]{b} \right) }{\sqrt[d]{b}} \right]_\rho Q.$$ 

\begin{prop}\label{Prop:epsilon-split-case}The cocycle
	$\epsilon \circ \phi (a,b)$ corresponds to the Brauer class of $$\left(a,t_P\right)_{d,k(E)} \otimes \left( b, t_Q \right)_{d,k(E)}$$ for any 
	$(a,b) \in k^\times/(k^\times)^d \times k^\times/(k^\times)^d$. 
\end{prop}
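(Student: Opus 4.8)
The plan is to compute the image of $\epsilon\bigl(\phi(a,b)\bigr)=\epsilon(c_{a,b})$ in $\tor[d][\Br k(E)]$ as an explicit $2$-cocycle by feeding $c_{a,b}$ into the formula of \cref{Prop:epsilonk}, to simplify that cocycle using the $k$-rationality of $M$ and the bilinearity of the Weil pairing, and finally to recognize the result as the cocycle representing the tensor product $\left(a,t_P\right)_{d,k(E)}\otimes\left(b,t_Q\right)_{d,k(E)}$ furnished by \cref{prop:sumbolalgebra-cocycle}.

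First I would substitute $f=c_{a,b}$ into \cref{Prop:epsilonk}. Since $M$ is $k$-rational, $G_{k(E)}$ acts trivially on $M$, so the inner $\gamma(\,\cdot\,)$ there may be dropped: the point $\left[\frac{\tau(\alpha_Q)}{\alpha_Q}\right]_\rho P-\left[\frac{\tau(\alpha_P)}{\alpha_P}\right]_\rho Q$ is an integer combination of $P$ and $Q$, which $\gamma$ fixes. Using $c_{a,b}(\gamma)=\left[\frac{\gamma(\sqrt[d]{a})}{\sqrt[d]{a}}\right]_\rho P+\left[\frac{\gamma(\sqrt[d]{b})}{\sqrt[d]{b}}\right]_\rho Q$, this yields
\[
\epsilon(c_{a,b})(\gamma,\tau)=\e\!\left(\left[\frac{\gamma(\sqrt[d]{a})}{\sqrt[d]{a}}\right]_\rho P+\left[\frac{\gamma(\sqrt[d]{b})}{\sqrt[d]{b}}\right]_\rho Q,\ \left[\frac{\tau(\alpha_Q)}{\alpha_Q}\right]_\rho P-\left[\frac{\tau(\alpha_P)}{\alpha_P}\right]_\rho Q\right).
\]
Expanding by bilinearity of $\e$, the terms involving $\e(P,P)$ and $\e(Q,Q)$ vanish, and using $\e(P,Q)=\rho$, $\e(Q,P)=\rho^{-1}$ the two remaining terms collapse to
\[
\epsilon(c_{a,b})(\gamma,\tau)=\rho^{\,-\left[\frac{\gamma(\sqrt[d]{a})}{\sqrt[d]{a}}\right]_\rho^\Z\left[\frac{\tau(\alpha_P)}{\alpha_P}\right]_\rho^\Z-\left[\frac{\gamma(\sqrt[d]{b})}{\sqrt[d]{b}}\right]_\rho^\Z\left[\frac{\tau(\alpha_Q)}{\alpha_Q}\right]_\rho^\Z}.
\]

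To close the argument, I would use that $\alpha_P^d=t_P$ and $\alpha_Q^d=t_Q$, so the two exponent factors on the right become $\left[\frac{\tau(\sqrt[d]{t_P})}{\sqrt[d]{t_P}}\right]_\rho$ and $\left[\frac{\tau(\sqrt[d]{t_Q})}{\sqrt[d]{t_Q}}\right]_\rho$. By \cref{prop:sumbolalgebra-cocycle}, the symbol algebra $\left(a,t_P\right)_{d,k(E)}$ is represented by $(\gamma,\tau)\mapsto\e\!\left(\frac{\gamma(\sqrt[d]{a})}{\sqrt[d]{a}}P,\frac{\tau(\sqrt[d]{t_P})}{\sqrt[d]{t_P}}Q\right)^{-1}=\rho^{-\left[\frac{\gamma(\sqrt[d]{a})}{\sqrt[d]{a}}\right]_\rho^\Z\left[\frac{\tau(\sqrt[d]{t_P})}{\sqrt[d]{t_P}}\right]_\rho^\Z}$, and likewise for $\left(b,t_Q\right)_{d,k(E)}$. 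Since $\Br\bigl(k(E)\bigr)\cong H^2\bigl(k(E),\overline{k(E)}^\times\bigr)$ is a group isomorphism under which $\otimes$ corresponds to the product of representing cocycles, the class of $\left(a,t_P\right)_{d,k(E)}\otimes\left(b,t_Q\right)_{d,k(E)}$ is represented by the product of the two cocycles above, which is precisely the last display for $\epsilon(c_{a,b})$. As \cref{Prop:epsilonk} identifies that display with the image of $\epsilon\bigl(\phi(a,b)\bigr)$ under the injection $\tor[d][\Br(E)]\hookrightarrow\tor[d][\Br k(E)]$, the proposition follows.

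The step I expect to demand the most care is the bookkeeping of the $\mu_d\cong\Z/d\Z$ normalizations and the placement of $\gamma$ versus $\tau$ in the symbol cocycle, so that the two $2$-cocycles coincide on the nose rather than merely up to a coboundary; should an unwanted coboundary appear, it can be removed by the same kind of explicit $1$-cochain as the cochain $g$ in the proof of \cref{prop:sumbolalgebra-cocycle}. A lesser, routine point is that $c_{a,b}$ is a priori a $G_k$-cocycle and is used here through inflation as a $G_{k(E)}$-cocycle, which is harmless because $k$-rationality of $M$ makes it a trivial module for both Galois groups.
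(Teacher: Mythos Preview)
Your proposal is correct and follows essentially the same route as the paper: substitute $c_{a,b}$ into the explicit formula of \cref{Prop:epsilonk}, expand the Weil pairing bilinearly (killing the $\e(P,P)$ and $\e(Q,Q)$ terms), and identify the two surviving factors with the cocycles for $(a,t_P)_{d,k(E)}$ and $(b,t_Q)_{d,k(E)}$ via \cref{prop:sumbolalgebra-cocycle}. The paper's proof is slightly terser---it stops at the factored Weil-pairing expression and invokes \cref{prop:sumbolalgebra-cocycle} directly---while you additionally spell out the $\rho$-exponents and the observation that $k$-rationality of $M$ lets one drop the inner $\gamma(\cdot)$; but the argument is the same.
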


\begin{proof}Observe that  $	\epsilon_k\circ \phi(a,b)$ can be represented by the cocycle that takes $(\gamma,\tau) \in G_{k(E)} \times G_{k(E)}$ to 
	\begin{align*}
	&\e\left( \left[ \frac{\gamma\left( \sqrt[d]{a} \right) }{\sqrt[d]{a}} \right]_\rho P \oplus \left[ \frac{\gamma\left( \sqrt[d]{b} \right) }{\sqrt[d]{b}} \right]_\rho Q, \left[\frac{\tau(\alpha_Q)}{\alpha_Q}\right]_\rho P \ominus \left[\frac{\tau(\alpha_P)}{\alpha_P}\right]_\rho  Q  \right)\\
	=& \e\left( \left[ \frac{\gamma\left( \sqrt[d]{a} \right) }{\sqrt[d]{a}} \right]_\rho P, \left[\frac{\tau(\alpha_P)}{\alpha_P}\right]_\rho  Q  \right)^{-1} \e\left(\left[ \frac{\gamma\left( \sqrt[d]{b} \right) }{\sqrt[d]{b}} \right]_\rho Q, \left[\frac{\tau(\alpha_Q)}{\alpha_Q}\right]_\rho P \right).
	\end{align*}
	The statement follows from \Cref{prop:sumbolalgebra-cocycle}.
\end{proof}
Recall that we need to prove that $\epsilon$ induces a splitting to the sequence (\ref{Eq:exact sequence on tor[d]Br(E)}) on the right, i.e. we need to show that $r \circ \epsilon = \lambda$ and $\epsilon(\ker (\lambda))= 0$. We first describe $r$ explicitly. Recall from \cite{Lichtenbaum1969} the following commutative diagram with exact rows and columns
$$
\xymatrix{ & 0\ar[d] & 0 \ar[d]\\
\Br(k) \ar[r] \ar@{=}[d] & \Br(E)\ar[d]^\theta  & \H^1\left( k, \Pic(\overline{E})\right) \ar[d]^\nu\\
\Br(k) \ar[r] & \H^2\left( k, \overline{k}(E)^\times \right) \ar[r]^\mu \ar[d] & \H^2\left( k, \Prin(\overline{E})\right)\ar[d] \ar[r] & \H^3\left( k, \overline{k}^\times\right) \\
& \H^2 \left( k, \Div(\overline{E})\right) \ar[r]^\sim & \H^2\left( k, \Div(\overline{E})\right) } 
$$
where $\Prin(\overline{E})$ denotes the set of principal divisors, $\Div(\overline{E})$ the set of divisors, and $\Pic(\overline{E})$ the Picard group of $\overline{E}$. The right column is induced by the exact sequence 
$$0 \ra \Prin(\overline{E}) \ra \Div(\overline{E}) \ra \Pic(\overline{E}) \ra 0$$
and the second row is induced by 
$$0 \ra \kbar^\times \ra \kbar(E)^\times \ra \Prin(\overline{E}) \ra 0.$$
Therefore, for any $\alpha \in \Br(E)$, there is some unique element $\alpha' \in \H^1\left( k, \Pic(\overline{E})\right)$ with $\nu(\alpha') = \mu\circ \theta(\alpha)$. We will show in the following that $\alpha'$ is in the image of the embedding 
$$\H^1\left( k,E(\kbar)\right) \cong \H^1\left( k, \Pic^0(\overline{E})\right) \ra \H^1\left( k, \Pic(\overline{E})\right),$$
where $\Pic^0(\overline{E})$ denotes the degree $0$ piece of $\Pic(\overline{E})$. Consider the degree sequence
$$
0 \ra \Div^0(\overline{E}) \ra \Div(\overline{E}) \ra \ZZ \ra 0,
$$
where $\Div^0(\overline{E})$ is the group of degree zero divisors. Since $\H^1\left( k,\ZZ\right) = 0$, the map 
$$\H^2\left( k, \Div^0(\overline{E}) \right) \ra \H^2\left( k, \Div(\overline{E})\right)$$
is injective. This implies that $\mu \circ \theta (\alpha)$ is in the kernel of  
$$\H^2\left(k, \Prin(\overline{E})\right) \ra \H^2\left( k, \Div^0(\overline{E})\right)$$
induced by the short exact sequence 
$$0 \ra \Prin(\overline{E}) \ra \Div^0(\overline{E}) \ra \Pic^0(\overline{E}) \cong E(\kbar) \ra 0.$$
In particular, $\mu \circ \theta (\alpha)$ lifts uniquely to an element $\alpha'' \in \H^1\left( k, E(\kbar)\right)$. We then find that the map $r$, induced by the Hochschild-Serre spectral sequence, is given by $r(\alpha) =\alpha''$ \cite{Faddeev:SimpleAlgebrasOverAFieldOfAglebraicFunctionsOfOneVariable, Lichtenbaum1969}.

\begin{prop}\label{prop:kappcircepsilon=lambda_split}
	$r \circ \epsilon = \lambda$. 
\end{prop}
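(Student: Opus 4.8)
The plan is to verify the identity of maps $\H^1(k,M)\to\tor[d][\H^1(k,E(\kbar))]$ at the level of cocycles. Since $M$ is $k$-rational, $\H^1(k,M)=\Hom(G_k,M)$ and the Kummer isomorphism $\phi$ of \cref{Eq:phi definition} is surjective, so it suffices to show $r(\epsilon(c_{a,b}))=\lambda(c_{a,b})$ for all $a,b\in k^\times$. Write $A(\gamma)=\left[\frac{\gamma(\sqrt[d]{a})}{\sqrt[d]{a}}\right]_\rho^\Z$ and $B(\gamma)=\left[\frac{\gamma(\sqrt[d]{b})}{\sqrt[d]{b}}\right]_\rho^\Z$, so that $A(\gamma),B(\gamma)\in\{0,\dots,d-1\}$ and $c_{a,b}$ is represented by $\gamma\mapsto A(\gamma)P\oplus B(\gamma)Q$. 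By the definition of $\lambda$ in \cref{eq:Kummer induces on cohomology}, $\lambda(c_{a,b})$ is the class in $\H^1(k,E(\kbar))$ of exactly this cocycle, now regarded with values in $E(\kbar)$ through $M\hookrightarrow E(\kbar)$.

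The key step is to produce an explicit $G_k$-cocycle with values in $\kbar(E)^\times$ representing $\epsilon(c_{a,b})$. By \cref{Prop:epsilon-split-case} this Brauer class equals $(a,t_P)_{d,k(E)}\otimes(b,t_Q)_{d,k(E)}$. Because $a,b\in k^\times$, the associated cyclic extensions of $k(E)$ are the constant extensions $k(\sqrt[d]{a})(E)$ and $k(\sqrt[d]{b})(E)$; transporting the standard cyclic-algebra cocycle (compare \cref{eqn:cocyclesymbolalgebra}) through the isomorphism $\H^2(G_k,\kbar(E)^\times)\cong\Br k(E)$ of Tsen's theorem should represent $\epsilon(c_{a,b})$ by
\begin{equation*}
(\gamma,\tau)\ \longmapsto\ t_P^{\,\kappa_a(\gamma,\tau)}\,t_Q^{\,\kappa_b(\gamma,\tau)},\qquad
\kappa_a(\gamma,\tau)=\begin{cases}1 & \text{if }A(\gamma)+A(\tau)\ge d,\\ 0 & \text{otherwise,}\end{cases}
\end{equation*}
and similarly for $\kappa_b$ with $B$ in place of $A$. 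I expect this transport across Tsen's isomorphism to be the main obstacle: one must check that the displayed $\kbar(E)^\times$-valued cocycle genuinely represents the same class as the symbol-algebra description, reconcile signs with \cref{Prop:epsilonk}, and deal with the case where $\sqrt[d]{a}$ generates a proper subextension, so that the ``carry'' term is read modulo a proper divisor of $d$.

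Granting such a representative $c$, I would then run it through the zig-zag defining $r$ that precedes the statement. Applying $\divisor$ gives the $\Prin(\overline{E})$-valued $2$-cocycle $(\gamma,\tau)\mapsto\kappa_a(\gamma,\tau)\big(d(P)-d(0)\big)+\kappa_b(\gamma,\tau)\big(d(Q)-d(0)\big)$. Since $P,Q,0\in E(k)$, the divisors $(P),(Q),(0)$ are $G_k$-fixed, so the $\Div^0(\overline{E})$-valued $1$-cochain $D(\gamma)=A(\gamma)\big((P)-(0)\big)+B(\gamma)\big((Q)-(0)\big)$ satisfies $\partial D=\divisor\circ c$, thanks to the defining identity $A(\gamma)+A(\tau)-A(\gamma\tau)=d\,\kappa_a(\gamma,\tau)$ of the carry (and likewise for $B$). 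Thus $\divisor\circ c$ is a coboundary already in $\H^2(G_k,\Div^0(\overline{E}))$, and the standard connecting-map argument for $0\to\Prin(\overline{E})\to\Div^0(\overline{E})\to E(\kbar)\to0$ shows that $r(\epsilon(c_{a,b}))$ is represented by the $1$-cocycle $\gamma\mapsto[D(\gamma)]$ in $\Pic^0(\overline{E})=E(\kbar)$. Under the standard identification $[(R)-(0)]\leftrightarrow R$ one has $[D(\gamma)]=A(\gamma)P\oplus B(\gamma)Q$, which is precisely the cocycle representing $\lambda(c_{a,b})$ from the first paragraph; this gives $r\circ\epsilon=\lambda$. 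Apart from the Tsen bookkeeping above, the only point needing care here is to use the fixed $\Z$-representatives $A(\gamma),B(\gamma)\in\{0,\dots,d-1\}$ throughout, so the carries produced by $D$ and by $c$ agree on the nose rather than merely modulo $d$.
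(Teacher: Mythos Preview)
Your proposal is correct and follows essentially the same route as the paper: represent $\epsilon\circ\phi(a,b)$ by the carry cocycle $(\gamma,\tau)\mapsto t_P^{\kappa_a}t_Q^{\kappa_b}$ in $\H^2(G_k,\kbar(E)^\times)$, pass to $\Prin(\overline E)$ via $\divisor$, lift along $D(\gamma)=A(\gamma)((P)-(0))+B(\gamma)((Q)-(0))$, and observe that the boundary reproduces the carry, so the class in $\H^1(k,E(\kbar))$ is $\gamma\mapsto A(\gamma)P\oplus B(\gamma)Q=\lambda(c_{a,b})$. The paper carries this out only for $(a,1)$ and declares the rest similar; the worries you flag (Tsen transport, proper subextensions, signs) are handled in the paper simply by invoking the standard cocycle \cref{eqn:cocyclesymbolalgebra} together with the fact that $a\in k^\times$ makes the displayed cocycle already $G_k$-valued, so inflation to $G_{k(E)}$ recovers the symbol-algebra cocycle on the nose.
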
 

\begin{proof}We will only prove that $r \circ \epsilon\circ \phi(a,1) = \lambda\circ \phi(a,1).$ The other cases are similar. We showed previously that $\epsilon \circ \phi(a,1) = (a,t_P)_{d,k(E)}$, which corresponds to the cocycle 
	$$(\gamma,\tau) \mapsto \begin{cases} 1 & \left[\frac{\gamma(\sqrt[d]{a})}{\sqrt[d]{a}}\right]_\rho^\Z + \left[\frac{\tau(\sqrt[d]{a})}{\sqrt[d]{a}}\right]_\rho^\Z < d \\
	t_P  & \text{else}\end{cases}$$
	in $H^2(k, \kbar(E)^\times)$. This gives an element in $H^2(k, \Prin(\overline{E}))$ via 
	$$(\gamma,\tau) \mapsto \begin{cases} 1 & \left[\frac{\gamma(\sqrt[d]{a})}{\sqrt[d]{a}}\right]_\rho^\Z + \left[\frac{\tau(\sqrt[d]{a})}{\sqrt[d]{a}}\right]_\rho^\Z < d \\
	d(P) - d(0)  & \text{else} \end{cases}.$$
	On the other hand for any $\gamma \in G_k$
	$$\lambda(\phi(a,1)) (\gamma) = \phi(a,1)(\gamma) =  \left[\frac{\gamma(\sqrt[d]{a})}{\sqrt[d]{a}}\right]_\rho^\Z P.$$
	Now we follow the proof of the snake lemma to calculate the image of $\lambda(f)$ under the connecting homomorphism 
	$$H^1(k, E(\kbar)) \rightarrow H^2(k, \Prin(\overline{E}))$$
	induced by the sequence 
	$$0 \ra \Prin(\overline{E}) \ra \Div^0(\overline{E}) \ra \Pic^0(\overline{E}) \cong  E(\kbar) \ra 0. $$
	First lift it to 
	$$\gamma \mapsto \left[\frac{\gamma(\sqrt[d]{a})}{\sqrt[d]{a}}\right]_\rho^\Z \left(  (P) - (0) \right) \in \Div^0(\overline{E}).$$
	Now use the boundary map to get\\
	\resizebox{.9\linewidth}{!}{
	\begin{minipage}{\linewidth}
	\begin{align*} 
	(\gamma,\tau) \mapsto &\gamma \left( \left[ \frac{\tau(\sqrt[d]{a})}{\sqrt[d]{a}}\right]_\rho^\Z \left(  (P) - (0) \right)\right) -  \left[ \frac{\gamma\tau(\sqrt[d]{a})}{\sqrt[d]{a}}\right]_\rho^\Z \left(  (P) - (0) \right) +  \left[\frac{\tau(\sqrt[d]{a})}{\sqrt[d]{a}}\right]_\rho^\Z \left(  (P) - (0) \right)\\
	&= \left[\frac{\tau(\sqrt[d]{a})}{\sqrt[d]{a}}\right]_\rho^\Z \left(  (P) - (0) \right) - \left[ \frac{\gamma\tau(\sqrt[d]{a})}{\sqrt[d]{a}}\right]_\rho^\Z \left(  (P) - (0) \right) + \left[ \frac{\tau(\sqrt[d]{a})}{\sqrt[d]{a}} \right]_\rho^\Z \left(  (P) - (0) \right)\\
	&=  \begin{cases} d(P) - d(0) & \text{if } \left[\frac{\gamma(\sqrt[d]{a})}{\sqrt[d]{a}}\right]_\rho^\Z + \left[\frac{\tau(\sqrt[d]{a})}{\sqrt[d]{a}}\right]_\rho^\Z \geq d \\
	1 & \text{else} \end{cases},
	\end{align*}\\
	\end{minipage}}\\
	which coincides with what we previously calculated. The statement follows.  
\end{proof} 

\begin{prop}\label{prop:epsilonkerlambda=0} 
	$\epsilon\left( \ker(\lambda) \right) = 0.$ 
\end{prop}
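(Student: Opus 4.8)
I do not yet see the author's argument, so here is how I would approach it.

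The plan is to deduce the statement abstractly, combining the identity $r \circ \epsilon = \lambda$ of \cref{prop:kappcircepsilon=lambda_split} with the fact that the zero section $s$ splits (\ref{Eq:exact sequence on tor[d]Br(E)}) on the left, i.e.\ $s^* \circ i = \mathrm{id}$. Let $f \in \ker(\lambda)$; I want $\epsilon(f) = 0$. By \cref{prop:kappcircepsilon=lambda_split} we have $r(\epsilon(f)) = \lambda(f) = 0$, so $\epsilon(f) \in \ker(r)$, which by exactness of (\ref{Eq:exact sequence on tor[d]Br(E)}) equals $\Image(i)$; write $\epsilon(f) = i(\beta)$ with $\beta \in \tor[d][\Br(k)]$. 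Since $s^* \circ i = \mathrm{id}$, it is enough to prove $s^*(\epsilon(f)) = 0$: then $\beta = s^*(i(\beta)) = s^*(\epsilon(f)) = 0$, hence $\epsilon(f) = i(0) = 0$. Note this argument uses $f \in \ker(\lambda)$ only through $\lambda(f)=0$, so one need not identify $f$ with an image of $\delta$.

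To compute $s^*(\epsilon(f))$ I would chase the definition (\ref{Eq:epsilon}) of $\epsilon$, using that all the maps occurring there are natural in the base, so that $s^*$ commutes with them. Because $p \circ s = \mathrm{id}_{\Spec k}$ we get $s^* p^* f = f$, and therefore $s^*(\epsilon(f))$ is the image of the class $f \cup s^*[\mathcal{T}] \in H^2(k, M \otimes M)$ under $\e$ and the canonical map to $\tor[d][\Br(k)]$. So it suffices to show $s^*[\mathcal{T}] = 0$ in $H^1(k, M)$. But $s^*[\mathcal{T}]$ is the class of the torsor obtained by pulling back $\mathcal{T}$ — the $d$-covering $[d] : E \to E$ — along the zero section, namely the fibre $[d]^{-1}(0)$; this is $M$ acting on itself by translation, the split $k$-torsor under $M$, since it contains the rational point $0$. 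Hence $s^*[\mathcal{T}] = 0$, so $f \cup s^*[\mathcal{T}] = 0$ and $s^*(\epsilon(f)) = 0$, which finishes the proof.

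The only step that is not pure functoriality is the identification of $s^*[\mathcal{T}]$ with the trivial class, and I expect this to be the one point worth spelling out carefully; it is immediate from the description of $\mathcal{T}$ as a $d$-covering together with the torsor/$H^1$ dictionary recalled in \cref{ch:background}. Everything else is bookkeeping with the maps in (\ref{Eq:exact sequence on tor[d]Br(E)}) and (\ref{Eq:epsilon}).

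It is worth carrying out the computation concretely as well, since this is where the relations in \cref{Prop:main result in split case} come from: for $R \in E(k)$ with $R \neq 0, P, Q$ one chooses $g_P, g_Q \in k(E)$ with $g_P^d = [d]^* t_P$ and $g_Q^d = [d]^* t_Q$ as in the proof of \cref{Prop:torsor mult by 3 over kbar}, and uses the defining relation $\e(S,P) = g_P(X \oplus S)/g_P(X)$ of the Weil pairing to rewrite the Kummer cocycle $\delta(R) : \gamma \mapsto \gamma(R') \ominus R'$ (with $[d]R' = R$) in the Kummer coordinates of (\ref{Eq:phi definition}); by \cref{Prop:epsilon-split-case} this exhibits $\epsilon(\delta(R))$ as a tensor product of two symbol algebras with arguments among $t_P(R), t_Q(R), t_P, t_Q$ — precisely the third type of relation in \cref{Prop:main result in split case}. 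The abstract vanishing above then says exactly that these symbol products are split, and it simultaneously covers the classes of $R \in \{0,P,Q\}$, for which the evaluation formula degenerates.
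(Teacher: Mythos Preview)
Your proof is correct and follows essentially the same route as the paper's: both use $r\circ\epsilon=\lambda$ to place $\epsilon(f)$ in the image of $\Br(k)\to\Br(E)$, then kill it by specializing at the zero section and observing that $s^*[\mathcal{T}]=\mathcal{T}_0$ is the trivial torsor because it contains the rational point $0$. The paper phrases the last step via specialization of the cup product at a closed point (citing Bredon), whereas you phrase it via functoriality of the maps in (\ref{Eq:epsilon}); these are the same argument, and your remark that one need not pass through $\delta$ is a small but valid simplification.
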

\begin{proof} 
	Recall that $\ker(\lambda) = \text{Im}(\delta)$ and let $R \in E(k)$. By the previous proposition $r\circ\epsilon\circ \delta (R) = \lambda \circ \delta(R)$ is trivial. Thus the algebra $\epsilon\circ \delta (R) $ is in the image of the $\Br(k) \rightarrow \Br(E)$. It remains to show that the specialization of $\epsilon \circ \delta(R)$ at $0$ is trivial. The cup-product commutes with specialization at a closed point \cite[Chapter 2, 8.2]{Bredon:Sheaf-Theory}, i.e. $\left( [\mathcal{T}'] \cup [\mathcal{T}]\right)_S = \left( [\mathcal{T}']\right)_S \cup \left( [\mathcal{T}]\right)_S$ for every $S \in E(k)$ and every $[\mathcal{T}']\in H^1_{\text{\et}} (E, M)$. By definition of $\epsilon$ $$\left( \epsilon\circ \delta (R) \right)_S =  \left( \epsilon\circ \delta (R) \right)_S =   \delta(R) \cup \mathcal{T}_S \in \Br(k)$$ for any $S \in E(k)$. 
	In particular, $\left( \epsilon\circ \delta (R) \right)_0 = \delta(R) \cup \mathcal{T}_0$. The specialization of $\mathcal{T}$ at $0$ admits a point (the point $0$) and is therefore the trivial torsor. We deduce that  $\left( \epsilon\circ \delta (R) \right)_0 $ is trivial and thus so is $\epsilon\circ \delta (R)$. 
\end{proof} 

We conclude that under the above assumptions, the $d$-torsion of $\Br(E)$ decomposes as 
	$$\tor[d][\Br(E)] =\tor[d][\Br(k)] \oplus I$$ and every element in $I$ can be represented as a tensor product 
	$$\left( a, t_P \right)_{d,k(E)} \otimes \left( b, t_Q \right)_{d,k(E)}$$ with $a,b \in k^\times$.
	We now proceed to determine the relations in $I$.  
Recall that an element in $I$ as before is trivial if and only if it is in the image of the composition 
$$\xymatrix{
	E(k)/[d]E(k) \ar[r]^-\delta & H^1(k,M) \ar[r]^-\epsilon & \tor[d][\Br(k)]
}.$$
We first need to describe the image of $\phi^{-1} \circ \delta$ explicitly, where $\phi$ is the isomorphism from (\ref{Eq:phi definition}).
\begin{prop}Let $R \in E(k)/[d]E(k)$ and let $t_P, t_Q \in k(E)$ with $\divisor(t_P) = d(P) - d(0)$, $\divisor (t_Q) = d(Q) -d(0)$. By the argument in the proof of \Cref{Prop:torsor mult by 3 over kbar}, we may assume that  $t_P\circ [d], t_Q \circ [d] \in (k(E)^\times)^d$. Then 
	$$\phi^{-1} \circ \delta (R) = \begin{cases} 
	(1,1) & R = 0\\
	\left( t_Q(P), \frac{t_P(P\oplus Q)}{t_P(Q)} \right) & R = P \\
	\left( \frac{ t_Q(P\oplus Q)}{t_Q(P)}, t_P(Q)  \right) & R= Q \\ 
	\left( t_Q (R), t_P(R) \right) & \text{else} \end{cases} $$
\end{prop}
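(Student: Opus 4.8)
The plan is to compute the Kummer connecting map $\delta\colon E(k)/[d]E(k) \to H^1(k,M)$ explicitly and then transport it through the Kummer isomorphism $\phi$ of \cref{Eq:phi definition}. Recall that $\delta$ is defined by: given $R \in E(k)$, choose $R' \in E(\kbar)$ with $[d]R' = R$, and then the cocycle is $\gamma \mapsto \gamma(R') \ominus R' \in M$. The entire problem is therefore to identify, for $R = P$, $R = Q$, and a generic $R$, the class of this cocycle under $\phi^{-1}$, i.e. to find the pair $(a,b) \in k^\times/(k^\times)^d \times k^\times/(k^\times)^d$ such that $\gamma \mapsto \left[\gamma(\sqrt[d]{a})/\sqrt[d]{a}\right]_\rho P \oplus \left[\gamma(\sqrt[d]{b})/\sqrt[d]{b}\right]_\rho Q$ agrees with $\gamma \mapsto \gamma(R') \ominus R'$. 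Writing $\gamma(R') \ominus R' = mP \oplus nQ$ with $m, n$ depending on $\gamma$, the Weil pairing lets us read off the coefficients: $\e(\gamma(R')\ominus R', Q) = \e(P,Q)^m = \rho^m$ and $\e(\gamma(R')\ominus R', P) = \e(Q,P)^n = \rho^{-n}$. So it suffices to evaluate these two Weil pairings and match them with $\gamma(\sqrt[d]{a})/\sqrt[d]{a}$ and $\gamma(\sqrt[d]{b})/\sqrt[d]{b}$.

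The key computational device is the standard relation between the Weil pairing and the functions $t_P, t_Q$. Since $\divisor(t_P) = d(P) - d(0)$ and $t_P \circ [d] \in (k(E)^\times)^d$, there is $g_P \in k(E)$ (as in the proof of \cref{Prop:torsor mult by 3 over kbar}) with $g_P^d = [d]^* t_P$, and the Weil pairing can be expressed via $\e(S, P) = g_P(X \oplus S)/g_P(X)$ for $S \in M$. Specializing: if $[d]R' = R$, then $\e(\gamma(R')\ominus R', P)$ relates to how $\gamma$ acts on $g_P(R')$, which by a divisor computation is a $d$-th root of $t_P$ evaluated along $R$, and similarly for $t_Q$. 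Carrying this through, one finds $b$ (the coefficient of $Q$) is represented by $t_P$ evaluated at the relevant point and $a$ by $t_Q$ evaluated at the relevant point. For generic $R \neq 0, P, Q$ the functions $t_P, t_Q$ have no zeros or poles at $R$, so $t_Q(R), t_P(R)$ are honest elements of $k^\times$ and the answer is $(t_Q(R), t_P(R))$. The cases $R = P$ and $R = Q$ require care precisely because $t_P$ has a zero at $P$ and $t_Q$ has a zero at $Q$; there one must replace the naive evaluation by a ratio (a ``derivative-free'' normalization) — e.g. for $R = P$ one uses $t_Q(P)$ (well-defined and nonzero) for the first slot and the ratio $t_P(P\oplus Q)/t_P(Q)$ for the second, which is exactly the value of a suitably translated/rescaled function that avoids the zero of $t_P$ at $P$. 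The case $R = 0$ is immediate since $R' \in M$ and $\gamma(R')\ominus R' = 0$, giving the trivial class $(1,1)$.

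Concretely I would proceed in the following steps. First, record the formula $\delta(R)(\gamma) = \gamma(R') \ominus R'$ and the two Weil-pairing identities above that extract the $P$- and $Q$-coefficients. Second, treat $R = 0$ trivially. Third, for generic $R$, compute $\e(\gamma(R')\ominus R', P)$ and $\e(\gamma(R')\ominus R', Q)$ by pulling back through $[d]$ and using the function-field description of the Weil pairing in terms of $g_P, g_Q$; this identifies the pairings with $\gamma\bigl(\sqrt[d]{t_Q(R)}\bigr)/\sqrt[d]{t_Q(R)}$ and $\gamma\bigl(\sqrt[d]{t_P(R)}\bigr)/\sqrt[d]{t_P(R)}$ respectively, yielding $\phi^{-1}\circ\delta(R) = (t_Q(R), t_P(R))$. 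Fourth, for $R = P$ and $R = Q$, redo the same computation but observe that the relevant $d$-th root must be taken of a function that is regular and nonvanishing at $P$ (resp. $Q$): one checks that $t_Q$ is already fine at $P$ while $t_P$ must be replaced by $t_P(\cdot \oplus Q)/t_P(\cdot)$ evaluated at $P$, i.e. $t_P(P\oplus Q)/t_P(Q)$, and symmetrically for $R = Q$. The main obstacle is exactly this bookkeeping at $R = P$ and $R = Q$: one has to argue carefully that the particular ratios appearing in the statement are the correct $d$-th power classes — that they differ from the ``wrong'' naive value $t_P(P)$ (which is $0$) by exactly the right regularization, using that $\divisor(t_P \circ \tau_Q / t_P)$ and $\divisor(t_P)$ interact so that the quotient is a unit at the points in question, together with the compatibility of the Weil pairing with translation. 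Everything else is a routine unwinding of definitions.
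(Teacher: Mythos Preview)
Your proposal is correct and follows essentially the same route as the paper: compute $\delta(R)(\gamma)=\gamma(R')\ominus R'$, extract the $P$- and $Q$-coefficients via the Weil pairing, and identify these with Galois ratios of $d$-th roots of $t_Q(R)$ and $t_P(R)$ using $g_P,g_Q\in k(E)$ with $g_P^d=[d]^*t_P$, $g_Q^d=[d]^*t_Q$; this is exactly the paper's argument for generic $R$, which in turn is the standard Kummer-pairing computation from \cite[Ch.~X, Theorem~1.1]{silverman}. For the degenerate cases $R=P$ and $R=Q$ the paper simply writes ``the other results follow by linearity of the Weil pairing,'' meaning one uses $\delta(P)=\delta(P\oplus Q)-\delta(Q)$ together with the generic formula at $P\oplus Q$ and the already-well-defined slot at $Q$; your translation/regularization description via $t_P(\cdot\oplus Q)/t_P(\cdot)$ is just a function-theoretic rephrasing of that same linearity, so there is no substantive difference.
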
	

The proof of this proposition is inspired by a computation of the Kummer pairing in \cite[Ch. X, Theorem 1.1]{silverman}.  
\begin{proof} 
	Let $R \in E(k)/dE(k)$ and suppose that $R \neq 0,P,Q$. Fix some $S \in E(\kbar)$ with $[d]S = R$. Let $t_P, t_Q$ as above and fix $g_P, g_Q \in \kbar(E)$ with $g_P^d = t_P \circ [d]$ and $g_Q^d = t_Q \circ [d]$. Since the divisors of $g_P$ and $g_Q$ are $G_k$-invariant, we may choose $g_P, g_Q \in k(E)$.
	By the definition of $\phi$ we see that $\phi(f)=(a,b)$ for some cocycle $f$ representing a class in $ H^1(k,M)$ means exactly that 
	$\e\left(f(\gamma),P\right) = \frac{\gamma\left( \sqrt[d]{b}\right) }{\sqrt[d]{b}} \text{ and } \e\left(f(\gamma),Q\right) = \frac{\gamma\left( \sqrt[d]{a}\right) }{\sqrt[d]{a}}.$
	The Weil pairing satisfies 
	$$\e(\gamma(S) \ominus S,P ) = \frac{ g_P(\gamma(S) \ominus S \oplus S)}{g_P(S)}=  \frac{ g_P(\gamma(S))}{g_P(S)} =\frac{\gamma( g_P(S))}{g_P(S)}.$$
	Additionally by definition of $g_P$, we see that $g_P(S)^d = t_P \circ [d] (S) = t_P(R)$. A similar result holds for $Q$ as well. Therefore $\phi^{-1}\circ \delta(R) = 
	\left( t_Q (R), t_P(R) \right)$. The other results follow by linearity of the Weil pairing. 
\end{proof} 

Summarizing these results, we conclude \Cref{Prop:main result in split case}. 

\section{$M$ is not $k$-rational}\label{Mnotkrational}

Now let $k$ be any field and assume that $d=q$ is an odd prime coprime to the characteristic of $k$. Assume that $k$ contains a primitive $q$-th root of unity $\rho$. Let $E$ be an elliptic curve over $k$ and denote its $q$-torsion by $M$. We do not assume that $M$ is $k$-rational throughout this section. Consider the Galois representation 
$$\Psi: G_k \rightarrow GL_2(\mathbb{F}_q)$$ given by the action of $G_k$ on $M$ and denote the fixed field of its kernel by $L$. Since $k$ contains a primitive $q$-th root of unity $\rho$, the image of $\Psi$ lies in $SL_2(\mathbb{F}_q)$: For $\sigma \in G_k$ with $\Psi(\sigma) = \begin{pmatrix} a&b\\c&d \end{pmatrix}$, we have that 
$$\begin{gathered}
\rho = \sigma(\rho) = \sigma \e (P,Q) = \e \left(\sigma(P), \sigma(Q) \right) = \e \left( aP \oplus cQ, bP \oplus dQ \right) \\
= \e(P,Q)^{ad-bc} = \rho^{ad-bc}.\end{gathered}$$
Therefore $ad-bc=1$ and $\Psi(\sigma) \in SL_2(\mathbb{F}_q)$. 

Consider the tower of field extensions 
$$\xymatrix{
	& \overline{k(E)}\ar@{-}[d]\\
	& L(E) \ar@{-}[ld]_M \ar@{-}[rd]^{\Gal(L/k)}\\
	[q]^*L(E)\ar@{-}[rd]^{\Gal(L/k)} & & k(E) \ar@{-}[ld]\\
	& [q]^* k(E)
}$$
Fix a set $\tilde{G}_{L/k} \subset G_{k(E)}$ of coset representatives of $G_{k(E)} / G_{L(E)} \cong \Gal(L/k)$. Note that $\tilde{G}_{L/k}$ is also a set of coset representatives of 
$G_{[q]^*k(E)} / G_{[q]^*L(E)} \cong \Gal(L/k)$ and every $\tilde{\sigma} \in \tilde{G}_{L/k}$ fixes $k(E)$. Let $\gamma \in G_{[q]^* k(E)}$, then $\gamma$ decomposes as $\gamma= \gamma' \tilde{\sigma}$ for some $\gamma' \in G_{[q]^*L(E)}$ and some $\tilde{\sigma} \in \tilde{G}_{L/k}$. \\

Let $\mathcal{T}$ be the torsor given by multiplication by $q$ on $E$. Fix a set of coset representatives $\tilde{G}_{L/k}$ as before. We now describe the cocycle corresponding to the pullback of $\mathcal{T}$ along the generic point using the correspondence in  \Cref{ch:background}.
Let $x_1$ be a $\overline{k(E)}$ point. We may assume, that $x_0 = \iota_1 \circ x_0$ for some $x_0$ as in the following commutative diagram. 
$$
\xymatrix{
	k(E) \ar[r]^{\iota_1}\ar@/^3pc/[drr]^{x_1} & L(E) \ar[rd]^{x_0} \\ 
	k(E) \ar[u]^{[q]^*} \ar[r]^{\iota_1}  &L(E) \ar[u]^{[q]^*} \ar[r]^\iota & \overline{k(E)} }.
$$
Any $\gamma = \gamma' \tilde{\sigma} \in G_{k(E)}$ with $\gamma' \in G_{L(E)}, \tilde{\sigma} \in \tilde{G}_{L/k}$ acts on $x_1$ by 
$$\gamma.x_1 = \gamma' \circ \tilde{\sigma} x_0 \circ \iota_1 =  \gamma'. x_1.$$
Finally, $\gamma'.x_1$ can be computed as in \Cref{Prop:torsor mult by 3 over kbar}.
Summarizing this we conclude the following proposition. 

\begin{prop}\label{prop:torsormult3}
	The pull-back of $\mathcal{T}$ to the generic point corresponds to the element in $H^1(k(E),M)$ given by the cocycle 
	$$G_{k(E)} \ra M: \gamma \mapsto \left[\frac{\gamma'(\alpha_Q)}{\alpha_Q}\right]_\rho P \ominus \left[ \frac{\gamma'(\alpha_P)}{\alpha_P}\right]_\rho Q,$$
	where $\gamma = \gamma' \tilde{\sigma}$ as above, for some $\tilde{\sigma} \in \tilde{G}_{L/k}$ and $\gamma' \in G_{L(E)}$, $\alpha_P, \alpha_Q\in \overline{k(E)}$ so that $\alpha_P^q = t_P$ and $\alpha_Q^q = t_Q$.  
\end{prop}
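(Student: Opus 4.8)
The statement to prove is \cref{prop:torsormult3}: the pull-back of the torsor $\mathcal{T}$ (multiplication by $q$ on $E$) along the generic point corresponds to the cocycle $\gamma \mapsto \left[\frac{\gamma'(\alpha_Q)}{\alpha_Q}\right]_\rho P - \left[\frac{\gamma'(\alpha_P)}{\alpha_P}\right]_\rho Q$ where $\gamma = \gamma'\tilde\sigma$ with $\tilde\sigma \in \tilde G_{L/k}$ and $\gamma' \in G_{L(E)}$.

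The plan is to reduce this to the $k$-rational case already handled in \cref{Prop:torsor mult by 3 over kbar}. Since $M$ is $L$-rational, that proposition (applied with base field $L$ in place of $k$) describes the pull-back of $\mathcal{T}$ along $\Spec L(E) \to E_L$ by exactly the cocycle formula $\gamma' \mapsto \left[\frac{\gamma'(\alpha_Q)}{\alpha_Q}\right]_\rho P - \left[\frac{\gamma'(\alpha_P)}{\alpha_P}\right]_\rho Q$ on $G_{L(E)}$. So the content of the present proposition is to extend this cocycle from $G_{L(E)}$ to all of $G_{k(E)}$ in the way dictated by choosing a base point $x_1$ that factors through the $L(E)$-base point $x_0$, i.e. to track how the cocycle behaves on coset representatives $\tilde\sigma$.

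The key steps, in order: First, fix the tower of fields as drawn in the excerpt and fix the coset representatives $\tilde G_{L/k} \subset G_{k(E)}$; observe that these also represent $G_{[q]^*k(E)}/G_{[q]^*L(E)}$ and that each $\tilde\sigma$ fixes $k(E)$ pointwise. Second, choose the $\overline{k(E)}$-point $x_1$ of the pull-back torsor to be $\iota_1 \circ x_0$ where $x_0$ is the $L(E)$-base point used in \cref{Prop:torsor mult by 3 over kbar} — this is legitimate because $x_1$ just needs to be any $\overline{k(E)}$-algebra map satisfying $x_1 \circ [q]^* = \iota \circ \iota_1$, and the composite does. Third, compute the action: for $\gamma = \gamma'\tilde\sigma$ with $\gamma' \in G_{L(E)}$, since $\tilde\sigma$ fixes $k(E)$ and $x_1 = \iota_1 \circ x_0$ was built so that $\tilde\sigma \circ x_1 = x_1$ as maps out of $k(E)$ (the point $x_1$ only sees $k(E)$, not $L(E)$, in its source), one gets $\gamma . x_1 = \gamma' . x_1$. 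Hence the cocycle value at $\gamma$ equals the cocycle value at $\gamma'$, which by \cref{Prop:torsor mult by 3 over kbar} (over $L$) is the asserted formula. Fourth, check this is a well-defined cocycle independent of the choice of decomposition $\gamma = \gamma'\tilde\sigma$ — this is automatic from the torsor/cohomology correspondence but worth a sentence.

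The main obstacle is getting the base-point bookkeeping exactly right: one must verify carefully that the chosen $x_1$ really does land in the pull-back torsor (the commuting square $x_1 \circ [q]^* = \iota\circ\iota_1$ must hold) and that the formula $\gamma.x_1 = \gamma'.x_1$ is correct, i.e. that the $\tilde\sigma$-part contributes nothing because the source of $x_1$ is $k(E)$ which $\tilde\sigma$ fixes. A subtlety is that the decomposition $\gamma = \gamma'\tilde\sigma$ is not canonical, so one should note that although $\gamma'$ depends on the choice of coset representatives, the value $\left[\frac{\gamma'(\alpha_Q)}{\alpha_Q}\right]_\rho P - \left[\frac{\gamma'(\alpha_P)}{\alpha_P}\right]_\rho Q$ does not — because changing representatives multiplies $\gamma'$ by an element of $G_{L(E)}$ on the right and conjugates appropriately, and the formula is insensitive to this; alternatively, this independence is forced by the fact that the cocycle class is intrinsically attached to the torsor. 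Once the base-point choice is pinned down, the rest is a direct transcription of \cref{Prop:torsor mult by 3 over kbar}.
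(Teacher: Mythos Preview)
Your proposal is essentially the paper's own argument: factor the base point as $x_1 = x_0 \circ \iota_1$ (you wrote $\iota_1 \circ x_0$, but the diagram in the paper shows the composition in the other order), observe that the $\tilde\sigma$-part of $\gamma$ acts trivially on $x_1$ so that $\gamma.x_1 = \gamma'.x_1$, and then invoke \cref{Prop:torsor mult by 3 over kbar} over $L$ for the $G_{L(E)}$-action. One small caution: your parenthetical justification ``the point $x_1$ only sees $k(E)$ in its source'' is not quite the reason, since $\tilde\sigma$ acts on the \emph{target} $\overline{k(E)}$; the paper is equally terse here and simply records the identity $\gamma.x_1 = \gamma' \circ \tilde\sigma \circ x_0 \circ \iota_1 = \gamma'.x_1$, so your level of detail matches.
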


As in the previous section, we describe the map $\epsilon$ explicitly. 

\begin{prop}On the level of cocycles $\epsilon$ coincides with the map that assigns to a 1-cocycle $f: G_k \rightarrow M$ the 2-cocycle 
	\begin{equation*}
	\begin{gathered}  
	\epsilon(f): G_{k(E)} \times G_{k(E)} \ra \overline{k(E)}^\times\\
	(\gamma,\tau) \mapsto \e\left( f(\gamma), \gamma \left( \left[\frac{\tau'(\alpha_Q)}{\alpha_Q}\right]_\rho P \ominus \left[\frac{\tau'(\alpha_P)}{\alpha_P}\right]_\rho  Q \right)  \right),
	\end{gathered} 
	\end{equation*} 
	for $\gamma,\tau \in G_{k(E)}, \tau = \tau' \tilde{\sigma}$ for some $\sigma \in \tilde{G}_{L/k}$ and $\tau' \in G_{\kbar(E)}$. 
\end{prop}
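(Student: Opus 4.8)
The plan is to mirror the argument of \cref{Prop:epsilonk}, modifying only the input describing the pullback of the torsor $\mathcal{T}$ to the generic point. Recall that in the $k$-rational case, the proof proceeded by writing down the diagram with commutative squares relating $\H^i_{\text{\et}}(E,-)$ and $\H^i_{\text{\et}}(k(E),-)$ via $\eta^*$, using that cup-products commute with $\eta^*$ by \cite[Chapter 2, 8.2]{Bredon:Sheaf-Theory}, and then combining the explicit description of $\eta^*\mathcal{T}$ with the standard formula for the cup-product of cocycles in group cohomology \cite[Chapter VIII, Section 3]{serrelocal}. The only place where $k$-rationality of $M$ entered was in the description of $\eta^*\mathcal{T}$ (\cref{Prop:torsor mult by 3 over kbar}); that description has now been replaced by \cref{prop:torsormult3}, where the cocycle is the same formula but with $\gamma$ replaced by its $G_{L(E)}$-component $\gamma'$ under the coset decomposition $\gamma = \gamma'\tilde\sigma$.

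First I would recall the commutative diagram
$$
\xymatrix{
	H^1(k,M) \ar[r]^{p^*} & H^1_{\text{\et}}(E,M) \ar[d]^{\eta^*} \ar[rr]^-{-\cup [\mathcal{T}]} && H^2_{\text{\et}}(E,M \otimes M) \ar[r]^-\e_-\sim \ar[d]^{\eta^*}	& H^2_{\text{\et}}(E,\mu_q)\ar[d] \\
	& H^1_{\text{\et}}(k(E),M) \ar[rr]^-{-\cup [\eta^*\mathcal{T}]} && H^2_{\text{\et}}(k(E), M \otimes M ) \ar[r]^-\e_-\sim & \tor[q][\Br k(E)]
}
$$
exactly as in the proof of \cref{Prop:epsilonk}, noting that its commutativity is formal (compatibility of cup-product with pullback, plus functoriality of the Weil-pairing map $\e$) and does not use any rationality hypothesis. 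Chasing a $1$-cocycle $f\colon G_k\to M$ around this diagram, its image in $H^1_{\text{\et}}(k(E),M)$ is the inflation of $f$ along $G_{k(E)}\twoheadrightarrow G_k$, and cupping with the class of $\eta^*\mathcal{T}$ from \cref{prop:torsormult3} and applying the cup-product formula in group cohomology \cite[Chapter VIII, Section 3]{serrelocal} yields
$$(\gamma,\tau)\mapsto \e\!\left(f(\gamma),\ \gamma\!\left(\left[\tfrac{\tau'(\alpha_Q)}{\alpha_Q}\right]_\rho P - \left[\tfrac{\tau'(\alpha_P)}{\alpha_P}\right]_\rho Q\right)\right),$$
which is precisely the asserted $2$-cocycle. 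As in the split case, the rightmost vertical map is the injection of \cite[Theorem 5.11]{colliot-thelene-sansuc:therationalityproblem} identifying $\Br(E)$ with the unramified part of $\Br k(E)$, so the identity of cocycle classes in $H^2_{\text{\et}}(k(E),\mu_q)$ gives the identity in $\tor[q][\Br(E)]$.

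The only genuinely new point, and the step I expect to require the most care, is checking that the coset-decomposition formula in \cref{prop:torsormult3} is in fact well-defined as a cocycle and that plugging it into the cup-product formula gives a bona fide $2$-cocycle independent of the choice of coset representatives $\tilde G_{L/k}$ and of the base point $x_1$. Concretely, one must verify that changing $\tilde\sigma$ within its $G_{L(E)}$-coset, and changing $x_1$, alter the representing $1$-cocycle only by a coboundary, and then that the cup-product of $f$ (inflated from $G_k$) with this class is likewise well-defined; since $f$ factors through $G_k$ while the torsor cocycle genuinely depends on the $G_{L(E)}$-component, one should confirm that the pairing $\e$ is insensitive to the remaining ambiguity — this is where the fact that $\e$ lands in $\mu_q$ and that $M$ is $q$-torsion does the work. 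Once this bookkeeping is in place, the remainder is a direct transcription of \cref{Prop:epsilonk}.
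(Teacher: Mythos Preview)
Your proposal is correct and matches the paper's approach: the paper gives no separate proof for this proposition, treating it as an immediate consequence of the same diagram chase as in \cref{Prop:epsilonk} with the torsor description from \cref{prop:torsormult3} substituted for \cref{Prop:torsor mult by 3 over kbar}. Your extra paragraph on checking well-definedness under change of coset representatives and base point is additional care the paper omits, but it does not change the argument.
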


\subsection{$[L:k]$ is coprime to $q$ }\label{sec:Lkcoprimetoq}

Suppose throughout this section that $q$ does not divide the order $[L:k]$. 
Let $\mathcal{T}$ be the torsor given by multiplication by $d$ on $E$. Consider the pull-back $\eta_L^*(\mathcal{T})$ of $\mathcal{T}$ to $L(E)$ 
and the pull-back $\eta_k^*(\mathcal{T})$ of $\mathcal{T}$ to $k(E)$. By \Cref{Prop:torsor mult by 3 over kbar} and \Cref{prop:torsormult3} we see immediately that $\res\left( \eta^*_k (\mathcal{T}) \right) = \eta^*_L (\mathcal{T})$. By \cite[Ch. 1, Proposition 1.5.3 (iii) and (iv)]{Neukirch-Schmidt-Wingberg-Cohomology-of-number-fields} and the construction of $\epsilon$ the following diagram commutes
$$\xymatrix{
	H^1(k,M) \ar[r]^\res \ar[d]^{\epsilon_k} & H^1(L,M) \ar[r]^\Cor \ar[d]^{\epsilon_L} & H^1(k,M) \ar[d]^{\epsilon_k}\\
	\tor[q][\Br(E)] \ar[r]^\res & \tor[q][\Br(E \otimes L)] \ar[r]^\Cor & \tor[q][\Br(E)]
}.$$
The corestriction maps in the above diagram are both surjective, so that  every element in $I$ can be written as $\Cor(A)$ with $A \in I_L$, where $I_L$ is the image of the map $\epsilon_L$. We summarize this observation in the following theorem. 
\begin{thm}
	Let $t_P,t_Q \in L(E)$ with divisors  $\divisor(t_P) = q(Q) - q(0)$ and $\divisor(t_Q) = q(Q)- q(0)$. Then the $q$-torsion of $\Br(E)$ decomposes as 
	$$\tor[q][\Br(E)] =\tor[q][\Br(k)] \oplus I$$ and every element in $I$ can be represented as a tensor product 
	$$\Cor\left( a, t_P \right)_{q,L(E)} \otimes \Cor\left( b, t_Q \right)_{q,L(E)}$$ with $a,b \in L^\times$. 
\end{thm}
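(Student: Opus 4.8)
The theorem follows by combining the commutative diagram just established with the structural results of the previous section (\cref{Prop:main result in split case}), so the proof will be short and essentially a bookkeeping exercise. First I would invoke \cref{Prop:main result in split case} applied to the elliptic curve $E \otimes_k L$ over the field $L$: since $M$ is $L$-rational by construction of $L$, and $L$ contains the primitive $q$-th root of unity $\rho$ (as $\rho \in k \subseteq L$), the hypotheses are met. This gives the decomposition $\tor[q][\Br(E\otimes L)] = \tor[q][\Br(L)] \oplus I_L$ with every element of $I_L$ representable as $(a,t_P)_{q,L(E)} \otimes (b,t_Q)_{q,L(E)}$ for $a,b \in L^\times$, where $t_P, t_Q \in L(E)$ have the stated divisors. (Note the typo in the statement: the divisor of $t_P$ should read $q(P)-q(0)$, not $q(Q)-q(0)$.)

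Next I would record the left-split of sequence (\ref{Eq:exact sequence on tor[d]Br(E)}) over $k$: the zero-section $s: \Spec k \to E$ induces $s^*: \Br(E) \to \Br(k)$ splitting $i$, hence $\tor[q][\Br(E)] = \tor[q][\Br(k)] \oplus I$ with $I \cong \tor[q][H^1(k, E(\kbar))]$. This part is already in the introduction and requires no new argument. The substantive step is surjectivity of $\Cor : I_L \to I$. For this I would use the commutative diagram displayed immediately before the theorem, together with the standard fact \cite[Ch.~1, Prop.~1.5.3]{Neukirch-Schmidt-Wingberg-Cohomology-of-number-fields} that $\Cor \circ \res = [L:k]$ on $H^i(k, -)$, and the hypothesis that $q \nmid [L:k]$, so that multiplication by $[L:k]$ is an automorphism of the $q$-torsion group $I$. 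Chasing the diagram: given $\alpha \in I$, the element $\beta = \frac{1}{[L:k]} \res(\alpha) \in I_L$ satisfies $\Cor(\beta) = \alpha$ (here $\frac{1}{[L:k]}$ means the inverse of $[L:k]$ modulo $q$, acting on the $q$-torsion). Since every element of $I_L$ is a tensor product of symbols $(a,t_P)_{q,L(E)} \otimes (b,t_Q)_{q,L(E)}$ and $\Cor$ is additive, every element of $I$ is a corestriction $\Cor\bigl((a,t_P)_{q,L(E)}\bigr) \otimes \Cor\bigl((b,t_Q)_{q,L(E)}\bigr)$, which is the claimed form.

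One point that needs a word of care is why the two vertical maps $\epsilon_k$ in the displayed diagram land in $\tor[q][\Br(E)]$ rather than merely $\tor[q][\Br(E\otimes L)]$ on the far right, and why the diagram commutes at all; this is exactly what \cite[Ch.~1, Prop.~1.5.3 (iii), (iv)]{Neukirch-Schmidt-Wingberg-Cohomology-of-number-fields} and the explicit naturality of the cup-product under $\res$ and $\Cor$ give us — restriction and corestriction are compatible with cup products, and $\epsilon$ is built out of a cup product with (the pullback of) the torsor $\mathcal{T}$, which is defined over $k$ and whose pullback to $L(E)$ is the restriction of its pullback to $k(E)$ (noted just above via \cref{Prop:torsor mult by 3 over kbar} and \cref{prop:torsormult3}). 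So the only genuine input beyond formal nonsense is the observation that $q \nmid [L:k]$ makes $[L:k]$ invertible on the $q$-torsion; everything else is diagram-chasing. I do not anticipate a real obstacle — the main thing to be careful about is correctly tracking that the tensor-product (rather than single-symbol) form in \cref{Prop:main result in split case} is preserved by the additive map $\Cor$, which it is.
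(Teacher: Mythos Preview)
Your proposal is correct and follows essentially the same argument as the paper: both use the commutative diagram relating $\epsilon_k$ and $\epsilon_L$ via $\res$ and $\Cor$, invoke \cite[Ch.~1, Prop.~1.5.3]{Neukirch-Schmidt-Wingberg-Cohomology-of-number-fields} for compatibility of the cup product with restriction and corestriction, and then use that $\Cor\circ\res=[L:k]$ is invertible on $q$-torsion to deduce surjectivity of $\Cor$ onto $I$, after which the symbol form follows from \cref{Prop:main result in split case} applied over $L$. You also correctly caught the typo in the divisor of $t_P$.
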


\begin{rem}\label{rem:image of restriction} The corestriction map is in general not injective. To get a smaller set, remark that by \cite[Ch, 1, Corollary 1.5.7]{Neukirch-Schmidt-Wingberg-Cohomology-of-number-fields} the image of the restriction map $H^1(k,M) \ra H^1(L,M)$ coincides with the image of the norm map $$N_{L/k}: H^1(L,M) \ra H^1(L,M).$$ Now by Kummer theory $H^1(L,M) \cong L^\times/(L^\times)^q \times L^\times/(L^\times)^q$ via the isomorphism $\phi$. Let $g \in G_k$ and $(a,b) \in L^\times/(L^\times)^q \times L^\times/(L^\times)^q$. Suppose that $g^{-1}(P) = c_1 P \oplus c_2 Q$ and $g^{-1}(Q) = c_3P \oplus c_4Q$. The action of $g$ compatible with $\phi$ is  
	\begin{align*}
	g.(a,b) =& \phi^{-1} g. \phi(a,b) \\
	=& \left( \left(  g^{-1}(a) \right)^{c_1} \left( g^{-1}(b) \right)^{c_3}, \left(  g^{-1}(a) \right)^{c_2} \left( g^{-1}(b) \right)^{c_4}\right) .
	\end{align*}
	Now the image of the restriction followed by $\phi$ coincides with the image of the norm on $L^\times/(L^\times)^q \times L^\times/(L^\times)^q$ under the above action. 
\end{rem} 
To calculate the relations consider the following commutative diagram 
$$\xymatrix{
	E(k)/[q]E(k) \ar[r]^{\res} \ar[d]^{\delta_k}& E(L)/[q]E(L) \ar[r]^{\Cor} \ar[d]^{\delta_L}& E(k)/[q]E(k) \ar[d]^{\delta_k}\\
	H^1(k,M) \ar[r]^{\res} & H^1(L,M) \ar[r]^{\Cor} & H^1(k,M) 
},$$where the horizontal compositions coincide with multiplication by $[L:k]$, which is an isomorphism. Thus, the image of $\delta_k$ is also given by the image of the composition $\delta_k \circ \Cor = \Cor \circ \delta_L$. Using the description of the image of $\delta_L$ in the previous section we deduce the following result. 

\begin{prop} 
	Suppose that $[L:k]$ is not divisible by $q$. Fix two generators $P$ and $Q$ of $M$ and and let $t_P, t_Q \in L(E)$ with $\divisor(t_P) = q(P) - q(0)$ and $\divisor(t_Q) = q(Q) - q(0)$. Assume additionally that $t_P \circ [q], t_Q \circ[q] \in \left( L(E)^\times \right)^d.$ An element 
	$$\Cor(a,t_P)_{L(E)} \otimes \Cor(b,t_Q)_{L(E)}$$in $I$ is trivial if $a$ and $b$ satisfy one of the following in $L^\times/(L^\times)^q$:
	\begin{itemize}
	    \item $a \equiv 1$ and $b\equiv 1$,
		\item $a \equiv t_Q(P)$ and $b\equiv \frac{t_P(P\oplus Q)}{t_P(Q)}$,
		\item $a \equiv \frac{ t_Q(P \oplus Q)}{t_Q(P)}$ and $b= t_P(Q)$, or  
		\item $a \equiv t_Q (R)$ and $b\equiv t_P(R)$ for some $R \in E(k)\setminus\{0,P,Q\}$. 
	\end{itemize}
	Remark that there may be additional relations in $I$, as the corestriction map is not injective.
\end{prop} 

The following observation will be useful to calculate these corestrictions explicitly. Consider the following commutative diagram
\begin{equation}\small\label{eqn:relationscorrescorres} 
\xymatrix{
	E(k) \ar[r] \ar[d]^{\delta_k}& E(L) \ar[r] \ar[d]^{\delta_L} & E(k) \ar[r] \ar[d]^{\delta_k}& E(L) \ar[r] \ar[d]^{\delta_L} & E(k) \ar[d]^{\delta_k}\\
	H^1(k,M) \ar[r]^\res \ar[d]^{\epsilon_k}& H^1(L,M) \ar[r]^\Cor \ar[d]^{\epsilon_L}& H^1(k,M)\textbf{} \ar[r]^\res\ar[d]^{\epsilon_k}  & H^1(L,M) \ar[r]^\Cor \ar[d]^{\epsilon_L}& H^1(k,M) \ar[d]^{\epsilon_k}\\
	\tor[q][\Br E] \ar[r] & \tor[q][\Br E_L] \ar[r] &  \tor[q][\Br E] \ar[r] & \tor[q][\Br E_L] \ar[r] &  \tor[q][\Br E]
},\end{equation} 
where the rows compose to multiplication by $[L:k]^2$, which is an isomorphism. Furthermore, the composition $\res \circ \Cor$ coincides with the norm map \cite[Ch. 1, Corollary 1.5.7]{Neukirch-Schmidt-Wingberg-Cohomology-of-number-fields}. Therefore, an element in $I$ is trivial if it lies in the image of the composition $\Cor\circ \epsilon_L \circ N_{L/k} \circ \delta_L \circ \res$. In \Cref{subsec:Ex degree L/k coprime to 3}, we see how this observation can be applied to the calculation of the relations in $I$. 

\subsection{$[L:k]$ equals $q$}\label{sec:Generators-div-q} 

Suppose for this section that $L$ is of degree $q$ over $k$. After renaming $P$ and $Q$ we may assume without loss of generality that there is some $\sigma \in G_k$ such that $\sigma(Q) = P\oplus Q$ and $\overline{\sigma}$ generates $G_k/G_L$. Replacing $P$ by $nP$ and $Q$ by $nQ$ for $n = \left[\e(Q,P)\right]_\rho$, we may still assume that $e(P,Q) = \rho$. Fix a coset representative $\tilde{\sigma}$ of $\sigma$ in $G_{k(E)}$. 
Additionally denote a primitive element for the extension $L/k$ by $l$.
Consider the diagram
$$\xymatrix{ 0 \ar[r]& H^1\left(G_k/G_L, M\right) \ar[r]^-{\inflation}& H^1(k,M)\ar[rr]^-\res\ar[d]^{\epsilon_k} && H^1(L,M)^{G_k/G_L}\ar[d]^{\epsilon_L}\\
	& & \tor[q][\Br(E)]\ar[rr]^-\res && \tor[q][\Br(E \times_k \Spec L)] },$$
where the first row is the inflation-restriction exact sequence. The diagram commutes by construction of $\epsilon$ and since the restriction map and the cup-product commute \cite[Ch. 1 Proposition 1.5.3 (iii)]{Neukirch-Schmidt-Wingberg-Cohomology-of-number-fields}. We will first describe the image of the inflation map and explore the restriction afterwards. We will apply the following technical lemma throughout. 

\begin{lem}\label{lem:technical}
	$\sum_{i=0}^{q-1} \sigma^i(R) = 0$ for every $R \in M$.
\end{lem}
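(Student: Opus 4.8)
The claim is that $\sum_{i=0}^{q-1}\sigma^i(R)=0$ for every $R\in M$, where $\sigma\in G_k$ is chosen (after renaming $P,Q$) so that $\sigma(P)=P$ and $\sigma(Q)=P\oplus Q$, and $\overline{\sigma}$ generates $G_k/G_L\cong\Z/q\Z$. The first thing I would do is make the matrix of $\sigma$ acting on $M$ fully explicit in the basis $\{P,Q\}$: from $\sigma(P)=P$ and $\sigma(Q)=P\oplus Q$ we get
\begin{equation*}
\Psi(\sigma)=\begin{pmatrix}1&1\\0&1\end{pmatrix}\in SL_2(\mathbb{F}_q).
\end{equation*}
Then $\sum_{i=0}^{q-1}\sigma^i(R)$ is nothing but the action on $R$ of the matrix sum $\sum_{i=0}^{q-1}\Psi(\sigma)^i=\sum_{i=0}^{q-1}\begin{pmatrix}1&i\\0&1\end{pmatrix}$ (since $\Psi(\sigma)^i=\begin{pmatrix}1&i\\0&1\end{pmatrix}$, which one verifies by the usual induction on $i$).

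Next I would simply compute that matrix sum entrywise over $\mathbb{F}_q$: the $(1,1)$ and $(2,2)$ entries give $\sum_{i=0}^{q-1}1=q\equiv 0$, the $(2,1)$ entry is $0$, and the $(1,2)$ entry is $\sum_{i=0}^{q-1}i=\tfrac{q(q-1)}{2}$. Since $q$ is odd, $\tfrac{q(q-1)}{2}$ is an integer divisible by $q$, hence $\equiv 0$ in $\mathbb{F}_q$. Therefore $\sum_{i=0}^{q-1}\Psi(\sigma)^i=0$ in $M_2(\mathbb{F}_q)$, and applying this zero endomorphism to any $R\in M$ gives $\sum_{i=0}^{q-1}\sigma^i(R)=0$. (One should note here that the sum over the group orbit is taken with the group law $\oplus$ on $E$, which corresponds exactly to the additive structure on $M\cong(\Z/q\Z)^2$, so the translation into matrix language is legitimate.)

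An alternative, basis-free route — which I might mention as a remark — is to observe that $N:=\sum_{i=0}^{q-1}\sigma^i$ is the norm element of the cyclic group $\langle\overline{\sigma}\rangle$ of order $q$ acting on $M$; since $(\Psi(\sigma)-I)^2=0$ one gets $\Psi(\sigma)^i=I+i(\Psi(\sigma)-I)$ and hence $N=qI+\binom{q}{2}(\Psi(\sigma)-I)=0$ in $\mathrm{End}(M)$ because $q$ kills $M$ and $\binom{q}{2}\equiv 0\pmod q$ for odd $q$.

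\textbf{Main obstacle.} There is essentially no obstacle: the only subtlety is the bookkeeping that $\tfrac{q(q-1)}{2}\equiv 0\pmod q$, which is exactly where the hypothesis that $q$ is an \emph{odd} prime enters (for $q=2$ the statement would fail, since $\binom{2}{2}=1\not\equiv 0\pmod 2$). So the ``hard part'' is merely to record carefully that oddness of $q$ is used, and to be precise that the orbit sum is with respect to $\oplus$.
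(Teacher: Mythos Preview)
Your proposal is correct and is essentially the same direct computation as the paper's proof: the paper writes $R=mP\oplus nQ$, computes $\sigma^i(R)=mP\oplus inP\oplus nQ$, and sums to obtain $mqP\oplus\tfrac{q(q-1)}{2}nP\oplus nqQ=0$, which is exactly your matrix calculation written coordinatewise. Your observation that oddness of $q$ is needed for $\tfrac{q(q-1)}{2}\equiv 0\pmod q$ is the same point implicit in the paper's final equality.
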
 

\begin{proof} 
	Let $R= mP\oplus nQ \in M$. We calculate directly that
	$$
	\sum_{i=0}^{q-1} \sigma^i(mP \oplus nQ) =  \sum_{i=0}^{q-1} \left( mP \oplus inP\oplus nQ \right) = mqP \oplus \frac{q (q-1)}2 n P \oplus n q Q = 0. \qedhere
	$$  
\end{proof}

\begin{lem} The group
	$H^1\left( G_k/G_L, M \right)$ is cyclic of rank $q$ with generator the class of the cocycle $f_L$ defined by $f_L(\overline{\sigma}) = Q$. 
\end{lem}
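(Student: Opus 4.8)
The plan is to compute the group cohomology $H^1(G_k/G_L, M)$ directly, using that $G_k/G_L \cong \Gal(L/k)$ is cyclic of order $q$ generated by $\overline{\sigma}$, where $\sigma(P) = P$ and $\sigma(Q) = P \oplus Q$. Since $M \cong (\mathbb{Z}/q\mathbb{Z})^2$ is finite of order prime to... no — here $q$ \emph{does} divide $|M|$, so we cannot invoke triviality of the cohomology. Instead I would use the standard description of the cohomology of a finite cyclic group $C = \langle \overline{\sigma}\rangle$ of order $q$ acting on a module $M$: writing $N = \sum_{i=0}^{q-1}\sigma^i$ for the norm element, one has $H^1(C, M) = \ker(N)/(\sigma - 1)M$. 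So the computation reduces to identifying $\ker(N)$ and $(\sigma-1)M$ inside $M$.

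First I would record that $N$ acts as $0$ on all of $M$: this is exactly \cref{lem:technical}, which says $\sum_{i=0}^{q-1}\sigma^i(R) = 0$ for every $R \in M$. Hence $\ker(N) = M$. Next I would compute the image of $\sigma - 1$: since $\sigma(P) = P$ we get $(\sigma-1)P = 0$, and since $\sigma(Q) = P\oplus Q$ we get $(\sigma - 1)Q = P$. Therefore $(\sigma-1)M$ is the cyclic subgroup generated by $P$, which has order $q$. It follows that $H^1(C,M) = M/\langle P\rangle$, which is cyclic of order $q$, generated by the image of $Q$.

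Finally I would translate this back to the cocycle description. A $1$-cocycle on a cyclic group $C = \langle\overline\sigma\rangle$ of order $q$ is determined by its value $c(\overline\sigma) \in M$, subject to the cocycle condition $N(c(\overline\sigma)) = 0$ (automatically satisfied here by \cref{lem:technical}), and two such are cohomologous precisely when their values differ by an element of $(\sigma-1)M = \langle P\rangle$. So the class of the cocycle $f_L$ with $f_L(\overline\sigma) = Q$ generates $H^1(G_k/G_L, M)$, and its order is the order of $Q$ in $M/\langle P\rangle$, which is $q$. This gives precisely the claimed statement that $H^1(G_k/G_L, M)$ is cyclic of order $q$ with generator the class of $f_L$.

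I do not expect any serious obstacle here; the only point requiring a little care is the bookkeeping identifying a $1$-cocycle on a finite cyclic group with its value on the generator and checking that the coboundary relation is exactly quotienting by $(\sigma-1)M$ — this is the classical periodicity/Herbrand-quotient computation for cyclic groups, and \cref{lem:technical} supplies the one input (vanishing of the norm) that is specific to our module $M$. One should also note "rank $q$" in the statement means cyclic of order $q$; I would phrase the conclusion as "$H^1(G_k/G_L,M)$ is cyclic of order $q$" to avoid ambiguity, but either way the proof is the same.
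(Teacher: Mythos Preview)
Your proposal is correct and takes essentially the same approach as the paper: both use \cref{lem:technical} to show the norm vanishes (so $f_L$ is a cocycle and $\ker N = M$), and both identify the coboundaries with the subgroup $\langle P\rangle = (\sigma-1)M$, concluding that $H^1$ is cyclic of order $q$ generated by the class of $f_L$. The only difference is packaging: you invoke the standard formula $H^1(C,M) = \ker N/(\sigma-1)M$ for cyclic groups, while the paper works directly with cocycles, explicitly subtracting the coboundary of $mP$ to reduce any cocycle value $mP \oplus nQ$ to $nQ = f_L^n(\overline{\sigma})$.
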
 

\begin{proof} 
	\Cref{lem:technical} implies that  $f_L(\overline{\sigma}^q) = \sum_{i=0}^{q-1} \sigma^i f_L(\overline{\sigma}) = 0$ and thus $f_L$ defines a cocycle. Since $G_k/G_L$ is cyclic with generator $\overline{\sigma}$, every element $f$ in $H^1(G_k,G_L,M)$ is determined by $f(\overline{\sigma})$. Furthermore, if $f(\overline{\sigma}) = mP\oplus nQ$, then 
	\begin{align*}
	f(\overline{\sigma}) - \overline{\sigma}(mP) = mP \oplus nQ \ominus mP =  nQ = f_L^n (\overline{\sigma}).
	\end{align*} 
	The statement follows. 
\end{proof} 
Let $\alpha_Q \in \overline{k(E)}$ with $\alpha_Q^q = t_Q$ and consider $n_Q = \prod_{i=0}^{q-1} \tilde{\sigma}^i \left(\alpha_Q\right)$. Fix $\gamma \in G_{L(E)}$. By our previous calculations and with $x_0$ and $g_Q$ as in the proof of \Cref{prop:torsormult3} we deduce that there is some $R \in M$ such that $\gamma(\alpha_Q) = R.x_0(g_Q)$. Then 
by \Cref{lem:technical} 
\begin{align}
\gamma \left( \prod_{i=0}^{q-1} \tilde{\sigma}^i \left(\alpha_Q\right) \right)  = \left( \sum_{i=0}^{p-1} \sigma^i(R) \right). x_0(g_Q) = \alpha_Q.
\end{align} Now $\prod_{i=0}^{q-1} \tilde{\sigma}^i \left(\alpha_Q \right) $ is obviously fixed by $\tilde{\sigma}$ and therefore $\prod_{i=0}^{q-1} \tilde{\sigma}^i \left(\alpha_Q\right) \in k(E).$ Thus $n_Q$ is defined to be the element in $k(E)$ with $n_Q^q = N_{L(E)/k(E)} (t_Q)$. Note additionally that $\divisor n_Q = \sum_{i=0}^{q-1} \left(\sigma^i(Q) - (0)\right)$.

\begin{prop}\label{prop:ImageofInflation}
	$\epsilon_k\left(\inflation\left(f_L\right)\right)$ is the inverse of the Brauer class of the symbol algebra
	$\left(l^q, n_Q\right)_{q,k(E)},$ where $\alpha_Q \in \overline{k(E)}$ with $\alpha_Q^q = t_Q$.
\end{prop}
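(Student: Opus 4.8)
The plan is to compute $\epsilon_k(\inflation(f_L))$ directly on the level of cocycles using the explicit formula for $\epsilon$ established in the previous proposition of this section, and then to match the resulting 2-cocycle in $H^2(G_{k(E)}, \overline{k(E)}^\times)$ against the standard cocycle for the symbol algebra $(l^q, n_Q)_{q,k(E)}$ given in \cref{eqn:cocyclesymbolalgebra} (or the Weil-pairing version in \cref{prop:sumbolalgebra-cocycle}). First I would unwind the inflation: $\inflation(f_L)$ is the 1-cocycle $G_k \to M$ sending $\gamma \mapsto f_L(\overline{\gamma}) = [\overline{\gamma} : \overline{\sigma}]\, Q$ in the additive notation, where $[\overline{\gamma}:\overline{\sigma}] \in \{0,\dots,q-1\}$ records which power of $\overline{\sigma}$ the coset $\overline{\gamma}$ equals in $G_k/G_L$. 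Since $L = k(l)$ with $l^q \in k$ (as $k$ contains $\rho$ and $[L:k]=q$; if $L/k$ is not already of this Kummer form one replaces $l$ by a suitable element — this needs a line of justification), we can write $[\overline{\gamma}:\overline{\sigma}] = \left[\frac{\gamma(l)}{l}\right]_\rho$ after normalizing $\tilde{\sigma}$ so that $\tilde\sigma(l)/l = \rho$. Then plugging into the formula from the preceding proposition, $\epsilon_k(\inflation(f_L))$ is represented by
$$(\gamma,\tau) \mapsto \e\!\left( \left[\tfrac{\gamma(l)}{l}\right]_\rho Q,\ \gamma\!\left( \left[\tfrac{\tau'(\alpha_Q)}{\alpha_Q}\right]_\rho P - \left[\tfrac{\tau'(\alpha_P)}{\alpha_P}\right]_\rho Q\right)\right).$$

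Next I would simplify this. By bilinearity and antisymmetry of the Weil pairing, the $P$-component of the second argument contributes $\e(Q,P)^{\bullet} = \rho^{-\bullet}$ while the $Q$-component pairs trivially with $Q$; so only the term involving $\alpha_P$... wait — I should be careful: it is $\e(Q, \gamma(\cdots P\cdots))$ that survives. Since $\gamma$ fixes $M$ pointwise only when $\gamma \in G_L$, I need to track $\gamma(P)$; but here the relevant computation is of a 2-cocycle up to coboundary, and I expect that after subtracting an explicit trivial cocycle (a coboundary $dg$ built from $\alpha_Q$ and $l$, exactly in the style of the proof of \cref{prop:sumbolalgebra-cocycle}) the expression collapses to the symbol cocycle for $(l^q, N_{L(E)/k(E)}(t_Q))_{q,k(E)} = (l^q, n_Q^q)_{q,k(E)}$. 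Using $(a, c^q)_{q,F} \sim (a,c)_{q,F}^q$... no — rather $(l^q, n_Q^q)$ is trivial, so the correct target must already be $(l^q, n_Q)$; the $q$-th powers must cancel against the powers $[\gamma(l)/l]_\rho \in \{0,\dots,q-1\}$ that appear. Concretely, I would show the cocycle equals $\gamma \mapsto \alpha_Q^{-q \cdot [\gamma(l)/l]_\rho}$-type expression (a coboundary away from $n_Q$), invoking that $n_Q = \prod_i \tilde\sigma^i(\alpha_Q)$ and the identity $\gamma(n_Q) = n_Q$ for $\gamma \in G_{L(E)}$ established just before the statement, together with \cref{lem:technical}, to reorganize $\prod_i \tilde\sigma^i(\alpha_Q)$ under the decomposition $\gamma = \gamma'\tilde\sigma^j$. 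Comparing with \cref{eqn:cocyclesymbolalgebra} for $(l^q, n_Q)$ then gives the result, and the sign (inverse Brauer class) comes from the $\e(Q,P) = \rho^{-1}$ versus $\e(P,Q)=\rho$ discrepancy, matching the $(-1)$-exponent convention in \cref{prop:sumbolalgebra-cocycle}.

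The main obstacle I anticipate is the bookkeeping in the non-abelian-looking middle argument $\gamma(\cdots)$: because $\gamma$ does not fix $M$, the inner translate $\gamma\big([\tau'(\alpha_Q)/\alpha_Q]_\rho P - [\tau'(\alpha_P)/\alpha_P]_\rho Q\big)$ genuinely depends on $\Psi(\gamma) \in SL_2(\mathbb F_q)$, and one must verify that all the $\Psi(\gamma)$-dependent cross terms either cancel by antisymmetry of $\e$ or assemble into an honest coboundary. Handling this cleanly will likely require choosing the coset representatives $\tilde\sigma^i$ and the roots $\alpha_P,\alpha_Q$ compatibly (as in the setup paragraph preceding \cref{prop:torsormult3}) so that the torsor cocycle from \cref{prop:torsormult3} is as simple as possible, and then reducing everything to the cyclic group $G_k/G_L = \langle\overline\sigma\rangle$ where \cref{lem:technical} does the summing. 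A secondary subtlety is confirming that $n_Q$ — a priori only defined up to $(k(E)^\times)^q$ via $n_Q^q = N_{L(E)/k(E)}(t_Q)$ — can be pinned down precisely as $\prod_{i=0}^{q-1}\tilde\sigma^i(\alpha_Q)$ independent of the choice of $\tilde\sigma$, which is exactly the content of the displayed identity $\gamma(\prod_i \tilde\sigma^i(\alpha_Q)) = \alpha_Q$ for $\gamma \in G_{L(E)}$ proved just above; I would cite that and the divisor computation $\divisor(n_Q) = \sum_{i=0}^{q-1}(\sigma^i(Q)-(0))$ to close the argument.
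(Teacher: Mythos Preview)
Your overall strategy---compute $\epsilon_k(\inflation(f_L))$ via the explicit cocycle formula, subtract an explicit coboundary built from products of $\tilde\sigma^i(\alpha_Q)$, and match against the symbol cocycle of \cref{prop:sumbolalgebra-cocycle}---is exactly the paper's approach. But there is one concrete error that would derail the computation as written.

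You assert $\inflation(f_L)(\gamma) = [\overline\gamma:\overline\sigma]\,Q$, i.e.\ $f_L(\overline\sigma^i) = iQ$. This is wrong: since $\sigma$ acts nontrivially on $M$ via $\sigma(Q) = P\oplus Q$, the cocycle condition forces
\[
f_L(\overline\sigma^i) \;=\; \sum_{m=0}^{i-1}\sigma^m f_L(\overline\sigma) \;=\; \sum_{m=0}^{i-1}(mP\oplus Q) \;=\; \tfrac{(i-1)i}{2}P \oplus iQ.
\]
Indeed the assignment $\gamma'\tilde\sigma^i \mapsto iQ$ is not a $1$-cocycle at all (check $\gamma=\tau=\sigma$), so plugging it into $\epsilon$ does not even produce a $2$-cocycle. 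The missing $\tfrac{(i-1)i}{2}P$ component is precisely what, after pairing via $\e$ against the $\sigma^i$-translate of the torsor cocycle, produces the factor $\bigl(\tau'(\alpha_P)/\alpha_P\bigr)^{(i+1)i/2}$ appearing in the paper's computation; and that factor is exactly what the coboundary $dg$, with
\[
g(\gamma'\tilde\sigma^i) \;=\; \gamma'\Bigl(\prod_{n=0}^{i-1}\tilde\sigma^n(\alpha_Q)\Bigr),
\]
is designed to absorb, leaving the clean symbol cocycle for $(l^q,n_Q)$. Once you correct $f_L(\overline\sigma^i)$, your outline---including the identification $n_Q = \prod_i \tilde\sigma^i(\alpha_Q)$ and the source of the inverse sign via $\e(Q,P)=\rho^{-1}$---matches the paper's proof essentially line for line.
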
 

\begin{proof} Let $\gamma, \tau \in G_{k(E)}$ and denote $\gamma = \gamma' \tilde{\sigma}^i, \tau = \tau' \tilde{\sigma}^j$ with $\gamma',\tau' \in G_{L(E)}$. Then by definition of $\epsilon$ we see that  
	\begin{align*} 
	\epsilon_k \left( \inflation\left( f_L \right) \right) (\gamma,\tau) 
	&=\e \left( \frac{(i-1) i }{2} P \oplus iQ , \sigma^i \left( \left[\frac{ \tau'\left( \alpha_Q \right)}{\alpha_Q}\right]_\rho P \ominus \left[\frac{ \tau' \left( \alpha_P \right)}{\alpha_P}\right]_\rho Q \right) \right)  \\
	&= \left( \frac{ \tau' \left( \alpha_P \right)}{\alpha_P} \right)^{- \frac{(i-1)i}{2}} \left(\frac{  \tau'(\alpha_Q)}{\alpha_Q} \right)^{-i} \left( \frac{ \tau' \left( \alpha_P \right)}{\alpha_P} \right)^{i^2}\\
	&= \left( \frac{ \tau' \left( \alpha_P \right)}{\alpha_P} \right)^{ \frac{(i+1)i}{2}} \left(\frac{  \tau'(\alpha_Q)}{\alpha_Q} \right)^{-i}\end{align*} 
	Now consider the map 
	\begin{equation}
	g: G_{k(E)} \rightarrow \overline{k(E)}^\times : \gamma \mapsto \gamma' \left( \prod_{n=0}^{i-1} \tilde{\sigma}^n(\alpha_Q) \right),
	\end{equation}
	where $\gamma= \gamma' \tilde{\sigma}^i$ for some $\gamma' \in G_{L(E)}$. The differential of $g$ is 
	$$dg(\gamma,\tau) = 
	\begin{cases} 
	\frac{ \prod_{n=0}^{i-1} \tilde{\sigma}^n(\alpha_Q) }{ \tilde{\sigma}^i \tau' \tilde{\sigma}^{-i} \left( \prod_{n=0}^{i-1} \tilde{\sigma}^n(\alpha_Q) \right)} & \text{ if } i + j < q\\
	\left(  \prod_{n=0}^{q-1} \tilde{\sigma}^n(\alpha_Q)\right) \left( \frac{ \alpha_P}{\tau'(\alpha_P)} \right)^{-\frac{(i+1)i}{2}}  \left(  \frac{ \alpha_Q}{\tau'(\alpha_Q)}\right)^i & \text{ else}
	\end{cases}. $$
	The statement follows by subtracting this trivial cocycle and applying \Cref{prop:sumbolalgebra-cocycle}.
\end{proof} 

\begin{prop} 
	$\epsilon$ induces a splitting of sequence (\ref{Eq:exact sequence on tor[d]Br(E)}).
\end{prop}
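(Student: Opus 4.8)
The goal is to show that the map $\epsilon$ (in the present situation, $[L:k]=q$) induces a splitting of the sequence $(\ref{Eq:exact sequence on tor[d]Br(E)})$, which — just as in \cref{Mkrational} — amounts to verifying the two identities $r\circ\epsilon = \lambda$ and $\epsilon(\ker\lambda)=0$. The plan is to reduce everything to the $k$-rational case treated in \cref{Mkrational} by base change to $L$, using the compatibility of $\epsilon$, $r$, and $\lambda$ with restriction and inflation, together with the inflation–restriction diagram displayed just above \cref{lem:technical}.

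\emph{Step 1: $r\circ\epsilon=\lambda$.} First I would record that $r$ and $\lambda$ are compatible with restriction: the Faddeev–Lichtenbaum description of $r$ via the residue/transfer exact sequences is functorial under $\Spec L(E)\to\Spec k(E)$, and $\lambda$ comes from the Kummer sequence, which base-changes. Since $\epsilon$ is compatible with restriction (the diagram above \cref{lem:technical} commutes, as noted there), and since $r_L\circ\epsilon_L=\lambda_L$ is already established in \cref{prop:kappcircepsilon=lambda_split} over $L$ (where $M$ is $L$-rational), we get $\res(r\circ\epsilon(f)) = r_L\circ\epsilon_L(\res f) = \lambda_L(\res f) = \res(\lambda(f))$ for every $f\in H^1(k,M)$. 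It remains to control the kernel of $\res\colon \tor[q][H^1(k,E(\kbar))]\to\tor[q][H^1(L,E(\kbar))]$, equivalently of $\res\colon H^1(k,M)\to H^1(L,M)$ modulo the part already matched: this kernel is $H^1(G_k/G_L,M)$, which by the lemma above is cyclic generated by $f_L$. So I only have to check $r(\epsilon_k(\inflation f_L))=\lambda(\inflation f_L)$ directly. By \cref{prop:ImageofInflation}, $\epsilon_k(\inflation f_L)$ is (the inverse of) the explicit symbol algebra $(l^q,n_Q)_{q,k(E)}$; I would compute its image under $r$ exactly as in the proof of \cref{prop:kappcircepsilon=lambda_split} — pass to $H^2(G_k,\Prin\overline E)$, the divisor of $n_Q$ being $\sum_{i=0}^{q-1}(\sigma^i(Q))-(0))$, and chase the snake-lemma connecting maps — and compare with the Kummer cocycle $\lambda(\inflation f_L)$ computed from $f_L(\overline\sigma)=Q$; these should agree because $\inflation f_L$ represents the class of $Q$ and $n_Q$ is the norm-compatible function attached to $t_Q$.

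\emph{Step 2: $\epsilon(\ker\lambda)=0$.} Here $\ker\lambda=\Image\delta$, so take $R\in E(k)$ and consider $\epsilon\circ\delta(R)$. By Step 1, $r(\epsilon\circ\delta(R))=\lambda\circ\delta(R)=0$, so $\epsilon\circ\delta(R)$ lies in the image of $\Br(k)\to\Br(E)$; it therefore suffices to show its specialization at the zero section $0$ is trivial. Exactly as in \cref{prop:epsilonkerlambda=0}, the cup product commutes with specialization at a closed point (\cite[Ch.\ 2, 8.2]{Bredon:Sheaf-Theory}), so $(\epsilon\circ\delta(R))_0 = \delta(R)\cup\mathcal T_0$, and $\mathcal T_0$ is the trivial torsor since it has the rational point $0$. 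Hence $\epsilon\circ\delta(R)=0$. This argument is insensitive to whether $M$ is $k$-rational, so it carries over verbatim.

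\emph{Main obstacle.} The routine parts (Step 2, and the restriction-compatibility bookkeeping in Step 1) go through essentially as before; the real work is the explicit verification in Step 1 that $r$ sends the class $-[(l^q,n_Q)_{q,k(E)}]$ to $\lambda(\inflation f_L)$. The subtlety is that $n_Q$ is only a norm-$q$-th-root of $t_Q$ rather than a genuine $q$-th root over $k(E)$, so the snake-lemma computation of the residue of the symbol $(l^q,n_Q)$ must be carried out with the twisted cocycle coming from $\tilde\sigma$ acting on $\alpha_Q$ (as in the proof of \cref{prop:ImageofInflation}), and one must see the factor $l^q$ disappears upon passing to principal divisors while the $\divisor(n_Q)=\sum_i(\sigma^i(Q))-q(0)$ contribution reassembles into the class of $Q\in M$ — matching $f_L(\overline\sigma)=Q$ — in $H^1(k,E(\kbar))$. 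Once this single identity is checked, combining it with the $L$-rational case via the commutative inflation–restriction diagram completes the proof that $\epsilon$ splits $(\ref{Eq:exact sequence on tor[d]Br(E)})$.
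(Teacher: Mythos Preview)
Your proposal is correct and follows essentially the same approach as the paper: reduce $r\circ\epsilon=\lambda$ to the single generator $\inflation(f_L)$ via restriction to $L$ and the already-established $L$-rational case, then verify that identity by the explicit snake-lemma chase in $H^2(G_k,\Prin\overline E)$ using $\divisor(n_Q)=\sum_i((\sigma^i Q)-(0))$, which is exactly the computation the paper carries out. The paper is terser --- it simply asserts ``it suffices to show $r\circ\epsilon\circ\inflation(f_L)=\lambda\circ\inflation(f_L)$'' without spelling out your restriction-compatibility bookkeeping, and it omits your Step~2 entirely (the specialization argument of \cref{prop:epsilonkerlambda=0} indeed carries over verbatim, as you note) --- but the substantive content is the same.
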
 

\begin{proof}
	It suffices to show that $r \circ \epsilon \circ \inflation (f_L) = \lambda \circ \inflation (f_L)$. 
	By the previous lemma, $\epsilon(\inflation(f_L))$ gives
	$$(\gamma'\sigma^i,\tau'\sigma^j) \mapsto \begin{cases} 1 & \\ i+j < q \\
	\sum_{i=0}^{q-1} \left(\sigma(Q) - (0)\right) & i+j \geq q \end{cases}$$
	in $H^2(G_k, \Prin(E))$, where $\gamma',\tau' \in G_k$. On the other hand, $r(\inflation(f_L))$ can be presented by the cocycle 
	$$\gamma' {\sigma}^i \mapsto \sum_{m=0}^i {\sigma}^m(Q) \ominus 0.$$
	This lifts to the map
	$$\gamma' {\sigma}^i \mapsto \sum_{m=0}^i \left( \sigma^m(Q)\right)  - (0) \in \Div^0(\overline{E}),$$
	and a direct computation of the boundary map gives 
	\begin{align*} 
	(\gamma' {\sigma}^i,\tau'{\sigma}^j) \mapsto &
	\begin{cases}  \sigma^i \left( \sum_{m=0}^j \left( \sigma^m(Q)\right)  - (0)\right) - \left( \sum_{m=0}^{i+j} \left( \sigma^m(Q)\right)  - (0)\right) \\
	+ \left( \sum_{m=0}^i \left( \sigma^m(Q)\right)  - (0)\right) & i+ j < q \\
	\sigma^i \left( \sum_{m=0}^j \left( \sigma^m(Q)\right)  - (0)\right) - \left( \sum_{m=0}^{i+j-q} \left( \sigma^m(Q)\right)  - (0)\right) \\
	+ \left( \sum_{m=0}^i \left( \sigma^m(Q)\right)  - (0)\right) & i+ j \geq q \end{cases} \\
	&=  \begin{cases} 1 & i+ j < q \\
	\left( \sum_{m=0}^{q-1} \left( \sigma^m(Q)\right)  - (0)\right) & i+ j \geq q \end{cases},
	\end{align*}
	which coincides with the previous calculation.
\end{proof}

We now calculate the image of the composition $\phi^{-1} \circ \res$ with $\phi$ as in (\ref{Eq:phi definition}). By \cite[Chapter VII, Section 5]{serrelocal} the action of $G_k$ on $L^\times/(L^\times)^q \times L^\times/(L^\times)^q$ compatible with $\phi$ is given by 
$$\sigma.(a,b) = \phi^{-1} \sigma. \phi(a,b) = \left( \frac{\sigma^{-1}(a)}{ \sigma^{-1}(b)} , \sigma^{-1}(b) \right).$$  

\begin{lem}\label{lem:fixedSet}
	Under the isomorphism $\phi^{-1}$ the fixed set $H^1(L,M)^{G_k/G_L}$ corresponds to 
	$$\left\{ \left( a, \frac{a}{\sigma(a)} \right) : \sigma(a)^2 \equiv \sigma^2(a) a \mod (L^\times)^q \right\}.$$
\end{lem}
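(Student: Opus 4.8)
The plan is to translate the $G_k/G_L$-invariance condition on a cohomology class into an explicit congruence between the Kummer data, using the formula for the Galois action on $L^\times/(L^\times)^q \times L^\times/(L^\times)^q$ recorded just before the lemma, namely $\sigma.(a,b) = \left( \frac{\sigma^{-1}(a)}{\sigma^{-1}(b)}, \sigma^{-1}(b)\right)$. Since $G_k/G_L$ is cyclic generated by $\overline{\sigma}$, a pair $(a,b)$ is fixed precisely when $\sigma.(a,b) = (a,b)$ in $L^\times/(L^\times)^q \times L^\times/(L^\times)^q$. First I would rewrite this single equation as the pair of congruences $\sigma^{-1}(b) \equiv b$ and $\sigma^{-1}(a) \equiv ab$ modulo $q$-th powers, or equivalently (applying $\sigma$) $b \equiv \sigma(b)$ and $a \equiv \sigma(a)\sigma(b)$.

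Next I would solve this system. From the second congruence $\sigma(b) \equiv a/\sigma(a)$, so $b$ is determined modulo $q$-th powers by $a$ once we fix a choice; writing $b = a/\sigma(a)$ gives the shape $\left(a, \frac{a}{\sigma(a)}\right)$ appearing in the statement. It then remains to identify which $a \in L^\times/(L^\times)^q$ actually arise, i.e. to impose the compatibility condition. Substituting $b = a/\sigma(a)$ back into the first congruence $b \equiv \sigma(b)$ yields $\frac{a}{\sigma(a)} \equiv \frac{\sigma(a)}{\sigma^2(a)}$, i.e. $a\,\sigma^2(a) \equiv \sigma(a)^2 \pmod{(L^\times)^q}$, which is exactly the stated constraint $\sigma(a)^2 \equiv \sigma^2(a)\,a \bmod (L^\times)^q$. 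Conversely, I would check that any $a$ satisfying this congruence produces, via $b = a/\sigma(a)$, a genuinely $\sigma$-fixed pair: the two congruences above are symmetric in the roles played, so the verification is a short back-substitution. I should also note that $\overline{\sigma}$ generating the quotient means fixed by $\sigma$ is the same as fixed by all of $G_k/G_L$, so no further conditions are needed.

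One point requiring a little care is well-definedness: the element $\frac{a}{\sigma(a)}$ only depends on the class of $a$ modulo $(L^\times)^q$, and the congruence $\sigma(a)^2 \equiv \sigma^2(a) a$ likewise only depends on that class, so the description of the set is consistent; I would state this explicitly. I also want to be careful that the description is as a subset of $L^\times/(L^\times)^q \times L^\times/(L^\times)^q$ — every pair in $H^1(L,M)^{G_k/G_L}$ has a representative of the form $\left(a, \frac{a}{\sigma(a)}\right)$, but the same class in the fixed set may have several such representatives, and that is fine since we are only asserting an equality of subsets under $\phi^{-1}$.

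I do not expect a serious obstacle here: the content is entirely the bookkeeping of unwinding the cocycle action through Kummer theory, and the only mildly delicate step is making sure the normalization $b = a/\sigma(a)$ (rather than some $q$-th power twist) is the correct representative and that the resulting congruence on $a$ is stated in the right direction. The main risk is a sign or exponent slip in passing between $\sigma$ and $\sigma^{-1}$; I would guard against it by consistently applying $\sigma$ to clear inverses before comparing, and by sanity-checking on the trivial class $a = b = 1$, which manifestly satisfies $\sigma(1)^2 = 1 = \sigma^2(1)\cdot 1$.
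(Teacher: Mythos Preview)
Your proposal is correct and follows essentially the same route as the paper: set $\sigma.(a,b)=(a,b)$, read off $b\equiv a/\sigma(a)$ from the first component and then the congruence $\sigma(a)^2\equiv \sigma^2(a)\,a$ from the second, with the converse a trivial back-substitution. The only cosmetic difference is that the paper derives $b\equiv a/\sigma(a)$ directly from the first-component equation (via $\sigma^{-1}(b)\equiv \sigma^{-1}(a)/a$, then apply $\sigma$), whereas you first obtain $\sigma(b)\equiv a/\sigma(a)$ and then tacitly use $b\equiv\sigma(b)$; you might make that small step explicit to avoid the appearance of a slip.
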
 

\begin{proof} 
	Let $(a,b) \in L^\times/(L^\times)^q \times L^\times/(L^\times)^q$ be fixed by the above action. Then 
	$a \equiv \frac{\sigma^{-1}(a)}{ \sigma^{-1}(b)}$, which implies that $b \equiv \frac{a}{\sigma(a)}$. Now $b \equiv \sigma^{-1}(b)$ and thus 
	$ \frac{a}{\sigma(a)} \equiv \frac{ \sigma^{-1}(a)}{a}$ which implies that $a^2 \equiv \sigma(a) \sigma^{-1}(a)$ or equivalently $\sigma(a)^2 \equiv \sigma^2(a) a$. 
\end{proof} 

\begin{lem}\label{lem:ImageRes1} 
	$f \in H^1\left( L,M\right)^{G_k/G_L}$ is in the image of the restriction map if and only if 
	$f(\gamma^q) = 0$ for any $\gamma \in G_k$. 
\end{lem}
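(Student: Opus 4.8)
**Proof proposal for Lemma (``$f \in H^1(L,M)^{G_k/G_L}$ is in the image of the restriction map if and only if $f(\gamma^q)=0$ for all $\gamma\in G_k$'').**

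The plan is to analyze the inflation-restriction exact sequence
$0 \to H^1(G_k/G_L, M) \xrightarrow{\inflation} H^1(k,M) \xrightarrow{\res} H^1(L,M)^{G_k/G_L} \xrightarrow{\mathrm{tg}} H^2(G_k/G_L, M)$
together with the explicit description of the transgression map $\mathrm{tg}$. Since $G_k/G_L$ is cyclic of order $q$ generated by $\overline{\sigma}$, the group $H^2(G_k/G_L, M)$ is computed by the periodicity of cyclic cohomology: it is $M^{G_k/G_L}$ modulo the norm element $N = \sum_{i=0}^{q-1}\sigma^i$. By \cref{lem:technical}, $N$ acts as $0$ on $M$, so $H^2(G_k/G_L, M) \cong \hat{H}^0(G_k/G_L,M) = M^{G_k/G_L}$ (note $M^{G_k/G_L}$ may or may not be trivial depending on the representation; the argument does not need it to vanish). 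Exactness then says $f \in H^1(L,M)^{G_k/G_L}$ lies in the image of $\res$ if and only if $\mathrm{tg}(f) = 0$ in $H^2(G_k/G_L,M)$.

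Next I would make the transgression explicit in terms of $f$. A cocycle $f: G_L \to M$ that is $G_k/G_L$-invariant extends (as a function, not necessarily a cocycle) to $G_k$; choosing the coset representatives $\{\tilde{\sigma}^i : 0 \le i \le q-1\}$ and lifting $f$ to $\tilde{f}: G_k \to M$, the transgression class is represented by the $2$-cocycle measuring the failure of $\tilde{f}$ to be a cocycle on the quotient, which — because the only relation in $G_k/G_L$ is $\overline{\sigma}^q = 1$ — is pinned down by the single value obtained by evaluating $f$ around the loop $\overline{\sigma}^q$. Concretely, the transgression of $f$ in $\hat{H}^0$ is the class of $\sum_{i=0}^{q-1}\sigma^i\bigl(f(\gamma_\sigma)\bigr)$ for an appropriate choice, which by the cocycle relation equals $f(\gamma^q)$ for $\gamma$ a lift of $\overline{\sigma}$; and for a general $\gamma \in G_k$, writing $\gamma = \gamma' \tilde{\sigma}^i$ with $\gamma' \in G_L$, the cocycle relation together with $G_k/G_L$-invariance of $f$ reduces $f(\gamma^q)$ to the same quantity up to a coboundary. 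So $\mathrm{tg}(f) = 0$ precisely when $f(\gamma^q) = 0$ for all $\gamma \in G_k$ (equivalently for one lift of $\overline{\sigma}$, the others following formally). This gives both implications at once.

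The main obstacle I anticipate is getting the explicit formula for the transgression map correct, including keeping track of signs and the identification $H^2(G_k/G_L,M) \cong \hat H^0(G_k/G_L,M)$ via cup product with a generator of $H^2(\Z/q\Z,\Z)$; in particular one must verify carefully that ``$f(\gamma^q) = 0$ for all $\gamma \in G_k$'' is genuinely equivalent to the vanishing of the class of $f(\tilde\sigma^q)$ in $\hat H^0$, rather than merely to $f(\tilde\sigma^q)$ being in the norm submodule — which is automatic here since the norm is zero by \cref{lem:technical}, so the distinction collapses, but this point should be stated. A cleaner alternative, which I would fall back on if the transgression bookkeeping becomes unwieldy, is to argue directly: if $f = \res(g)$ for $g \in H^1(k,M)$, then $f(\gamma^q) = g(\gamma^q) = \sum_{i=0}^{q-1}\gamma^i(g(\gamma)) = 0$ because $\gamma^q \in G_L$ maps to $0$ under... no — rather, because $\sum \gamma^i$ acts as the norm $N$, which kills $M$ by \cref{lem:technical} once we reduce mod $G_L$; conversely, if $f(\gamma^q)=0$ for all $\gamma$, extend $f$ to a function on $G_k$ by the coset representatives and check directly, using the vanishing hypothesis and \cref{lem:technical}, that the extension is a genuine cocycle, hence $f$ is a restriction. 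I expect the second, hands-on approach to be the one that actually goes into the paper, with the spectral-sequence picture relegated to a remark.
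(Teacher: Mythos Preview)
Your proposal is correct, and your prediction is accurate: the paper takes exactly the hands-on route you sketch at the end. For the forward direction the paper writes $f(\gamma^q)=g(\gamma^q)=\sum_{i=0}^{q-1}\gamma^i g(\gamma)$ and kills it with \cref{lem:technical}; for the converse it defines $g(\gamma)=f(\gamma')$ where $\gamma=\gamma'\tilde\sigma^i$ and checks the cocycle relation directly. Your transgression argument via the identification $H^2(G_k/G_L,M)\cong \hat H^0(G_k/G_L,M)=M^{G_k/G_L}$ (using that the norm vanishes by \cref{lem:technical}) is a valid and more conceptual alternative that the paper does not pursue; it has the advantage of explaining \emph{why} the single condition $f(\tilde\sigma^q)=0$ suffices, whereas the paper's bare-hands verification leaves that somewhat implicit.
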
 

\begin{proof}
	Let $\gamma \in G_k$ and suppose that $f$ is in the image of the restriction map with preimage $g$. Then we can write $\gamma = \gamma' \sigma^i$ for some $\gamma' \in G_L$. We calculate directly using \Cref{lem:technical} that
	$$f(\gamma^q) = g(\gamma^q) = \sum_{i=0}^{q-1} \gamma^q g(\gamma) = \sum_{i=0}^{q-1} \sigma^{iq} g(\gamma) = \sum_{i=0}^{q-1} \sigma^i  g(\gamma) = 0.$$
	For the converse assume that $f$ satisfies the condition that $f(\gamma^q) = 0$ for any $\gamma \in G_k$. In particular $f(\sigma^q) = 0$. Define $g \in H^1(k,M)$ by setting $g(\gamma) = f(\gamma')$, where $\gamma = \gamma' \sigma^i$ for $\gamma' \in G_L$. This is well-defined as for any $\gamma,\tau\in G_k$ with $\gamma = \gamma' \sigma^i, \tau = \tau' \sigma^j$ and $\gamma',\tau' \in G_L$ we have that 
	$
	g\left( \gamma\tau\right) = g\left( \gamma' \left( \sigma^i \tau' \sigma^{-i}\right)  \sigma^{i+j}\right)  
	= g\left( \gamma'\right) g\left( \sigma^i \tau' \sigma^{-i}\right)
	= g\left( \gamma' \right) \sigma^i g(\tau') 
	= g\left( \gamma \right) \gamma g(\tau).
	$\end{proof} 

We will now prove a technical lemma that will be useful to determine the image of the restriction. 

\begin{lem}\label{lem:technical-lemma-Galois-theory}
	Let $k$ be a field of characteristic prime to $q$ containing a primitive $q$-th root of unity. Let $k \subset L \subset F$ be a tower of field extensions so that each extension is Galois of degree $q$. Let $a \in L^\times$ such that $F = L\left( \sqrt[q]{a}\right)$. Fix a representative $\sigma \in G_k$ that generates $\Gal(L/k)$. Suppose that for every $\gamma \in G_k$ we have that $\gamma^q \left( \sqrt[q]{\sigma^i(a)}\right)= \sqrt[q]{\sigma^i(a)}$ for $0 \leq i < q$. Then there exists some $b \in k^\times$ such that $a \equiv b \mod (L^\times)^q$. 	
\end{lem}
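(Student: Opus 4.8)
The plan is to work with the Kummer-theoretic description of the tower $k \subset L \subset F$ and translate the hypothesis on Galois action into a congruence modulo $(L^\times)^q$. Write $G = \Gal(F/k)$; since $[F:L] = [L:k] = q$ and both are Galois over $k$ (hence over the intermediate field), $G$ is a group of order $q^2$, so it is abelian, in fact either $\Z/q^2\Z$ or $(\Z/q\Z)^2$. Fix $\tau \in G$ generating $\Gal(F/L)$, and lift $\sigma$ to an element of $G$ (still called $\sigma$). The key cocycle is $c: G \to \mu_q$, $c(g) = g(\sqrt[q]{a})/\sqrt[q]{a}$, which is a homomorphism on $\Gal(F/L)$ but only a cocycle on all of $G$ because $\sigma$ moves $a$. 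The hypothesis $\gamma^q(\sqrt[q]{\sigma^i(a)}) = \sqrt[q]{\sigma^i(a)}$ for all $\gamma \in G_k$ and all $0 \le i < q$ says precisely that every $q$-th power in $G$ fixes the compositum $L(\sqrt[q]{a}, \sqrt[q]{\sigma(a)}, \dots) = F$ (using that $F/L$ is degree $q$, so all these radicals generate the same extension, i.e. $\sigma^i(a) \equiv a^{n_i} u_i^q$ for units $u_i \in L^\times$ and $n_i$ coprime to $q$). Thus $G^q$, the subgroup generated by $q$-th powers, is trivial on $F$, which forces $G \cong (\Z/q\Z)^2$ rather than cyclic of order $q^2$.

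Once $G \cong (\Z/q\Z)^2$, the extension $F/k$ is abelian of exponent $q$, so by Kummer theory $F = k(\sqrt[q]{b_1}, \sqrt[q]{b_2})$ for suitable $b_1, b_2 \in k^\times$, and $L$ corresponds to a $1$-dimensional $\mathbb{F}_q$-subspace of $\langle b_1, b_2\rangle \subset k^\times/(k^\times)^q$; after adjusting we may take $L = k(\sqrt[q]{b})$ for some $b \in k^\times$. Then I would argue that $F = L(\sqrt[q]{b})$ as well — more precisely, $F = k(\sqrt[q]{b}, \sqrt[q]{b'})$ and one checks $b' \in k^\times$ can be chosen so that over $L$ the extension $F/L$ is cut out by a radical of an element of $k^\times$. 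Since also $F = L(\sqrt[q]{a})$ by hypothesis, and $F/L$ is a degree-$q$ Kummer extension, its Kummer class is a well-defined line in $L^\times/(L^\times)^q$; both $a$ and (a power of) this chosen $b' \in k^\times$ generate that line, so $a \equiv (b')^n \mod (L^\times)^q$ for some $n$ coprime to $q$, and replacing $b'$ by $(b')^n \in k^\times$ gives the claim with that as the sought $b$.

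The step I expect to be the main obstacle is the passage from ``$G^q$ acts trivially on $F$'' to ``$F/k$ is abelian of exponent $q$'' together with the descent of the Kummer class to $k^\times$. The abelianness is automatic from $|G| = q^2$, but ruling out $G \cong \Z/q^2\Z$ requires me to be careful that the hypothesis really covers \emph{all} $\gamma \in G_k$, not just $\gamma$ lifting a generator of $\Gal(L/k)$ — a cyclic group of order $q^2$ does have elements whose $q$-th power is a nontrivial generator of the order-$q$ subgroup, and I must show such an element would violate the fixing condition for some $\sigma^i(a)$. The subtlety is that $c(\gamma^q) = \prod_{j} \gamma^j(c(\gamma))$-type telescoping only sees the $\Gal(F/L)$-component of $c$ directly; to detect the failure when $\gamma$ projects to a generator of $\Gal(L/k)$ I need to use the hypothesis for the twisted radicals $\sqrt[q]{\sigma^i(a)}$, i.e. the full family indexed by $i$, exactly as in \cref{lem:technical}. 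Making that bookkeeping precise — showing the family $\{\sigma^i(a)\}$ spans enough of $L^\times/(L^\times)^q$ to force the splitting — is where the real work lies; everything after that is standard Kummer theory.
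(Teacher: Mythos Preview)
Your proposal has a genuine gap at the very first step: you write ``$G = \Gal(F/k)$; since $[F:L] = [L:k] = q$ and both are Galois over $k$\ldots'', but the hypothesis only says that each step $L/k$ and $F/L$ is Galois of degree $q$, not that $F/k$ is Galois. Your parenthetical justification --- ``using that $F/L$ is degree $q$, so all these radicals generate the same extension, i.e.\ $\sigma^i(a) \equiv a^{n_i} u_i^q$'' --- is circular: the field $L(\sqrt[q]{\sigma^i(a)})$ is a conjugate $\sigma^i(F)$ of $F$ over $k$, and the assertion that all conjugates coincide with $F$ is exactly the normality of $F/k$. Nothing about $[F:L]=q$ alone forces $\sqrt[q]{\sigma^i(a)}\in F$.

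The paper's proof addresses precisely this point before doing anything else. It passes to the Galois closure $\tilde L = L\bigl(\sqrt[q]{a},\sqrt[q]{\sigma(a)},\ldots,\sqrt[q]{\sigma^{q-1}(a)}\bigr)$ of $F$ over $k$, so that $\Gal(\tilde L/L)\cong(\Z/q\Z)^r$, and uses the hypothesis on $\gamma^q$ applied to the whole family $\{\sqrt[q]{\sigma^i(a)}\}_i$ to rule out $r>1$: if the radicals were Kummer-independent one could build $\gamma\in G_k$ whose $q$-th power moves $\sqrt[q]{a}$ nontrivially. Only once $\tilde L=F$, i.e.\ $F/k$ Galois of degree $q^2$, does the paper carry out the step you identified as central (excluding $\Gal(F/k)\cong\Z/q^2\Z$), and from there your Kummer-theory endgame matches the paper's. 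So your outline is right from ``$G\cong(\Z/q\Z)^2$'' onward; what is missing is the argument that gets you a well-defined $G$ in the first place, and that argument is not a formality --- it is where the hypothesis on the conjugate radicals $\sqrt[q]{\sigma^i(a)}$ for $i\neq 0$ is actually consumed.
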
 

\begin{proof}Assume that $a \not\in k^\times$. Fix $\sigma \in G_k$ such that $\overline{\sigma}$ generates $\Gal(L/k)$. Denote the Galois closure of the extension $F$ over $k$ by $\tilde{L}$. By Galois theory 
	$\tilde{L} = L\left( \sqrt[q]{a}, \sqrt[q]{\sigma(a)}, \ldots, \sqrt[q]{\sigma^{q-1}(a)} \right)$ and $\Gal(\tilde{L}/L)$ is isomorphic to $\left( \Z/q\Z \right)^{r}$ for $r=1$, or $r=q$. 
	We prove the lemma by contradiction. Assume that $r=q$. Let $\tau \in \Gal(\tilde{L}/L)$ be the element with $\tau \left( \sqrt[q]{a} \right) = \rho \sqrt[q]{a}$ and $\tau \left( \sigma^i \sqrt[q]{a} \right) = \sigma^i \sqrt[q]{a}$ for $1 \leq i <q$. 
	Now $\left( \sigma\tau \right) \left( \sqrt[q]{a} \right)= \left( \rho \sigma^q\right) \left( \sqrt[q]{a} \right) =\rho \left(   \sqrt[q]{a}\right)$, which is a contradiction to our assumptions. We conclude that $F/k$ is Galois of degree $q^2$. We want to show that $\Gal(F/k) = \Z/q\Z \times \Z/q\Z$.	
	Next suppose that $\Gal(F/k) = \Z/q^2 \Z$ and denote the generator by $\tau$. Then $\tau^q$ fixes $L$, as $\tau^q(\sqrt[q]{a}) =\sqrt[q]{a}$ implies that $\tau^q(a) = a$ and $L= k(a)$. Hence $\tau \in \Gal(F/L)$, which implies that $\tau$ is of order $q$, a contradiction. We conclude that $\Gal(F/k) \cong \Z/q\Z \times \Z/q\Z$. Consider the fixed field $F^{\left< \sigma \right>}$, which is a degree $q$ extension of $k$. By Kummer theory, there exists an element $b \in k^\times$ so that  $F^{\left< \sigma \right>} = k\left( \sqrt[q]{b}\right)$. Finally, $F = L \left( \sqrt[q]{b}\right) = L \left( \sqrt[q]{a}\right)$ and thus by Kummer theory $a \equiv b \mod (L^\times)^q$. 
\end{proof} 

\begin{prop}\label{prop:ImageOfRestrictionmap}
	The image of the restriction map corresponds to the set $$k^\times/\left((L^\times)^q\cap k^\times\right) \times \{1\} \subset L^\times/(L^\times)^q \times L^\times/(L^\times)^q$$ under the isomorphism $\phi^{-1}$. 
\end{prop}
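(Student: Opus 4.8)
The plan is to pin down the image of $\res\colon H^1(k,M)\to H^1(L,M)$ inside the fixed part $H^1(L,M)^{G_k/G_L}$, in which it is contained by the inflation--restriction sequence displayed above, and which by \cref{lem:fixedSet} consists exactly of the classes $\phi\bigl(a,a/\sigma(a)\bigr)$ with $\sigma(a)^2\equiv\sigma^2(a)\,a\bmod(L^\times)^q$. On such representatives I then single out which satisfy the criterion of \cref{lem:ImageRes1}, namely $f(\gamma^q)=0$ for every $\gamma\in G_k$, and I aim to show this holds precisely when $a$ is congruent mod $(L^\times)^q$ to an element of $k^\times$. Combining the two resulting inclusions will give the statement.

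First I would do the easy inclusion. Given $c\in k^\times$, the pair $(c,1)$ is fixed since $\sigma(c)=c$, and for $f=\phi(c,1)$ one reads off $f(\gamma^q)=\left[\gamma^q(\sqrt[q]{c})/\sqrt[q]{c}\right]_\rho P$. Because $\gamma$ moves $\sqrt[q]{c}$ only by a $q$-th root of unity, and such roots of unity lie in $k$ as $\rho\in k$, one gets $\gamma^q(\sqrt[q]{c})=\sqrt[q]{c}$, hence $f(\gamma^q)=0$, and \cref{lem:ImageRes1} places $\phi(c,1)$ in the image. This already shows $k^\times/\bigl((L^\times)^q\cap k^\times\bigr)\times\{1\}\subseteq\Image(\res)$.

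For the reverse inclusion, take $f=\phi\bigl(a,a/\sigma(a)\bigr)$ in the image. If $a\in(L^\times)^q$ then $\sigma(a)\in(L^\times)^q$ too, so $f=\phi(1,1)$ lies in the target set; hence assume $a\notin(L^\times)^q$, so that $F:=L(\sqrt[q]{a})$ makes $k\subset L\subset F$ a tower with both steps Galois of degree $q$ --- exactly the setup of \cref{lem:technical-lemma-Galois-theory}. From \cref{lem:ImageRes1} we have $f(\gamma^q)=0$ for all $\gamma\in G_k$; unwinding the two coordinates of $\phi$, this says $\gamma^q$ fixes $\sqrt[q]{a}$ and $\sqrt[q]{a/\sigma(a)}$. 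Since $\bigl(\sqrt[q]{a}/\sqrt[q]{a/\sigma(a)}\bigr)^q=\sigma(a)$ and any two radicals of $\sigma(a)$ differ by an element of $\mu_q\subseteq k$, it follows that $\gamma^q$ also fixes $\sqrt[q]{\sigma(a)}$; and applying the automorphisms $\sigma^{i-1}$ (which preserve $(L^\times)^q$) to the fixed-set congruence gives $\sigma^i(a)^2\equiv\sigma^{i+1}(a)\,\sigma^{i-1}(a)\bmod(L^\times)^q$, so $\sqrt[q]{\sigma^{i+1}(a)}$ equals $\sqrt[q]{\sigma^i(a)}^2/\sqrt[q]{\sigma^{i-1}(a)}$ up to a factor in $\mu_q\cdot L^\times$, hence up to a factor fixed by $\gamma^q\in G_L$. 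Induction on $i$ then yields $\gamma^q\bigl(\sqrt[q]{\sigma^i(a)}\bigr)=\sqrt[q]{\sigma^i(a)}$ for all $0\le i<q$ and all $\gamma\in G_k$, which is exactly the hypothesis of \cref{lem:technical-lemma-Galois-theory}. That lemma produces $c\in k^\times$ with $a\equiv c\bmod(L^\times)^q$, whence $a/\sigma(a)\equiv c/\sigma(c)=1$, so $f=\phi(c,1)$ and $\phi^{-1}(f)$ lies in the asserted set. This gives the reverse inclusion and hence equality.

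The step I expect to be the main obstacle is the bookkeeping in the reverse inclusion: converting the single cohomological vanishing $f(\gamma^q)=0$ into the full radical-fixing hypothesis of \cref{lem:technical-lemma-Galois-theory} requires tracking carefully the $\mu_q$-ambiguity of each $\sqrt[q]{\sigma^i(a)}$, exploiting that factors lying in $L^\times$ (in particular in $(L^\times)^q$) are fixed by $\gamma^q\in G_L$, and running the induction fed by the fixed-set congruence of \cref{lem:fixedSet}. Everything else --- the easy inclusion, the reduction to $a\notin(L^\times)^q$, and the final appeal to \cref{lem:technical-lemma-Galois-theory} --- should be essentially formal.
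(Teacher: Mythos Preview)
Your proof is correct and follows essentially the same route as the paper: reduce to the fixed set via \cref{lem:fixedSet}, translate the criterion of \cref{lem:ImageRes1} into $\gamma^q$ fixing the radicals $\sqrt[q]{\sigma^i(a)}$ (propagating from $i=0,1$ via the congruence $\sigma^i(a)^2\equiv\sigma^{i+1}(a)\,\sigma^{i-1}(a)$), and then invoke \cref{lem:technical-lemma-Galois-theory}. You supply more detail than the paper---in particular the forward inclusion and the explicit induction on $i$---but the argument is the same.
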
 

\begin{proof} 
	Let $(a,b) \in L^\times/(L^\times)^q \times L^\times/(L^\times)^q$ be in the image of the restriction map. There exists some $f \in H^1(k,M)$ so that $\phi^{-1}\circ \res(f) = (a,b)$. Then $(a,b)$ is necessarily in the preimage $\phi^{-1} \left( H^1(L,M)^{G_k/G_L} \right) $ and by \Cref{lem:fixedSet} we see that $(a,b) \equiv \left( a, \frac{a}{\sigma(a)} \right)$ with $\sigma(a)^2 \equiv \sigma^2(a) a \mod (L^\times)^q$. It remains to show that we can choose $a \in k^\times$. \\
	By definition of $\phi$ and by \Cref{lem:ImageRes1} we get that $\gamma^q\left(\sqrt[q]{a}\right) = \sqrt[q]{a}$ and $\gamma^q\left( \sqrt[q]{\frac{a}{\sigma(a)}}\right) = \sqrt[q]{\frac{a}{\sigma(a)}}$ for any $\gamma \in G_k$ and for any choice of root. Using the condition that $\sigma(a)^2 \equiv \sigma^2(a) a$, we deduce that $\gamma^p \left( \sqrt[q]{\sigma^i}{a} \right) = \sqrt[q]{\sigma^i}{a}$ for any $i$. The statement follows from \Cref{lem:technical-lemma-Galois-theory}.	
\end{proof} 

The following theorem summarizes the results of this section. 

\begin{thm}\label{thm:Br(E) three case} 
	Suppose that $[G_k:G_L]$ is of order $q$ and choose $\sigma \in G_k$ with $\sigma(Q) = P\oplus Q$ such that $\overline{\sigma}$ generates $G_k/G_L$.  Additionally denote a primitive element for the extension $L/k$ by $l$.  
	$$\tor[q][\Br(E)] = \tor[q][\Br(k)] \oplus I$$
	and every element in $I$ can be written as 
	$$\left( \left( l^q\right)^i , n_Q \right)_{k(E)} \otimes \left( a,t_P\right)_{k(E)},$$
	for some $0 \leq i < q, a \in k^\times,$ and $n_Q \in k(E)$ with $n_Q^q = N_{L(E)/k(E)}(t_Q)$.
\end{thm}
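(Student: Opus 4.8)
The plan is to combine the structural decomposition we have already established with the explicit computations from \cref{prop:ImageofInflation} and \cref{prop:ImageOfRestrictionmap}. Recall that the left-split of sequence (\ref{Eq:exact sequence on tor[d]Br(E)}) via the zero-section gives $\tor[q][\Br(E)] = \tor[q][\Br(k)] \oplus I$, where $I \cong \tor[q][\H^1(k,E(\kbar))]$, and that $\epsilon$ induces a section of $r$, so that $I = \epsilon\left(H^1(k,M)\right)$. Thus to find generators of $I$ it suffices to find a generating set for $H^1(k,M)$ as a group, apply $\epsilon$, and then discard the part coming from $\ker(\lambda) = \Image(\delta)$, which $\epsilon$ kills (this follows from the split-case argument combined with the inflation-restriction machinery of this section).

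First I would use the inflation-restriction exact sequence $0 \to H^1(G_k/G_L,M) \xrightarrow{\inflation} H^1(k,M) \xrightarrow{\res} H^1(L,M)^{G_k/G_L}$. The image of $\inflation$ is generated by $f_L$ (the lemma preceding \cref{prop:ImageofInflation}), and \cref{prop:ImageofInflation} identifies $\epsilon_k(\inflation(f_L))$ with the inverse Brauer class of $\left(l^q,n_Q\right)_{q,k(E)}$; so this symbol algebra (equivalently its inverse, which is a symbol algebra with the roles of the arguments flipped, hence still in our list up to the conventions) is one generator. Next, by \cref{prop:ImageOfRestrictionmap}, the image of $\res$ corresponds under $\phi^{-1}$ to $k^\times/\left((L^\times)^q\cap k^\times\right)\times\{1\}$, i.e. to classes $\phi(a,1)$ with $a\in k^\times$; lifting each such class back to $H^1(k,M)$ and applying $\epsilon_k$, \cref{Prop:epsilon-split-case} (applied over $L$, but these classes are defined over $k$) shows $\epsilon_k$ sends it to $(a,t_P)_{q,k(E)}$. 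Since every element of $H^1(k,M)$ differs from an inflation class by something whose restriction lands in this image, its $\epsilon$-image is a product of an $(l^q,n_Q)$-type algebra and an $(a,t_P)$-type algebra. This shows the listed set generates $I$.

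The main obstacle I expect is the bookkeeping needed to pass cleanly from ``$\res(f)$ lies in the image described by \cref{prop:ImageOfRestrictionmap}'' back to a concrete preimage in $H^1(k,M)$ whose $\epsilon_k$-image is a genuine symbol algebra $(a,t_P)_{q,k(E)}$ over $k(E)$ rather than merely over $L(E)$: one must check that the $\phi$-description over $k$ is compatible with the one over $L$ under restriction, and that the formula of \cref{Prop:epsilon-split-case} still applies verbatim because $P$, $Q$, $t_P$, $t_Q$, $\alpha_P$, $\alpha_Q$ can all be taken with the same meaning in the tower. This amounts to unwinding the commutative diagram relating $\epsilon_k$, $\res$, and $\epsilon_L$ (which commutes by \cite[Ch.\ 1, Proposition 1.5.3 (iii)]{Neukirch-Schmidt-Wingberg-Cohomology-of-number-fields}) together with the fact that the cup-product commutes with restriction; modulo that, the statement is essentially a repackaging of the two preceding propositions.

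Finally, I would note that the decomposition $\tor[q][\Br(E)]=\tor[q][\Br(k)]\oplus I$ is already in hand from the left-split and from the fact (proved just above in this section) that $\epsilon$ induces a section of $r$; so the only new content of the theorem is the generating set, which the above three steps supply. I would not attempt to give relations here — those are precisely the subtler point flagged as requiring the image of $\delta_k$, and they are handled by the algorithm's $\mathcal{R}_{L'}$ construction together with \cref{rem:image of restriction}.
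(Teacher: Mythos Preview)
Your proposal is correct and takes essentially the same approach as the paper: the theorem is presented there explicitly as a summary of the section's results, and your argument assembles precisely the same ingredients---the inflation--restriction sequence, the identification of $\epsilon_k(\inflation(f_L))$ from \cref{prop:ImageofInflation}, and the description of the image of restriction from \cref{prop:ImageOfRestrictionmap}---in the same way. Your flagged ``bookkeeping'' concern about lifting classes from $H^1(L,M)$ back to $H^1(k,M)$ and computing $\epsilon_k$ over $k(E)$ rather than $L(E)$ is legitimate but is handled implicitly by \cref{lem:ImageRes1} (which gives an explicit lift) together with the fact that $P$ and hence $t_P$ are $k$-rational, so the cocycle computation of \cref{Prop:epsilon-split-case} goes through verbatim for classes of the form $\phi(a,1)$ with $a\in k^\times$.
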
 
For the relations consider the commutative diagram with exact rows and columns 
$$\xymatrix{
	& & 0 \ar[d] & 0 \ar[d] \\
	0 \ar[r] & \frac{E(k) \cap [q]E(L)}{[q]E(k)} \ar[r]^{\inflation} \ar[d]^{\delta_{L/k}} & E(k)/[q]E(k) \ar[r]^{\res}\ar[d]^{\delta_k} & E(L)/[q]E(L) \ar[d]^{\delta_L}   \\
	0 \ar[r]& H^1(\Gal(L/k),M) \ar[r]^{\inflation} & H^1(k,M) \ar[r]^{\res} & H^1(L,M) 
},$$
where $\delta_{L/k}$ is the map induced by $\delta_k$. It is immediate that $\delta_{L/k}$ is injective. Recall that $H^1\left( \Gal(L/k),M\right)$ is cyclic of order $q$ with generator $f_L$. Furthermore, we saw that $\epsilon_L\left( \inflation(f_L) \right) = \left( l^q, n_Q \right)_{k(E)}$ (\Cref{prop:ImageofInflation}). We deduce the following results. 

\begin{enumerate} 
	\item The Brauer class of $\left( l^q, n_Q \right)_{k(E)} $ is trivial, that is it is in the image of the map $\Br(k) \rightarrow \Br(E)$, if and only if the quotient 
	$\frac{E(k) \cap [q]E(L)}{[q]E(k)} $ is non-trivial. 
	\item 	The Brauer class of $\left( a, t_P \right)_{k(E)}$ is trivial if and only if there is some $R \in E(k)/[q]E(k)$ so that $\phi^{-1}(a,1) = \delta_L (R)$. 
\end{enumerate}

\subsection{$q$ divides the degree $[L:k]$}\label{sec:Generators:qdividesLk} 

Suppose for this section that $q$ divides $[L:k]$. Let $k \subset L' \subset L$ be an intermediate field so that $L/L'$ is a Galois extension of degree $q$ and $q$ does not divide the degree $L'/k$. After renaming $P$ and $Q$ we may assume that there is some generator $\overline{\sigma}$ of $\Gal(L/L')$ so that $\sigma(P) = P$ and $\sigma(Q) = P\oplus Q$. Furthermore, let $l \in L$ with $l^q \in L'$ and $L = L'(l)$.
Fix $t_P, t_Q \in L(E)$ with $\divisor(t_P) = q(P) - q(0)$ and $\divisor(t_Q) = q(Q) - q(0)$. Assume additionally that $t_P \circ [q], t_Q \circ[q] \in \left( L(E)^\times \right)^d.$  Furthermore, let $\alpha_Q\in \overline{k(E)}$ so that $\alpha_Q^q = t_Q$. \\

Although the field extension $L'/k$ might not be Galois, restriction followed by corestriction coincides with multiplication by $[L':k]$, which is an isomorphism. Using the previous section we deduce the following result. 

\begin{prop} 
	Under the above assumptions, the Brauer group decomposes as
	$$\tor[q][\Br(E)] =  \tor[q][\Br(k)] \oplus I$$
	and every element in $I$ can be expressed as
	$$\Cor\left( \left( l^q\right)^i, n_Q \right)_{L'
		(E)} \otimes  \Cor(a,t_P)_{L'(E)}$$
	for some $0 \leq i < q$ and $a \in L'^\times$.
\end{prop}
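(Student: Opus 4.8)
The plan is to reduce the statement for the tower $k \subset L' \subset L$ to the two cases already established, using the standard fact that for the (possibly non-Galois) extension $L'/k$ of degree prime to $q$, the composite $\Cor_{L'/k} \circ \res_{L'/k}$ on $q$-torsion cohomology is multiplication by $[L':k]$, hence an isomorphism on $\tor[q][\Br(E)]/\tor[q][\Br(k)]$. First I would set up the commutative diagram relating $\delta_{L'}, \epsilon_{L'}$ over $L'$ with $\delta_k, \epsilon_k$ over $k$ via $\res$ and $\Cor$, exactly as in the diagram preceding \cref{thm:Br(E) three case} but with $k$ replaced by $L'$ at the bottom and the outer columns given by restriction to $L'$ and corestriction back to $k$; the key input is \cite[Ch. 1, Proposition 1.5.3 and Corollary 1.5.7]{Neukirch-Schmidt-Wingberg-Cohomology-of-number-fields} together with the compatibility of $\epsilon$ with $\res$ and $\Cor$ established earlier in this section.

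The second step is to observe that $L/L'$ is a Galois extension of degree exactly $q$ with $\Gal(L/L')$ generated by $\overline\sigma$ satisfying $\sigma(P)=P$, $\sigma(Q)=P\oplus Q$ — that is, precisely the hypotheses of \cref{sec:Generators-div-q}. Therefore \cref{thm:Br(E) three case}, applied with base field $L'$ in place of $k$ (and $L$ in place of $L$), yields the decomposition $\tor[q][\Br(E \times_k \Spec L')] = \tor[q][\Br(L')] \oplus I'$ where $I'$ is generated by $\left( l^q, n_Q \right)_{L'(E)}$ and the symbols $(a,t_P)_{L'(E)}$ with $a \in L'^\times$, using that $l^q \in L'$, $L = L'(l)$, and $n_Q \in L'(E)$ with $n_Q^q = N_{L(E)/L'(E)}(t_Q)$. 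Here I should note that \cref{thm:Br(E) three case} is stated over an arbitrary base field satisfying the running hypotheses of \cref{Mnotkrational}, and $L'$ contains $\rho$ and has characteristic prime to $q$, so it applies verbatim.

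The third step is to push this through corestriction. Since $[L':k]$ is prime to $q$, the composite $\Cor_{L'/k}\circ\res_{L'/k}$ is an isomorphism on $\tor[q][\Br(E)]/\tor[q][\Br(k)]$, so $\Cor_{L'/k}: \tor[q][\Br(E \times_k \Spec L')] \to \tor[q][\Br(E)]$ is surjective onto the complement $I$; composing a set of generators of $I'$ with $\Cor_{L'/k}$ therefore gives a generating set of $I$, namely $\left\{ \Cor\left(l^q, n_Q\right)_{L'(E)}, \Cor(a,t_P)_{L'(E)} : a \in L'^\times \right\}$, as claimed. The decomposition $\tor[q][\Br(E)] = \tor[q][\Br(k)] \oplus I$ itself is just the general splitting from \cref{Eq:exact sequence on tor[d]Br(E)} together with $s^*$, which holds unconditionally.

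I expect the main obstacle to be bookkeeping rather than any genuine difficulty: one must be careful that \cref{thm:Br(E) three case} really does apply over $L'$ (checking the root-of-unity and characteristic hypotheses, and that $\Gal(L/L')$ is cyclic of order $q$ with the stated action on $P,Q$ — this is arranged by the choice of $L'$ and the renaming of generators), and that the surjectivity of $\Cor$ transports generating sets correctly. A secondary subtlety is that $L'/k$ need not be Galois, so one cannot invoke inflation–restriction directly at this level; the argument genuinely relies on the norm/transfer formalism $\Cor\circ\res = [L':k]$ valid for arbitrary finite separable extensions, which is why the degree being prime to $q$ is essential. As the remark after the algorithm already flags, this generating set is typically not minimal because $\Cor$ is not injective, but that does not affect the present statement.
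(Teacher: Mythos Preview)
Your proposal is correct and follows essentially the same route as the paper: apply \cref{thm:Br(E) three case} over the intermediate field $L'$ (where the degree-$q$ hypothesis is satisfied), then use that $\Cor_{L'/k}\circ\res_{L'/k}=[L':k]$ is an isomorphism on $q$-torsion to transport the resulting generating set down to $k$. The paper's argument is in fact just the one-line observation preceding the proposition, so your write-up is if anything more detailed than what appears there.
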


Suppose that $q$ divides $[L:k]$ and use the notation used in \Cref{sec:Generators:qdividesLk}. Recall that every element in $I$ can be written as $\Cor(A)$ for some $A \in \tor[q][\Br(E\times \Spec(L'))]$. Such an element is trivial if and only if it is similar to $\epsilon_{L'} \circ \delta_{L'}$. 
Remark that some corestrictions of element in $\tor[q][\Br(E_L)]$ may coincide and we do not account for this in our description. 

\section{Examples}\label{ch:Examples} 

In this section, we calculate the $q$-torsion of the Brauer group for some elliptic curves $E$, where $q$ is an odd prime. For computational reasons, we only consider the case $q=3$. As before, we will study various cases depending on the extension $L$, that is the smallest Galois extension of $k$, so that $M$ is $L$-rational. 

\subsection{$M$ is $k$-rational over a number field}\label{sec:Ex:split}
Let $k = \Q(\omega)\subset \mathbb{C}$, where $\omega = -\frac12 +\frac{\sqrt{3}}{2} i$ is a primitive third root of unity. In \cite{Paladino2010} the author describes a family of elliptic curves such that $M$ is $\mathbb{Q}(\omega)$-rational, for example $E$ given by the affine equation $y^2 = x^3 + 16$. In this case, the three torsion of $E$ is generated by 
$P= (0,4)$ and $Q= (-4, 8 \omega + 4)=(-4,4\sqrt{3}i)$. Furthermore, the tangent lines at $P$ and $Q$, respectively, are given by $t_P = y-4$ and $$t_Q= y - \frac{6}{2 \omega + 1 } (x+4) - 8 \omega - 4 = y + 2 \sqrt{3}i x +4 \sqrt{3} i.$$ 
By the previous discussion $\tor[3][\Br(E)] = \tor[3][\Br(k)] \oplus I$ and every element in $I$ can be written as a tensor product 
\begin{equation}\label{eq:Ex1}
(a, y-4)_{3,k(E)} \otimes\left(b,  y + 2 \sqrt{3}i x + 4 \sqrt{3} i\right)_{3,k(E)}
\end{equation} for some $a,b \in k^\times.$
We calculate with magma, that $E(k) = M$ and therefore also $E(k)/3E(k) = M$. We now compute directly that \begin{align*} 
t_Q(P) &=  4 + 4\sqrt{3} i & \frac{ t_P(P\oplus Q)}{t_P(Q)} &= \frac{4 \sqrt{3} i - 4}{ 4\sqrt{3} i - 4} = 1\\
\frac{t_Q(P\oplus Q)}{t_Q(Q)} &= \frac{12 \sqrt{3} i + 12}{4+4\sqrt{3} i} = 3 & t_P(Q) &= 4\sqrt{3} i - 4 
\end{align*} 
Using the algorithm, we see that a tensor product as in (\ref{eq:Ex1}) is trivial if and only if $(a,b) \in k^\times/(k^\times)^3 \times k^\times/(k^\times)^3$ is in the subgroup generated by $\left( 4+ 4 \sqrt{3} i, 1\right) $ and $\left( 3, 4\sqrt{3}i-4 \right)$.

\subsection{Degree $L/k$ coprime to $q$ for $k$ a number field}\label{subsec:Ex degree L/k coprime to 3} 

Let $k = \mathbb{Q}(\omega)$ and $E$ the elliptic curve given by the affine equation 
$$y^2 = x^3 + B,$$
where $B \equiv 2 \mod (\mathbb{Q}^\times )^3$ and $B \not\equiv 1,-3 \mod (\mathbb{Q}^\times)^2$. By \cite[Theorem 3.2 and Corollary 3.3]{Bandini2012} we see that $L = k(\sqrt{B})$. Let $\sigma$ be given by $\sigma(\sqrt{B}) = - \sqrt{B}$. The three torsion of $E$ has generators $P$ and $Q$ with $P = (0,\sqrt{B})$ and $Q =  \left(\sqrt[3]{-4B},\sqrt{-3B}\right),$ so that $\sigma(P) = 2P$ and $\sigma(Q) = 2Q$. We need to calculate 
$$\Cor_{L(E)/k(E)}\left((a,t_P)_{3,L(E)} \otimes (b,t_Q)_{3,L(E)}\right).$$ 
Recall that by \Cref{rem:image of restriction} it will be enough to consider $(a,b)$ a norm in $\frac{L^\times}{(L^\times)^3} \times \frac{L^\times}{(L^\times)^3}$. For $(a,b) \in \frac{L^\times}{(L^\times)^3} \times \frac{L^\times}{(L^\times)^3}$ we have $N_{L/k}(a,b) = \left(\frac{a}{\sigma(a)}, \frac{b}{\sigma(b)}\right)$. Therefore we may assume that $N_{L/k}(a) = 1$ and $N_{L/k}(b) = 1$. We compute $$t_P = y - \sqrt{B}$$ and $$t_Q = y - \sqrt{-3B} - \frac{ 3 \left( \sqrt[3]{-4B}\right)^2 }{2 \sqrt{-3B}} \left( x - \sqrt[3]{-4B}\right) = 
 y + \sqrt{-3B} + \frac{ \sqrt[3]{2B^2} \sqrt{-3B}}{B} x.$$  

Let $a = a_1\sqrt{B} + a_2$ with $N_{L/k}(a) = 1$. Remark that if $a_1 = 0$, then $1= N_{L/k}(a) = a_2^2$ so that $\left(a,t_P\right)_{3,L(E)}$ is trivial. If $a\neq 0$, we use the algorithm given in \cite[Section 3]{Rosset1983} to calculate the corestriction explicitly as
\begin{align*} 
\Cor(a,t_P)_{3,L(E)} &= \left( a_2 + a_1y, a_1^2\right)_{3,k(E)}.
\end{align*}
Finally, let $b= b_1 \sqrt{B} + b_2$ with $N_{L/k}(b) = 1$. Similarly as before $\left(b,t_Q\right)_{3,L(E)}$ is trivial if $b_1 =0$, thus we may assume that $b_1 \neq0$. Then we compute that 
\begin{align*} 
\Cor(b, t_Q)_{3,L(E)} &= \left( b_2 - \frac{ y b_1 B}{\sqrt{-3} \left( B + \sqrt[3]{2B^2} x\right)} , 1- b_2^2 - \frac{ b_1^2 B^2 \left( x^3+B \right) }{3 \left( B + \sqrt[3]{2B^2} x\right)^2 } \right)_{3,k(E)}.
\end{align*} 
Overall, the three torsion of the Brauer group decomposes as
	$$\tor[3][\Br(E)] = \tor[3][\Br(k)]\oplus I$$
	and every element in $I$ can be written as a tensor product of at most two symbol algebras of the form 
	\begin{enumerate} 
		\item $\left( a_2 + a_1y, a_1^2\right)_{3,k(E)}$ for some $a_1, a_2 \in k$ with $a_1 \neq 0$ and $a_2^2 - B a_1^2 = 1$, and 
		\item $ \left( b_2 - \frac{ y b_1 B}{\sqrt{-3} \left( B + \sqrt[3]{2B^2} x\right)} , 1- b_2^2 - \frac{ b_1^2 B^2 \left( x^3+B \right) }{3 \left( B + \sqrt[3]{2B^2} x\right)^2 } \right)_{3,k(E)}$ for some $b_1, b_2 \in k$ with $b_1 \neq 0$ and $b_2^2 - B b_1^2 = 1$.
	\end{enumerate} 

To calculate the relations we need to specify $B$. Consider the case $B= -1024$. Using magma we calculate that $E(k) = 0$. Thus there are no additional relations. Note that some elements might still become trivial due to the fact that the corestriction is not injective. 

\subsection{Degree $L/k= q$ for $k$ a number field}\label{sec:Ex:degreeq}
Let $k = \Q(\omega)$, $\omega = -\frac{1}{2} + \frac{i\sqrt{3}}{2}$ and let $E$ be the elliptic curve given by the affine equation 
$$y^2 = x^3+4.$$
Generators of the three torsion are given by $P= (0, 2)$ and $Q=(-2\sqrt[3]{2} , 2i\sqrt{3})$. Let $l = \sqrt[3]{2}$. In our previous notation $L= k\left(\sqrt[3]{2}\right)$ and the Galois group $\Gal(L/k)$ is generated by $\overline{\sigma}$ with $\sigma(Q) = P+Q = (-\omega 2 \sqrt[3]{2}, 2i \sqrt{3})$. It can be seen that 
\begin{align*}
t_P & = y - 2 \\
t_Q & = y + i \sqrt{3} \sqrt[3]{4}x + 2 i \sqrt{3}\\
n_Q &= y - 2 \sqrt{3}i. 
\end{align*}
Therefore the $3$-torsion of $\Br(E)$ is $$\tor[3][\Br(E)] = \tor[3][\Br(k)] \oplus I$$ and every element in $I$ can be written as a tensor product 
	$$\left( 2^i, y - 2 \sqrt{3}i \right)_{3,k(E)} \otimes (a,y-2)_{3,k(E)}$$ for some $a \in k^\times$ and $0 \leq i <3$. 
We calculate that $E(k) = \left<P\right>$ and $E(L) \cong \Z/6\Z \times \Z/6\Z$. Therefore 
$E(k)/3E(k) = \left< P\right>$ and $E(L)/3E(L) = M$ and the quotient $\frac{E(k) \cap [3] E(L)}{[3]E(k)}$ is trivial. Therefore the symbol algebra 
$\left(  2, y - 2 \sqrt{3}i \right)_{k(E)}$ is not trivial. Finally, 
$$\delta\circ \res(P) = \left( 2 + 2i \sqrt{3}, 1\right) \qquad \delta \circ \res(2P) = \left( -2 + 2i\sqrt{3},1\right)$$ and therefore a symbol algebra $(a,y-2)_{k(E)}$ is trivial if and only if $$a \equiv 1, 2+2i\sqrt{3}, \text{ or }-2+2i\sqrt{3} \mod (L^{\times})^3.$$

\subsection{Over a local field}\label{sec:local} 

Denote by $\Q_7$ the $7$-adic numbers. Note that since three divides $7-1$, the field $\Q_7$ contains a primitive third root of unity $\omega$. Let $E$ be the elliptic curve 
$$E: y^2 = x^3 + 16$$ over $k$. Consider the reduction $\tilde{E}$ of $E$ modulo $7$. Then $\tilde{E}$ is a non-singular curve and using magma we see that 
$$\tilde{E}\left(\mathbb{F}_7\right) = \Z/3\Z \oplus \Z/3\Z = \left\{ 
0, (0,3), (0,4), (3,1), (3,6), (5,1),(5,6), (6,1), (6,6)
\right\}. $$
Denote by $\hat{E}$ the formal group associated to $E$ and consider the group $\hat{E}\left(7\Z_7\right)$. By \cite[IV Theorem 6.4]{silverman} there is an isomorphism 
$\hat{E} \left( 7 \Z_7 \right) \rightarrow \hat{\mathbb{G}}_a \left( 7 \Z_7 \right)$, where ${\mathbb{G}_a}$ denotes the additive group. By \cite[IV.3 and VII.2]{silverman} there is an exact sequence 
$$\xymatrix{ 0 \ar[r]&\hat{\mathbb{G}}_a\left(7\Z_7\right) \ar[r]& E(\Q_7) \ar[r]&  \tilde{E}\left(\mathbb{F}_7\right) \ar[r]& 0}.$$
Furthermore, by \cite[VII.3 Proposition 3.1]{silverman} the reduction map $\tor[3][E(\Q_7)] \rightarrow \tilde{E}\left(\mathbb{F}_7\right)$ is injective. Thus $E$ has $k$-rational $3$-torsion. Since $3$ is a unit in $\Z_7$, we further deduce that $E(\Q_7)/[3]E(\Q_7) = \tilde{E}(\mathbb{F}_7)/[3]\tilde{E}(\mathbb{F}_7) =M.$ Finally, 
$$\Q_7^\times / \left( \Q_7^\times \right)^3 \cong \left( \Z_7^\times \times \left(7\Z_7\right) \right)/ \left( \Z_7^\times \times \left(7\Z_7\right) \right)^3 \cong \Z/3\Z \times \Z/3\Z.$$
Therefore, $H^1(k,M) \cong \left( \Q_7^\times / \left( \Q_7^\times \right)^3  \right)^2 \cong \left( \Z/3\Z \right)^4$. 
By the algorithm and using \cite[Corollaire 2.3]{GroupedeBrauerIII}, the three torsion of the Brauer group decomposes as follows 
$$\tor[3][\Br(E)] = \tor[3][\Br\left(\Q_7\right)] \oplus \left( \Z/3\Z  \right)^2 = \tor[3][\left( \Q/\Z \right)] \oplus \left( \Z/3\Z \right)^2 
= \left( \Z/3\Z \right)^3.
$$ 

\begin{rem}
	The above computations also show that $\tor[3][\Br\left( \tilde{E}\right)] = \tor[3][\Br\left( \mathbb{F}_7 \right)] = 0$. 	
\end{rem}

\bibliographystyle{abbrv}
\bibliography{bibl}

\end{document}